\let\normalrender\PdfRender@NormalColorHook
\let\PdfRender@NormalColorHook\@empty
\g@addto@macro\normalsize{%
  \setlength\abovedisplayskip{7pt}
  \setlength\belowdisplayskip{7pt}
  \setlength\abovedisplayshortskip{7pt}
  \setlength\belowdisplayshortskip{7pt}
}
\setlist{nolistsep} 	
\titlespacing*{\section}{0pt}{3.5ex plus 0ex minus 0ex}{1.5ex plus 0ex}
\titlespacing*{\subsection}{0pt}{3.5ex plus 0ex minus 0ex}{1.5ex plus 0ex}
\titlespacing*{\subsubsection}{0pt}{3.5ex plus 0ex minus 0ex}{1.5ex plus 0ex}
\renewenvironment{abstract}{
\begin{center}
{\bfseries \large\abstractname\vspace{\z@}}
\end{center}
\quotation
}
\newtheoremstyle{plain}{3mm}{3mm}{\slshape}{}{\bfseries}{.}{.5em}{}
\newtheoremstyle{claim}{3mm}{3mm}{}{}{\itshape}{.}{.5em}{}
\newtheoremstyle{definition}{2mm}{2mm}{}{}{\bfseries}{.}{.5em}{}
\theoremstyle{plain}
\newtheorem{Theorem}{Theorem}
\newtheorem{Lemma}[Theorem]{Lemma}
\newtheorem{lemma}[Theorem]{Lemma}
\newtheorem{Proposition}[Theorem]{Proposition}
\theoremstyle{claim}
\theoremstyle{definition}
\newtheorem{Definition}[Theorem]{Definition}
\newtheorem{example}[Theorem]{Example}
\theoremstyle{plain}
\newcounter{MainTheoremCounter}
\theoremstyle{plain}
\newtheorem*{namedthm}{\namedthmname}
\newcounter{namedthm}
\numberwithin{equation}{section}
\newcommand{\Erdos}{Erd\H{o}s}
\newcommand{\Folner}{F\o{}lner}
\newcommand{\Szemeredi}{Szemer\'{e}di}
\newcommand{\Turan}{Tur{\'a}n}
\newcommand{\N}{\mathbb{N}}
\newcommand{\Z}{\mathbb{Z}}
\newcommand{\R}{\mathbb{R}}
\newcommand{\C}{\mathbb{C}}
\newcommand{\T}{\mathbb{T}}
\newcommand{\dens}{\mathsf{d}}
\newcommand{\Id}{{\rm{Id}}}
\newcommand{\define}[1]{{\itshape #1}}
\renewcommand{\epsilon}{\varepsilon}
\renewcommand{\leq}{\leqslant}
\renewcommand{\geq}{\geqslant}
\newcommand{\E}{\mathbb{E}}
\newcommand{\one}{\boldsymbol{1}}
\newcommand{\gen}{\mathsf{gen}}
\renewcommand{\d}{\,\mathsf{d}}
\newcommand{\intd}{\,\mathsf{d}}
\newcommand{\M}{\mathcal{M}}
\newcommand{\TT}{T_\text{\!\scalebox{0.7}{$\Delta$}}}
\definecolor{ggreen}{RGB}{0,200,0}
\definecolor{rred}{RGB}{150,0,70}
\definecolor{yyellow}{RGB}{250,210,0}
\author{By~~{\scshape Bryna Kra}~~and~~{\scshape Joel~Moreira}~~and~~{\scshape Florian~K.~Richter}\\~~and~~{\scshape Donald Robertson}}
\date{\small \today}
\title{\textbf{The Density Finite Sums Theorem}}
\begin{document}

\maketitle
\begin{abstract}
\noindent For any set $A$ of natural numbers with positive upper Banach density and any $k\geq 1$, we show the existence of an infinite set $B\subset\N$ and a shift $t\geq0$ such that  $A-t$ contains all sums of $m$ distinct elements from $B$ for all $m\in\{1,\ldots,k\}$.
This can be viewed as a density analog of Hindman's finite sums theorem.
Our proof reveals the natural relationships  among infinite sumsets, the dynamics underpinning arithmetic progressions, and homogeneous spaces of nilpotent Lie groups.
\end{abstract}
\tableofcontents

\section{Introduction}
\label{sec_intro}

\subsection{Infinite sumsets}
In 1927, van der Waerden~\cite{vdw} proved a classic result in Ramsey theory: no matter how one partitions the natural numbers
$\N=\{1,2,3,\ldots\}$ into finitely many pieces, some piece contains arbitrarily long arithmetic progressions.
Settling a long standing conjecture of \Erdos{} and \Turan, \Szemeredi{}~\cite{szemeredi} proved in 1975 a density version of van der Waerden's theorem.
Namely, he showed that any set $A \subset \N$ whose
\define{upper Banach density}, meaning
\[
\dens^*(A) \coloneqq \lim_{N \to \infty} \sup \left\{ \dfrac{|A \cap \{M+1,\dots,M+N\}|}{N} : M \in \N \right\},
\]
is positive contains arbitrarily long arithmetic progressions.

Around the same time, in a major breakthrough, Hindman \cite{Hindman-1974} settled a conjecture of Graham and Rothschild showing that no matter how one partitions the natural numbers into finitely many pieces, some piece always contains an IP-set (also called a finite sums set)
\begin{align}
\label{eqn_IP-set}
\big\{ b(i_1) + \dots + b(i_m) : m \in \N, i_1 < \cdots < i_m \in \N \big\}
\end{align}
for some strictly increasing sequence $b\colon \N\to\N$.
\Erdos{}~\cite{erdos-1975} then tried to find the underlying behavior connecting these two results, writing ``I have tried to formulate a conjecture which would be in the same relation to Hindman's theorem as \Szemeredi{}'s theorem is to van der Waerden's.''
We believe that our main theorem comes as close as is  possible to giving a density version.

\begin{Theorem}
\label{thm_density_finite_sums_theorem}
If $A \subset \N$ has positive upper Banach density then for every $k \in \N$ there is a strictly increasing sequence $b\colon \N\to\N$ and an integer $t \ge 0$ such that
\begin{equation}
\label{eqn:tru hindman explicit}
\big\{ b(i_1) + \dots + b(i_m) : 1 \le m \le k,~ i_1 < \cdots < i_m \in \N \big\}
\end{equation}
is a subset of $A-t$.
\end{Theorem}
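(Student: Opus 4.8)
The plan is to translate the statement into ergodic theory via a Furstenberg-type correspondence principle and then build the sequence $b$ greedily, with the choices at each stage dictated by nilpotent dynamics. From $A$ one obtains an ergodic measure-preserving system $\xbmt$, a set $E\in\mathcal{B}$ with $\mu(E)=\dens^*(A)>0$, and a point $x_{0}\in X$ with $A=\{n\in\N:T^{n}x_{0}\in E\}$; the correspondence is taken relative to a fixed \Folner{} sequence along which the density of $A$ is realized. Writing $\sigma(S):=\sum_{i\in S}b(i)$ for finite $S\subset\N$, the theorem becomes the assertion that there are an increasing $b\colon\N\to\N$ and an integer $t\geq 0$ with $T^{t+\sigma(S)}x_{0}\in E$ for every $S$ satisfying $1\leq|S|\leq k$. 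I would fix $t$ at the outset, reading it off the ``structured'' factors of the system --- already the periodic factor forces $t$ into a definite residue class and confines $b$ to a single class --- so that afterward the task is to run a construction inside the remaining dynamics. (In a weakly mixing system the periodic constraints are vacuous and $t=0$ works.)

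The core of the argument is the greedy construction of $b$. Suppose $b(1)<\dots<b(n)$ and $t$ have been chosen with $t+\sigma(S)\in A$ for all $S\subset\{1,\dots,n\}$ with $1\leq|S|\leq k$. Choosing the next term amounts to finding $b(n+1)>b(n)$ inside $\bigcap\{A-(t+\sigma(S')):S'\subset\{1,\dots,n\},\,0\leq|S'|\leq k-1\}$, and by correspondence it suffices that this finite intersection of shifts of $A$ have positive density along the \Folner{} sequence, equivalently that the corresponding finite intersection of shifts of $E$ have positive measure. The subtlety is that this has to succeed at \emph{every} stage: if earlier choices have let the shift-set $\{t+\sigma(S')\}$ ``spread out'' relative to some factor, the intersection collapses. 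So the $b(i)$ must be chosen to remain \emph{rigid} on the Host--Kra tower of pro-nilfactors $\mathcal{Z}_{1}\subset\mathcal{Z}_{2}\subset\cdots$, while being spread out (quasi-independent) in the directions relative to $\mathcal{Z}_{k-1}$, so that the structured part of the intersection does not disappear and the remaining part is positive by approximate independence.

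The reason $\mathcal{Z}_{k-1}$ is the decisive factor is the key structural observation linking sumsets to arithmetic progressions: when an increment $b(i)$ acts nearly trivially on a nilfactor, the subset-sum $\sigma(S)$ acts nearly like the $|S|$-th power of a fixed translation, so the patterns $\{t,\,t+\sigma(S')\}$ controlling the greedy step behave like arithmetic-progression patterns $\{0,n,2n,\dots,(k-1)n\}$, whose positivity is governed precisely by $\mathcal{Z}_{k-1}$ (Host--Kra) and, at the base, by \Szemeredi{}'s theorem. Concretely, after projecting onto $\mathcal{Z}_{k-1}$ one works on a homogeneous space $G/\Gamma$ of a nilpotent Lie group, on which $\one_{E}$ is replaced by a nonnegative observable of positive integral, and one must show that the relevant products of translates of this observable integrate to something positive --- an equidistribution statement on sub-nilmanifolds.

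The main obstacle, I expect, is producing increments $b(i)$ that are simultaneously \emph{rigid at every level} of the (infinite) pro-nilfactor tower while staying distinct and increasing and remaining spread out in the complementary directions. This seems to require a recurrence statement for nilsystems along a carefully chosen idempotent ultrafilter --- idempotency being exactly what lets the infinitely many stage-constraints accumulate coherently --- together with the nilmanifold equidistribution input above. For $k=1$ the statement is immediate; for $k=2$ only the Kronecker factor $\mathcal{Z}_{1}$ is involved, which is the \Erdos{} sumset conjecture; so the genuinely new difficulty is the higher-step case $k\geq 3$, where the combinatorics of the sumset must be reconciled with $(k-1)$-step nilpotent geometry.
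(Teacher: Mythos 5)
There is a genuine gap, and it sits exactly at the centre of your plan: the greedy step. Your inductive hypothesis is that $t+\sigma(S)\in A$ for all subset sums with at most $k$ terms chosen so far, and you then need $b(n+1)$ in $\bigcap_{S'}(A-t-\sigma(S'))$ over all $S'$ with $|S'|\le k-1$. But knowing each individual translate contains the relevant points gives no control whatsoever on the density, or even the infinitude, of this intersection of translates of $A$ by the previously chosen (and now frozen) shifts; positive upper Banach density of $A$ does not survive such intersections, and nothing in the inductive hypothesis propagates positivity to the next stage. You acknowledge this and propose to repair it by choosing increments that are simultaneously rigid on the entire Host--Kra tower and quasi-independent relative to $Z_{k-1}$, invoking a recurrence theorem for nilsystems along an idempotent ultrafilter --- but neither of these ingredients is formulated, let alone proved, and this is precisely where the whole difficulty of the theorem lives. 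Worse, an accumulation scheme of the kind you describe, if it worked without using the truncation $m\le k$ in some essential quantitative way, would produce a shifted IP set inside $A$, which is false by Straus's example; your sketch never explains where the truncation enters, so the proposal essentially restates the obstacle rather than overcoming it. (Also, $t$ cannot in general be read off the periodic factor at the outset; in the actual argument it is produced by a multiple recurrence statement, not by congruence considerations.)

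For contrast, the paper avoids any density-increment or greedy-with-density argument. It reduces the theorem to producing an \emph{\Erdos{} progression} $(a,x_1,\dots,x_k)$ with all $x_i\in T^{-t}E$, i.e.\ fixed limit points with $T^{c(n)}a\to x_1$, $T^{c(n)}x_{j-1}\to x_j$; once these anchors exist, the nested open-set construction of \cref{lem_erdos-progressions_give_sumsets} selects the $b(i)$, and it succeeds at every stage because each open set arising in the induction contains one of the fixed points $x_j$ --- this is what replaces your unproved ``positive density at every stage''. The existence of the progression is in turn obtained by constructing a \emph{progressive} measure $\sigma$ on $X^{k+1}$ from the $(k-1)$-step pronilfactor (\cref{def_tausigma}): the shift $t$ comes from Furstenberg's multiple recurrence theorem applied through the Furstenberg joining (\cref{thm_sigma_sz_thing}), and progressiveness is proved via Host--Kra seminorm estimates together with a new extension of multiple recurrence to product sets under measures invariant under $T\times T^2\times\cdots\times T^k$ (\cref{thm_weirdgeneralSzemeredi}); no ultrafilters appear. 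Your instinct that $Z_{k-1}$ and arithmetic-progression-type patterns govern the problem agrees with the paper, but as written your argument does not close its central inductive step.
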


The shift is in general necessary: for example, the odd numbers do not contain a configuration of the form~\eqref{eqn:tru hindman explicit} when $k \ge 2$.
Furthermore, an example of Straus (published by others, see for example~\cite{Hindman-79}) exhibits a set with density arbitrarily close to 1 that contains no shift of an IP-set, explaining the need to curtail $m$.

The $k=2$ case of \cref{thm_density_finite_sums_theorem} was proved in our previous work~\cite{KMRR2}.
The methods used there do not generalize to the setting of \cref{thm_density_finite_sums_theorem}, which was conjectured in~\cite[Conjecture 1.5]{KMRR2} and~\cite[Conjecture 2.4]{KMRR3}.
Apart from the initial translation of \cref{thm_density_finite_sums_theorem} into a dynamical statement (\cref{thm main dynamic intro}), our proof of \cref{thm_density_finite_sums_theorem} is new even in the case $k=2$ and gives, in particular, a simpler proof of the main result in~\cite{KMRR2}.

\cref{thm_density_finite_sums_theorem} follows a long history of results on infinite sumset configurations in large subsets of the integers, initiated by various conjectures of \Erdos{} from the late 1970s and early 1980s (see~\cite[p.~85]{EG80}, \cite[p.~305]{erdos-1975}, \cite[pp.~57--58]{erdos-1977}, and~\cite[p.~105]{erdos-1980}).
Hindman~\cite{Hindman-79, Hindman-82} gave various examples and provided reformulations and refinements of some of these conjectures.
Early results include the existence~\cite{nathanson} of sumsets $B+C$ with $B \subset \N$ infinite and $C \subset \N$ finite but arbitrarily large, and the existence~\cite{bergelson-85} of restricted sumset configurations $\{b+c: b\in B,~c\in C,~b<c\}$ where $B$ and $C$ are infinite.
In~\cite{DGJLLM}, it was proved that any set of density strictly greater than $1/2$ contains $B+C$ with both $B$ and $C$ infinite.
In~\cite{MRR}, the weakest of \Erdos{}'s conjectures was resolved.
A more streamlined proof~\cite{Host20} simplified many of the technical arguments in~\cite{MRR}.
The generalization to $k$-fold sumsets $B_1+\cdots+B_k$ with $B_1,\dots,B_k \subset \N$ infinite was proved by the authors in~\cite{KMRR}.
Recent related results include work on analogs of \Erdos{}'s conjectures in more general groups \cite{CharamarasMountakis}, work on unrestricted sumsets~\cite{KousekRadic,Kousek}, and work on sumsets in the primes~\cite{TZ}.
We refer to our survey~\cite{KMRR3} for further references and variations.

Our proof of \cref{thm_density_finite_sums_theorem} relies on ergodic theory: the first step is to reduce the combinatorial problem to a dynamical statement, following Furstenberg's framework for recasting combinatorial questions in terms of properties of orbits in a measure preserving system.
Our main dynamical theorem, from which we derive \cref{thm_density_finite_sums_theorem} in \cref{sec comb to meas}, states the following.
\begin{Theorem}
\label{thm main dynamic intro}
Let $T$ be a homeomorphism of a compact metric space $X$ and let $\mu$ be a $T$-invariant Borel probability measure on $X$.
For every point $a\in X$ with
\[
\mu(\overline{\{T^na:n\in\N\}})=1,
\]
for every $k\in\N$,  and for every open set $E \subset X$ with $\mu(E)>0$, there exist an integer $t \ge 0$, points $x_1, \dots, x_k\in X$, and a strictly increasing sequence
$c\colon \N\to\N$
such that
\begin{equation}
\label{eqn_intro_E-progression_condition}
\lim_{n\to\infty}T^{c(n)}a  = x_1,
\quad
\lim_{n\to\infty}T^{c(n)}x_1  = x_2,
\quad
\ldots,
\quad
\lim_{n\to\infty}T^{c(n)}x_{k-1}  = x_k,
\end{equation}
all hold, and $x_i\in T^{-t}E$ for all $i=1,\dots,k$.
\end{Theorem}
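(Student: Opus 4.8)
The plan is to build the sequence $c$ and the points $x_1,\dots,x_k$ iteratively by a compactness/diagonal argument, using as the engine a *coloring* or *syndetic return-time* structure coming from the positivity of $\mu(E)$. First I would replace $E$ by a slightly smaller open set whose closure still has positive measure, so that I can work with genuine convergence into a fixed compact set. The key observation is that because $\mu$ is $T$-invariant and $a$ has a dense orbit of full measure, the set $R=\{n\in\N:T^na\in E\}$ has positive upper Banach density (indeed $\bar{\dens}^*(R)\geq\mu(E)$ by the pointwise description of the orbit closure), and more importantly one can find, via a standard Furstenberg-type correspondence already implicit in the setup, a point whose orbit visits $E$ along a set with good combinatorial (IP-like) structure. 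The real content, though, is not just hitting $E$ once but arranging the nested limit relations in \eqref{eqn_intro_E-progression_condition} simultaneously with the condition $x_i\in T^{-t}E$ for a *single* shift $t$.

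The heart of the argument is an induction on $k$. For $k=1$ one needs a strictly increasing $c$ and a point $x_1$ with $T^{c(n)}a\to x_1$ and $x_1\in T^{-t}E$, i.e. $T^tx_1\in E$; equivalently $T^{t+c(n)}a\to T^tx_1\in E$, so it suffices to find a syndetic-with-structure set of return times of $a$ to $E$ and pass to a convergent subsequence of $(T^na)$ along it — here $t=0$ works after choosing $c$ inside $R$ and extracting a limit point $x_1\in\overline E$; a small shrinking argument puts $x_1$ genuinely in $E$, hence in $T^{-0}E$. For the inductive step, suppose the statement holds for $k-1$. Given $E$ with $\mu(E)>0$, I would first apply a *multiple recurrence / IP-recurrence* input (this is where the dynamics of arithmetic progressions and nilsystems enter, presumably via results developed earlier in the paper) to produce an open set $E'\subseteq E$ with $\mu(E')>0$ and a shift so that returns to $E'$ propagate: concretely, one wants that whenever $y\in E'$ then along a suitable IP-set of times $n$, $T^ny$ returns close to $E'$. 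Then apply the inductive hypothesis to $E'$ to get $t$, points $x_1,\dots,x_{k-1}$, and a sequence $c_0$; finally build $x_k$ as a limit of $T^{c(n)}x_{k-1}$ along a sub-IP-sequence $c$ of $c_0$ chosen so that simultaneously $T^{t}x_k\in E$ — this last simultaneous choice is what forces us to work with IP-sets rather than arbitrary sequences, because only the IP/Hindman structure is stable under the repeated passage to sub-sequences that the $k$-fold nesting requires.

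The main obstacle I expect is exactly this \emph{coherence across the $k$ limits with one common shift $t$}: naively, each application of recurrence would produce its own shift, and the shifts need not agree. Overcoming this should require setting up the induction not for a single open set but for a whole finite family of open sets (or for $E$ together with all its preimages $T^{-j}E$ up to some bound), proving the stronger statement that the same $c$ works for all of them at once; the shift $t$ then emerges as a bounded quantity determined by the combinatorics of an arithmetic-progression configuration inside $R=\{n:T^na\in E\}$, which is where \Szemeredi-type / IP-\Szemeredi\ input and the nilpotent structure theory are unavoidable. A secondary technical point is ensuring the sequence $c$ can be taken *strictly increasing* and common to all $k$ stages: this is handled by a diagonalization, taking $c$ to lie in a decreasing chain of IP-sets $\mathrm{FS}(B_1)\supseteq\mathrm{FS}(B_2)\supseteq\cdots$ and using a Milliken--Taylor / Hindman-type argument to extract a single sequence realizing all the required limits.
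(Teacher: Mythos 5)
Your plan has a genuine gap at its engine. The inductive step rests on producing an open set $E'\subseteq E$ of positive measure together with a shift such that \emph{every} point $y\in E'$ returns close to $E'$ along a common IP-set of times. No tool you cite provides this: measure-theoretic (Poincar\'e or multiple) recurrence gives only almost-every-point recurrence, with return times depending on the point, and Hindman/Milliken--Taylor arguments are partition results that cannot be invoked under a density hypothesis. Indeed the Straus example quoted in the introduction shows that IP-structured return times are exactly what positive density fails to guarantee, which is why the theorem only asks for sums of at most $k$ terms; an argument that threads a genuine IP-set of times through the construction is therefore trying to prove something stronger than what is true in this setting. Even granting such an $E'$, the coherence problem you flag is not actually resolved: the inductive hypothesis hands you one sequence $c_0$ realizing the first $k-1$ limits, and $x_{k-1}$ is merely a limit point extracted by compactness, so you have no control whatsoever over where $T^{n}x_{k-1}$ travels along $n\in c_0$; refining $c_0$ can make $T^{c(n)}x_{k-1}$ converge, but not converge \emph{into} $T^{-t}E$, and the ``propagation'' property would have to hold along the specific times $c_0$ rather than along some IP-set attached to the point $T^tx_{k-1}$.

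This simultaneous control of all $k$ limits with a single shift is precisely what the paper's machinery is built for, and it proceeds quite differently: after reducing to an ergodic system with a generic point $a$ (and then to an extension with continuous pronilfactor maps), one constructs a single measure $\sigma$ on $X^{k+1}$ from the $(k-1)$-step pronilfactor $Z_{k-1}$ --- the point $(\pi_{k-1}a,\dots,\pi_{k-1}a)$ is generic for $T\times T^2\times\cdots\times T^k$, giving a measure $\xi$ on $Z_{k-1}^k$, which is lifted via conditional expectations. One then proves two things: $\sigma(X\times T^{-t}E\times\cdots\times T^{-t}E)>0$ for some $t$, by comparing diagonal averages of $\sigma$ with the Furstenberg joining and invoking Furstenberg's multiple recurrence theorem; and that $\sigma$ is \emph{progressive}, meaning the sets $(X\times U_1\times\cdots\times U_k)\cap \TT^{-n}(U_1\times\cdots\times U_k\times X)$ have positive measure for infinitely many $n$ --- this uses Host--Kra uniformity seminorms to replace functions by their pronilfactor projections, unique ergodicity of nilsystems, and a new extension of Furstenberg's theorem for product sets under $T\times T^2\times\cdots\times T^k$-invariant measures. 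The \Erdos{} progression $(a,x_1,\dots,x_k)$ with all $x_i\in T^{-t}E$ is then extracted by a nested shrinking-balls argument inside the support of $\sigma$, with no induction on $k$ and no IP structure on the times. To repair your outline you would need, at minimum, a substitute for the unjustified uniform IP-recurrence of $E'$ and a mechanism tying the location of $x_{k-1}$'s orbit along $c_0$ to the set $E$; the paper's answer to both is the progressive measure $\sigma$.
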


The proof that \cref{thm main dynamic intro} implies \cref{thm_density_finite_sums_theorem} is contained in \cref{sec comb to meas} and uses by now well-understood techniques. The proof of \cref{thm main dynamic intro} then occupies the rest of the paper.

\subsection{Outline of the proof of \cref{thm main dynamic intro}}
\label{subsec:proof outline}

We describe the strategy used to prove \cref{thm main dynamic intro}, referring as needed to terminology and notation introduced later in the paper.

We start by introducing a new class of measures on $X^{k+1}$ defined by a recurrence property which we call \emph{progressive} (\cref{def_shifted_recurrence}). We show that if a measure on $X^{k+1}$ is progressive, then any open set $U\subset X^{k+1}$ with positive measure contains a point $(a,x_1,\dots,x_k)$ satisfying~\eqref{eqn_intro_E-progression_condition} --  we call such points \emph{\Erdos{} progressions} (\cref{def_erdos_progression}). Thus every progressive measure is supported on the closure of the set of \Erdos{} progressions.

Our goal, then, is to associate to each system $(X,\mu,T)$ a  measure $\sigma$ on $X^{k+1}$ that is progressive.
In the extreme case when the system is a rotation on a compact abelian group, there is a concise algebraic description of the set of \Erdos{} progressions, which allows for a simple and explicit construction of a progressive measure (see \cref{eg:tau rotation}).
More generally, if $X$ is a nilmanifold and $(X,\mu,T)$ is a nilsystem (see \cref{def:nil}), one can also give an explicit description of a progressive measure as the Haar measure on a suitable sub-nilmanifold.
In the opposite case of weakly mixing systems, we  can easily show  that the product measure is progressive due to the randomness inherent in such a system  (see \cref{eg:tau_weak_mixing}).

Motivated by the insights of these two contrasting examples, we construct in \cref{sec_constructingsigma} for any system $(X,T)$ a measure $\sigma$ on $X^{k+1}$ (see \cref{def_tausigma})
consisting of a structured component arising from the maximal pronilfactor of $(X,T)$ and a random component that is relatively independent over the maximal pronilfactor.
The proof of our main result then reduces to two tasks:
\begin{itemize}
    \item[(i)] Showing that for some $t\in \N$, we have $\sigma(\{a\} \times T^{-t} E \times \cdots \times T^{-t} E) > 0$;
    \item[(ii)] Proving that $\sigma$ is a progressive measure.
\end{itemize}
Since (i) and (ii) together imply that $\{a\} \times T^{-t}E \times \cdots \times T^{-t}E$ contains an \Erdos{} progression, namely, a point $(a,x_1,\ldots,x_k)$ satisfying \eqref{eqn_intro_E-progression_condition}, the conclusion of \cref{thm main dynamic intro} follows.

The proof of~(i) is carried out in \cref{sec_findingt} and uses Furstenberg's multiple recurrence theorem, that the maximal pronilfactor is the characteristic factor for multiple ergodic averages, that products of pronilsystems are pronilsystems, and that orbit closures in pronilsystems are uniquely ergodic.
Our proof of (ii), which occupies \cref{sec_proof_of_sigma_shifted}, includes a novel extension (\cref{thm_weirdgeneralSzemeredi}) of Furstenberg's multiple recurrence theorem, several advanced properties of uniformity norms (established in \cref{subsec:uniformity_norms}), and again basic facts about pronilsystems such as being closed under products and that every orbit closure in a pronilsystem is uniquely ergodic.

\subsection{Acknowledgments}

The first author was partially supported by the National Science Foundation grant  DMS-2348315. The second author was supported by the EPSRC Frontier Research Guarantee grant EP/Y014030/1. The third author was supported by the Swiss National Science Foundation grant TMSGI2-211214.  We thank Ethan Acklesberg and Ioannis Kousek for numerous helpful comments on an earlier version of the article and Borys Kuca for providing useful references, and we thank the International Centre for Mathematical Science and Northwestern University for hosting and funding activities at which the authors collaborated.  Finally we thank the referee for helpful suggestions that improved the article.

\section{Translation to measure preserving systems}
\label{sec comb to meas}

In this section, we show that \cref{thm_density_finite_sums_theorem} follows from \cref{thm main dynamic intro}.

By a \define{topological system} we mean a pair $(X,T)$ where $X$ is a compact metric space and $T\colon X\to X$ is a homeomorphism.
Given a compact metric space $Y$, let  $\M(Y)$ denote the set of all Borel probability measures on $Y$.
By a \define{measure preserving system} we mean a tuple $(X,\mu,T)$ where $(X,T)$ is a topological system and $\mu\in\M(X)$ is invariant under $T$.
All spaces are implicitly endowed with their Borel $\sigma$-algebras and so we do not include the $\sigma$-algebras in our notation.
A measure preserving system is said to be \define{ergodic} if any $T$-invariant Borel subset of $X$ has either measure $0$ or measure $1$.

Building on Furstenberg's dynamical approach to combinatorial problems~\cite{Furstenberg-book}, in our earlier work~\cite{KMRR2} we established a connection between sumsets in $\N$ of the form~\eqref{eqn:tru hindman explicit} for  $k=2$
and ``\Erdos{} progressions'' in topological systems.
We extend that definition to handle sumsets of the form~\eqref{eqn:tru hindman explicit}.

\begin{Definition}[cf.~{\cite[Definition 2.1]{KMRR2}}]
\label{def_erdos_progression}
Given a topological system $(X,T)$ and $k\in\N$, a point
\[
(x_0,x_1,\dots,x_{k-1},x_k) \in X^{k+1}
\]
is a \define{$(k+1)$-term \Erdos{} progression} if there exists a strictly increasing sequence $c:\N\to\N$ such that
\begin{equation}
    \label{eq_erdosprogressiondef}T^{c(n)}x_0\to x_1,\qquad T^{c(n)}x_1\to x_2,\qquad\dots\qquad T^{c(n)}x_{k-1}\to x_k
\end{equation}
as $n \to\infty$.
\end{Definition}

The next lemma demonstrates the connection between $(k+1)$-term \Erdos{} progressions and sumsets.

\begin{Lemma}
\label{lem_erdos-progressions_give_sumsets}
Fix $k\in\N$, a topological system $(X,T)$, and open sets $U_1,\dots,U_k \subset X$.
If there exists an \Erdos{} progression $(x_0,\dots,x_k) \in X^{k+1}$ with $x_j \in U_j$ for $j = 1, \dots, k$, then there exists a strictly increasing sequence $b\colon \N\to\N$ such that
\[
\big\{b(i_1) + \dots + b(i_m) : i_1 < \dots < i_m \in \N \big\} \subset\{ n \in \N : T^nx_0 \in U_m \}
\]
for $m = 1, \dots, k$.
\end{Lemma}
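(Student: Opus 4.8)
The plan is to construct the sequence $b$ by recursion, interleaving the choice of each value $b(i)$ with the choice of a tolerance $\epsilon_i>0$, so as to maintain the invariant that every sum of at most $k$ distinct terms of $b$ carries $x_0$ to within $\epsilon_0$ of the appropriate point of the progression. Two facts are all that is used: the semigroup identity $T^{a+b}=T^a\circ T^b$, which lets one strip a sum of one term at a time, and the defining convergences $T^{c(n)}x_j\to x_{j+1}$ for $j=0,1,\dots,k-1$ of the \Erdos{} progression, where $c$ is the sequence furnished by \cref{def_erdos_progression}. I would first fix $\epsilon_0>0$ small enough that the closed ball of radius $\epsilon_0$ about $x_m$ lies in $U_m$ for every $m\in\{1,\dots,k\}$ (possible since $U_m$ is open and $x_m\in U_m$), and then build $b(1)<b(2)<\cdots$ together with $\epsilon_0>\epsilon_1>\epsilon_2>\cdots>0$ in the order $\epsilon_0,\ b(1),\ \epsilon_1,\ b(2),\ \epsilon_2,\dots$.

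At stage $i$, with $b(1),\dots,b(i-1)$ and $\epsilon_0,\dots,\epsilon_{i-1}$ already chosen, I would first pick $b(i)=c(m)$ with $m$ so large that $c(m)>b(i-1)$ (vacuous when $i=1$) and $d(T^{c(m)}x_j,x_{j+1})<\epsilon_{i-1}/2$ for all $j\in\{0,\dots,k-1\}$; this is possible because $T^{c(n)}x_j\to x_{j+1}$ for each of the finitely many $j$ and $c(n)\to\infty$. Then, using that the single homeomorphism $T^{b(i)}$ is continuous at each of the finitely many points $x_1,\dots,x_{k-1}$, I would pick $\epsilon_i\in(0,\epsilon_{i-1})$ small enough that $d(z,x_j)\le\epsilon_i$ forces $d(T^{b(i)}z,T^{b(i)}x_j)<\epsilon_{i-1}/2$ for all $j\in\{1,\dots,k-1\}$.

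The verification is then an induction on the number of summands, proving that for every $p\in\{1,\dots,k\}$ and all $j_1<\cdots<j_p$ in $\N$,
\[
d\big(T^{b(j_1)+\cdots+b(j_p)}x_0,\ x_p\big)<\epsilon_{j_1-1}.
\]
The case $p=1$ is immediate from the choice of $b(j_1)$ with $j=0$. For the step I would set $y:=T^{b(j_2)+\cdots+b(j_p)}x_0$, so that $T^{b(j_1)+\cdots+b(j_p)}x_0=T^{b(j_1)}(y)$; the inductive hypothesis gives $d(y,x_{p-1})<\epsilon_{j_2-1}$, and since $j_2-1\ge j_1$ and the $\epsilon_i$ are decreasing, $d(y,x_{p-1})\le\epsilon_{j_1}$. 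The choice of $\epsilon_{j_1}$ then bounds $d(T^{b(j_1)}y,T^{b(j_1)}x_{p-1})$ by $\epsilon_{j_1-1}/2$, and the choice of $b(j_1)$ bounds $d(T^{b(j_1)}x_{p-1},x_p)$ by $\epsilon_{j_1-1}/2$, so the triangle inequality closes the induction. Taking $p=m$ and using $\epsilon_{j_1-1}\le\epsilon_0$ together with the choice of $\epsilon_0$ gives $T^{b(j_1)+\cdots+b(j_m)}x_0\in U_m$, that is, $\{b(i_1)+\cdots+b(i_m):i_1<\cdots<i_m\}\subseteq\{n\in\N:T^nx_0\in U_m\}$ for each $m\in\{1,\dots,k\}$, which is the assertion of the lemma.

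The only real obstacle --- and the reason the construction must be interleaved rather than carried out in a single sweep --- is that the maps $T^{c(n)}$, while individually continuous, need not be equicontinuous as $n\to\infty$, so one cannot naively compose the convergences $T^{c(n)}x_j\to x_{j+1}$. What makes it work is the synchronization of scales: whenever the induction applies a map $T^{b(j_1)}$, that map was already pinned down at stage $j_1$, and the approximation of $y$ to $x_{p-1}$ has quality $\epsilon_{j_2-1}\le\epsilon_{j_1}$ --- exactly the scale at which $\epsilon_{j_1}$ was chosen so that $T^{b(j_1)}$ distorts distances near $x_{p-1}$ by at most $\epsilon_{j_1-1}/2$. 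Everything else is routine bookkeeping.
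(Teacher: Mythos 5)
Your proof is correct and is essentially the paper's argument in metric dress: the paper interleaves the choice of $b(i)=c(n_i)$ with the construction of nested open sets $U_{j,i}:=U_{j,i-1}\cap T^{-b(i)}U_{j+1,i-1}$ containing $x_j$, which play exactly the role of your shrinking tolerances $\epsilon_i$ (the continuity of $T^{b(i)}$ entering through openness of preimages rather than explicit moduli), and the final verification likewise peels off one summand at a time.
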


\begin{proof}
The special case $k=2$ was established in~\cite[Theorem 2.2]{KMRR2}, and the proof of the general case is essentially the same, with only slightly more complicated notation.

Let $c \colon \N\to\N$ be a strictly increasing sequence such that~\eqref{eq_erdosprogressiondef} holds as $n \to \infty$.
Set $U_{j,0} = U_j$ for all $1 \le j \le k$.

Each $U_{j,0}$ is a neighborhood of $x_j$, and so there exists $n_1\in\N$ such that $b(1) := c(n_1)$ satisfies $T^{b(1)}x_{j-1} \in U_{j,0}$ for $j=1,\dots,k$.
Set $U_{j,1} := U_{j,0} \cap T^{-b(1)}U_{j+1,0}$ for each $j=1,\dots,k-1$ and set $U_{k,1}:=U_{k}$.

Note that $U_{j,1}$ is an open set that contains $x_j$ for  $j=1,\dots,k$ and so there exists $n_2 >n_1$ such  that $b(2) := c(n_2)$ satisfies $T^{b(2)}x_{j-1} \in U_{j,1}$ for  $j=1,\dots,k$.
We then set $U_{j,2} := U_{j,1} \cap T^{-b(2)}U_{j+1,1}$ for each $j=1,\dots,k-1$ and set $U_{k,2}:=U_{k}$.

We continue inductively, noting that for each $i \in \N$ and $j=1,\dots,k$ the open set $U_{j,i}$ contains $x_j$, so that we can
find some $n_i > n_{i-1}$
such that $b(i):=c(n_i)$ satisfies $T^{b(i)}x_{j-1} \in U_{j,i-1}$ for  $j=1,\dots,k$.
We then let $U_{j,i} := U_{j,i-1} \cap T^{-b(i)}U_{j+1,i-1}$ for each $j=1,\dots,k-1$ and set $U_{k,i}:=U_{k}$.

Let $m\leq k$ and fix $i_1 < \dots < i_m$ in $\N$.
We need to verify that $T^{b(i_1) + \dots + b(i_m)}x_0 \in U_m$.
We have
\[
T^{b(i_m)}x_0 \in U_{1,i_m-1}\subset U_{1,i_{m-1}}
\]
because $i_m- 1 \ge i_{m-1}$.
Noting that, from the construction, $U_{1,i_{m-1}} \subset T^{-b(i_{m-1})} U_{2,i_{m-1}-1}$, and since $i_{m-1}-1 \ge i_{m-2}$, we have
\[
T^{b(i_{m-1})+b(i_m)}x_0 \in T^{b(i_{m-1})}U_{1,i_{m-1}}\subset U_{2,i_{m-1} - 1} \subset U_{2,i_{m-2}}.
\]
Proceeding in this manner, we deduce that
\[
T^{b(i_{m - h})+ \dots +b(i_{m-1})+b(i_m)  }x_0 \in U_{h+1, i_{m-h-1}}
\]
for  $h = 0,\dots,m-1$.
As $U_{m,0}=U_m$, the desired result is the case  $h = m-1$.
\end{proof}

To connect \cref{lem_erdos-progressions_give_sumsets} to \cref{thm_density_finite_sums_theorem},
we use a version of Furstenberg's correspondence principle.

\begin{Theorem}[Correspondence principle, cf. {\cite{Furstenberg-1977, Furstenberg-book}}]
\label{thm_correspondence_principle}
Given a set $A\subset\N$ with positive upper Banach density, there exists a measure preserving system $(X,\mu,T)$, a point $a\in X$ with $\mu(\overline{\{T^na : n \in \N \}})=1$,
and an open set $E\subset X$ such that $A=\{n\in\N:T^na\in E\}$ and $\mu(E)>0$.
\end{Theorem}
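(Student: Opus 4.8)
The plan is to build the standard symbolic model. I would take $X=\{0,1\}^{\Z}$ with the product topology, a compact metric space, and let $T\colon X\to X$ be the left shift, which is a homeomorphism. Define the point $a\in X$ by $a(n)=1$ if $n\in A$ and $a(n)=0$ otherwise (for every $n\in\Z$; recall $A\subseteq\N$), and let $E=\{x\in X:x(0)=1\}$, which is clopen, hence open. With the convention $(T^nx)(m)=x(m+n)$ we get, for every $n\in\N$, that $T^na\in E$ if and only if $a(n)=1$ if and only if $n\in A$; thus $A=\{n\in\N:T^na\in E\}$. It then only remains to produce a suitable $T$-invariant measure.

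For the measure I would run a Krylov--Bogolyubov averaging argument adapted to the density of $A$. Since $\dens^*(A)>0$, choose $M_j\in\N$ and $N_j\to\infty$ with $|A\cap\{M_j+1,\dots,M_j+N_j\}|/N_j\to\dens^*(A)$, and set
\[
\mu_j \;=\; \frac{1}{N_j}\sum_{n=M_j+1}^{M_j+N_j}\delta_{T^na}\ \in\ \M(X).
\]
By weak-$*$ compactness of $\M(X)$ (valid since $X$ is compact metric), pass to a subsequence along which $\mu_j\to\mu$ weak-$*$. Invariance of $\mu$ follows from the usual telescoping estimate: for $f\in\Cont(X)$ one has $\big|\int f\circ T\intd\mu_j-\int f\intd\mu_j\big|\le 2\|f\|_\infty/N_j\to 0$, hence $\int f\circ T\intd\mu=\int f\intd\mu$, so $T_*\mu=\mu$. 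Because $E$ is clopen, $\mathbf 1_E$ is continuous, so $\mu(E)=\lim_j\mu_j(E)=\lim_j |A\cap\{M_j+1,\dots,M_j+N_j\}|/N_j=\dens^*(A)>0$.

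For the orbit-closure condition, set $Y=\overline{\{T^na:n\in\N\}}$, a closed subset of $X$. Since the upper Banach density is defined with $M\in\N$, every averaging interval $\{M_j+1,\dots,M_j+N_j\}$ lies inside $\N$, so each $\mu_j$ is supported on $\{T^na:n\in\N\}\subseteq Y$, giving $\mu_j(Y)=1$. As $Y$ is closed, the portmanteau theorem gives $\mu(Y)\ge\limsup_j\mu_j(Y)=1$, hence $\mu(Y)=1$. Then $(X,\mu,T)$ together with $a$ and $E$ witnesses the conclusion. The only point needing care is this last one: one must make sure the intervals realizing the density lie entirely in $\N$, so that the approximating measures are carried by the forward orbit of $a$, and then use that mass on closed sets is upper semicontinuous under weak-$*$ limits; note there is no need to pass to a subsystem, since the full shift is already a topological system on which $T$ is a homeomorphism.
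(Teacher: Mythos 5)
Your proof is correct and is essentially the standard Furstenberg correspondence argument that the paper simply cites (symbolic model on $\{0,1\}^{\Z}$, cylinder set $E$, Krylov--Bogolyubov averages along intervals nearly realizing the upper Banach density, weak-$*$ limit). You also correctly handle the one wrinkle the paper emphasizes for this variant -- that $\mu$ must give full measure to the \emph{forward} orbit closure of $a$ -- by noting that the averaging windows $\{M_j+1,\dots,M_j+N_j\}$ lie in $\N$ and invoking upper semicontinuity of measure on closed sets under weak-$*$ convergence.
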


Throughout we work with invertible measure preserving transformations, but for our combinatorial conclusions, we need the measure to be supported on the forward orbit closure of $a$.
This explains the slight differences in the formulation of the correspondence principle from those used to prove density regularity of finite configurations, such as arithmetic progressions.

Using \cref{thm_correspondence_principle} and \cref{lem_erdos-progressions_give_sumsets}, we can quickly derive \cref{thm_density_finite_sums_theorem} from \cref{thm main dynamic intro}.

\begin{proof}[Proof that \cref{thm main dynamic intro} implies \cref{thm_density_finite_sums_theorem}]
Suppose $A\subset\N$ has positive upper Banach density.
Invoking \cref{thm_correspondence_principle}, we find a system $(X,\mu,T)$, a point $a\in X$, and an   open set
$E\subset X$ with $\mu(E)>0$ and such that $A=\{n\in\N:T^na\in E\}$ and $\mu(\overline{\{T^na : n \in \N\}}) = 1$.
Fix $k \in \N$.
\cref{thm main dynamic intro} gives an integer $t \ge 0$, points $x_1,\ldots,x_k\in X$ with $T^tx_i \in E$ for each $i=1,\dots,k$, and a strictly increasing sequence $c\colon \N\to\N$ such that~\eqref{eqn_intro_E-progression_condition} holds, which is equivalent to the assertion that $(a,x_1,\dots,x_k)$ forms an \Erdos{} progression.
Setting $U_j=T^{-t}E$ for all $1 \le j \le k$,
the desired conclusion then follows from \cref{lem_erdos-progressions_give_sumsets}.
\end{proof}

We are left with establishing \cref{thm main dynamic intro}.
We end this section with a standard argument reducing \cref{thm main dynamic intro} to the statement in \cref{thm_maindynamical}.
We start by recalling some definitions.
A \define{\Folner{} sequence} in $\N$ is a sequence $\Phi=(\Phi_N)_{N\in\N}$ of finite nonempty subsets of $\N$ satisfying
\[
\lim_{N\to\infty} \frac{|(\Phi_N+1)\cap \Phi_N|}{|
\Phi_N|} = 1.
\]
Note that, throughout, although we work with homeomorphisms we only use \Folner{} sequences in $\N$. We restrict to such \Folner{} sequences because we must be able to conclude that the sequence $(b(n))_{n \in \N}$ that we construct is a sequence in $\N$ and not just in $\Z$.
This necessitates our assumption in Theorems~\ref{thm main dynamic intro} and~\ref{thm_correspondence_principle} that the forward orbit closure of the point $a$ have full measure.
Recall that whenever $A\subset\N$ has positive upper Banach density, there is a \Folner{} sequence
$\Phi=(\Phi_N)_{N\in\N}$ such that the limit
\[
\lim_{N\to\infty}\frac{|A\cap \Phi_N|}{|\Phi_N|}
\]
exists and is positive.

Given a measure preserving system $(X,\mu,T)$, a point $a \in X$ is  \define{generic} for $\mu$ along a \Folner{} sequence $\Phi=(\Phi_N)_{N\in\N}$, written $a \in \gen(\mu,\Phi)$, if
\[
\mu=\lim_{N\to\infty}\frac1{|\Phi_N|}\sum_{n\in\Phi_N}\delta_{T^na},
\]
where $\delta_x$ is the Dirac measure at the point $x \in X$ and the limit is taken in the weak* topology.

\begin{Theorem}
\label{thm_maindynamical}
Let $(X,\mu,T)$ be an ergodic measure preserving system.
For every $k\in\N$, every \Folner{} sequence $\Phi$, every $a\in\gen(\mu,\Phi)$, and every open set $E \subset X$ with $\mu(E)>0$, there exists an integer $t \ge 0$ and an \Erdos{} progression of the form $(a,x_1,\dots,x_k)$ with $x_i\in T^{-t}E$ for $i=1,\dots,k$.
\end{Theorem}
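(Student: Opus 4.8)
The plan is to carry out the program sketched in \cref{subsec:proof outline}; the argument occupies the remainder of the paper, and I only indicate its architecture here. Fix the ergodic system $(X,\mu,T)$ together with the data $k$, $\Phi$, $a\in\gen(\mu,\Phi)$, $E$ as in the statement. The aim is to attach to this system a Borel probability measure $\sigma$ on $X^{k+1}$ that is \emph{progressive} in the sense of \cref{def_shifted_recurrence}, whose construction ties the first coordinate to the orbit of $a$ along $\Phi$, and which charges the open set $X\times(T^{-t}E)^{k}$ for some integer $t\ge 0$. The first thing to prove is the soft implication that a progressive measure is supported on the closure of the set of $(k+1)$-term \Erdos{} progressions (\cref{def_erdos_progression}), and in fact that \emph{every} open $U\subseteq X^{k+1}$ with $\sigma(U)>0$ contains such a progression. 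I would obtain this by diagonalising over a countable base of $X^{k+1}$: the defining recurrence of a progressive measure, applied to a shrinking sequence of neighbourhoods of a well-chosen candidate point, produces a single increasing sequence $c\colon\N\to\N$ simultaneously witnessing all $k$ limits in~\eqref{eq_erdosprogressiondef}. No equicontinuity of $T$ is needed here because the statement concerns a single orbit, and indeed the set of \Erdos{} progressions is genuinely not closed in general. Applying this with $U=X\times(T^{-t}E)^{k}$, and using that the first marginal of $\sigma$ comes from the generic orbit of $a$ — which is exactly where the hypotheses $a\in\gen(\mu,\Phi)$ and the restriction to the forward orbit closure of $a$ are used — one extracts an \Erdos{} progression $(a,x_1,\dots,x_k)$ with $x_i\in T^{-t}E$, the shift $t$ serving only to accommodate the boundary of $E$.

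The substance is the construction of $\sigma$ and the proof that it is progressive. I would begin with two contrasting model cases. When $(X,\mu,T)$ is a rotation on a compact abelian group the set of \Erdos{} progressions admits a transparent algebraic description, and one can write $\sigma$ down explicitly as a Haar measure on an appropriate coset of a closed subgroup of $X^{k+1}$ (cf.\ \cref{eg:tau rotation}); more generally, when $(X,\mu,T)$ is a nilsystem (\cref{def:nil}) the same strategy applies with $\sigma$ the Haar measure on a suitable sub-nilmanifold, the requisite convergences coming from the equidistribution theory of polynomial orbits on nilmanifolds and the unique ergodicity of such orbits. At the opposite extreme, when $(X,\mu,T)$ is weakly mixing one checks directly from the mixing property that the product measure is progressive (cf.\ \cref{eg:tau_weak_mixing}). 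For a general ergodic system I would combine these inputs: using Host--Kra structure theory I would build $\sigma$ (this is \cref{def_tausigma}) as a coupling of a \emph{structured} part governed by an appropriate nilfactor and a \emph{pseudorandom} part that is orthogonal to every nilfactor and hence negligible for the uniformity seminorms.

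Verifying that this $\sigma$ is progressive is the heart of \cref{sec_proof_of_sigma_shifted}. The progressive property unwinds into the positivity of certain multilinear correlation averages along $\Phi$ — schematically, $\Phi$-averages of products of indicator functions of the $U_i$ pushed by the powers of $T$ dictated by the progressive structure of $X^{k+1}$. I would analyse these by van der Corput estimates and a PET-style induction, showing that the relevant Host--Kra nilfactor is characteristic, so that the pseudorandom part of $\sigma$ contributes nothing, and then compute the surviving structured contribution on nilsystems, where it is strictly positive by equidistribution together with unique ergodicity — this last point being what upgrades positivity of an integral to an honest recurrence statement about the orbit of the generic point $a$. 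Since the configuration appearing here is not the classical one driving \Szemeredi's theorem, this step needs a multiple recurrence theorem adapted to it, namely \cref{thm_weirdgeneralSzemeredi}, which extends Furstenberg's multiple recurrence theorem.

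The main obstacle, and the point where the argument departs most from the $k=2$ case of~\cite{KMRR2}, is precisely this reduction to nilsystems. Contrary to most of ergodic Ramsey theory, where the difficulty lies in the nilsystem computation, here the nilsystem case is comparatively explicit; the real work is in setting up the correct characteristic factor for the progressive recurrence, controlling the pseudorandom complement by uniformity seminorms, and — running alongside all of this — keeping the construction compatible with the prescribed generic point $a$ and with the one-sided, forward-orbit nature of the problem, so that the \Erdos{} progression eventually produced really does begin at $a$ and really does land, after the shift $t$, inside $E$.
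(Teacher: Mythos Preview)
Your proposal is correct and follows essentially the same architecture as the paper: reduce to the existence of a progressive measure $\sigma$ with $\sigma(\{a\}\times X^k)=1$ and $\sigma(X\times(T^{-t}E)^k)>0$ (\cref{thm_shifted_recurrence_happens}), extract \Erdos{} progressions from progressive measures by a nested-neighbourhood diagonalisation (\cref{prop_progs_from_shifted}), construct $\sigma$ by lifting a nilsystem measure via the $(k-1)$-step pronilfactor (\cref{def_tausigma}), and verify progressivity by reducing to the nilfactor through uniformity-seminorm bounds and then invoking the extended multiple recurrence result \cref{thm_weirdgeneralSzemeredi}.

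One point deserves correction: your remark that ``the shift $t$ serv[es] only to accommodate the boundary of $E$'' misidentifies its role. The shift is essential even for open $E$ with $\mu(\partial E)=0$; the measure $\sigma$ need not charge $X\times E^k$ at all (think of the odd numbers example in the introduction). In the paper the existence of $t$ is obtained in \cref{thm_sigma_sz_thing} by showing that the $\Phi$-average of $\sigma(X\times T^{-n}E\times\cdots\times T^{-n}E)$ equals an integral against the Furstenberg joining $\lambda$ (\cref{sigma_diagonal_average_furstenberg}), which is then positive by Furstenberg's original multiple recurrence theorem. This step is separate from, and uses a different recurrence input than, the progressivity argument where \cref{thm_weirdgeneralSzemeredi} enters.
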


\begin{proof}[Proof that \cref{thm_maindynamical} implies \cref{thm main dynamic intro}]
Suppose $X,T,a,\mu, E$ are as in \cref{thm main dynamic intro}.
Using ergodic decomposition of the measure $\mu$, we can find a $T$-invariant ergodic measure $\tilde\mu$ such that $\tilde\mu(E)\geq\mu(E)$ and $\tilde\mu(\overline{\{ T^n a : n \in \N \}}) = 1$.
It follows from~\cite[Proposition 3.9]{Furstenberg-book} that there exists a \Folner{} sequence $\Phi$ in $\N$ such that $a\in\gen(\tilde\mu,\Phi)$.

By \cref{thm_maindynamical}, for each $k\in\N$ there exist an \Erdos{} progression $(a,x_1,\dots,x_k)$ and an integer $t \ge 0$ such that  $x_i\in T^{-t}E$ for  $i=1,\dots,k$.
Therefore, there exists a strictly increasing sequence $c\colon \N\to\N$ such that~\eqref{eqn_intro_E-progression_condition} holds.
\end{proof}

The proof of \cref{thm_maindynamical} occupies the rest of the paper.

\section{A variation on recurrence in $X^{k+1}$}
\label{sec_shiftedRecurrence}
Starting with this section, we depart significantly from earlier approaches~\cite{MRR,Host20,KMRR,KMRR2} to similar problems.

Fix a topological system $(X,T)$.
If $(a,x_1,\dots,x_k)$ is an \Erdos{} progression and $U_j$ is a neighborhood of $x_j$ for  $1 \le j \le k$, then there is $n \in \N$ such that  $a \in T^{-n} U_1$ and  $x_j \in U_j \cap T^{-n} U_{j+1}$ for $1 \le j \le k-1$. In $X^{k+1}$, we can write this as
\[
(a,x_1,\dots,x_{k-1},x_k) \in (X \times U_1 \times \cdots \times U_{k-1} \times U_k) \cap \TT^{-n} (U_1 \times U_2 \times \cdots \times U_k \times X)
\]
where
\[
\TT=\underbrace{T\times T\times\ldots\times T}_{k+1~\text{times}}
\]
denotes the product transformation.
This leads us to consider when the intersection
\[
(X \times U_1 \times \cdots \times U_{k-1} \times U_k) \cap \TT^{-n} (U_1 \times U_2 \times \cdots \times U_k \times X)
\]
is nonempty, motivating our next definition.

\begin{Definition}
\label{def_shifted_recurrence}
Let $(X,T)$ be a topological system and $k\in\N$.
We say that a probability measure $\tau \in \M(X^{k+1})$ is \emph{progressive} if for all open sets $U_1,\dots,U_k \subset X$ with
\[
\tau(X\times U_1 \times \cdots \times U_k)>0
\]
there exist infinitely many $n\in\N$ such that
\begin{equation}
\label{eq_shiftedrecurrence}
\tau\big((X\times U_1 \times \cdots \times U_{k-1} \times U_k)\cap \TT^{-n} (U_1 \times U_2 \times \cdots \times U_k \times X) \big)>0.
\end{equation}
\end{Definition}
The notion of a measure being \emph{progressive} depends on the value of $k$ but we omit this from our terminology.

Though we view this as a form of recurrence, it differs from standard notions in several ways.
For example, the measure $\tau$ is not assumed to be $\TT$ invariant, and~\eqref{eq_shiftedrecurrence} is not of the form $\mu(B \cap S^{-n} B$).
While usually there is no distinction between requiring a single return to the set and infinitely many returns, in our setting of distinct sets $U_i$, one can construct choices of the sets showing that the two notions are different.

The next proposition shows that a progressive probability measure produces \Erdos{} progressions.

\begin{Proposition}
\label{prop_progs_from_shifted}
Fix $k \in \N$ and let $(X,T)$ be a topological system.  Let $a \in X$, let $U_1,\dots,U_k\subset X$ be open sets,  and let $\tau \in \M(X^{k+1})$ be a progressive measure satisfying $\tau(\{a\} \times X^k) = 1$.
If $\tau(X \times U_1 \times \cdots \times U_k) > 0$, then there is an \Erdos{} progression $(a,x_1,\dots,x_k)$ with $x_j \in U_j$ for all $1 \le j \le k$.
\end{Proposition}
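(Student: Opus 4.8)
The plan is to build the \Erdos{} progression by a nested sequence of shrinking neighborhoods, using the progressive property to furnish at each stage a return time $n$ together with a point of positive $\tau$-measure witnessing the recurrence in $X^{k+1}$. The key observation is that $\tau(\{a\}\times X^k)=1$ forces every point in the support of $\tau$ to have first coordinate $a$, so producing a point in the support of a positive-measure open box $\{a\}\times V_1\times\cdots\times V_k$ automatically gives the first convergence $T^{c(n)}a\to x_1$ once the $V_1$'s shrink to $x_1$; the remaining convergences $T^{c(n)}x_j\to x_{j+1}$ come from the shifted intersection in~\eqref{eq_shiftedrecurrence}.

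First I would fix a decreasing sequence of open neighborhoods; concretely, for each $\ell\in\N$ let $\mathcal U^{(\ell)}$ range over a countable basis so that it suffices to work with the fixed sets $U_1,\dots,U_k$ at the first step and then refine. Start with $U_j^{(0)}=U_j$. Assuming at stage $\ell$ we have open sets $U_1^{(\ell)},\dots,U_k^{(\ell)}$ with $\tau(X\times U_1^{(\ell)}\times\cdots\times U_k^{(\ell)})>0$, apply the progressive property to obtain $n_{\ell+1}>n_\ell$ (we may demand infinitely many such $n$, hence one larger than the previous choice) with
\[
\tau\big((X\times U_1^{(\ell)}\times\cdots\times U_k^{(\ell)})\cap\TT^{-n_{\ell+1}}(U_1^{(\ell)}\times\cdots\times U_k^{(\ell)}\times X)\big)>0.
\]
This intersection is open and has positive measure, so I can choose a sub-box $X\times U_1^{(\ell+1)}\times\cdots\times U_k^{(\ell+1)}$ contained in it with each $U_j^{(\ell+1)}\subset U_j^{(\ell)}$, of diameter at most $1/(\ell+1)$, and still of positive $\tau$-measure (here one uses that a positive-measure open set contains a positive-measure open subset of arbitrarily small diameter — take a point of positive local measure, or just note the set is a countable union of small boxes). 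The containment in the $\TT^{-n_{\ell+1}}$-preimage ensures $T^{n_{\ell+1}}U_j^{(\ell+1)}\subset U_{j+1}^{(\ell)}$ for $1\le j\le k-1$ and $T^{n_{\ell+1}}U_1^{(\ell+1)}\subset U_1^{(\ell)}$.

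Having run the induction, the nested compact-closure sets $\overline{U_j^{(\ell)}}$ shrink to single points $x_j\in\overline{U_j}$; in fact $x_j\in U_j$ since $\overline{U_j^{(1)}}\subset U_j^{(0)}=U_j$ (arrange the diameters so $\overline{U_j^{(\ell+1)}}\subset U_j^{(\ell)}$). Set $c(\ell)=n_\ell$. For fixed $j$ and all large $\ell$, the point $x_{j-1}$ lies in $U_{j-1}^{(\ell)}$ (with the convention $U_0^{(\ell)}$ playing the role giving $T^{n_{\ell+1}}U_1^{(\ell+1)}\subset U_1^{(\ell)}$ — more cleanly, rewrite the first-coordinate recurrence as the statement about $a$ returning near $x_1$, using $\tau(\{a\}\times X^k)=1$ to locate $a$), hence $T^{c(\ell+1)}x_{j-1}\in U_j^{(\ell)}$, and since $\mathrm{diam}(U_j^{(\ell)})\to0$ and $x_j\in\bigcap_\ell U_j^{(\ell)}$ we get $T^{c(\ell)}x_{j-1}\to x_j$ as $\ell\to\infty$. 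This gives~\eqref{eq_erdosprogressiondef}, so $(a,x_1,\dots,x_k)$ is an \Erdos{} progression with $x_j\in U_j$ as required.

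The main obstacle is purely bookkeeping: one must thread the single-coordinate return for $a$ through the same sequence $(n_\ell)$ that governs the returns $x_{j-1}\mapsto x_j$, i.e. verify that the \emph{same} $\TT^{-n}$ in Definition~\ref{def_shifted_recurrence} simultaneously handles the $X$-coordinate (which encodes $a\to x_1$ via $T^n a$ landing in $U_1^{(\ell)}$, using that $\tau$ sits on $\{a\}\times X^k$) and all the coordinate shifts $U_j\to U_{j+1}$. Once the indexing is set up correctly this is immediate from the definition; the only genuine point to be careful about is that the progressive property yields \emph{infinitely many} valid $n$, which is exactly what lets us keep $n_\ell$ strictly increasing and hence produce a legitimate strictly increasing sequence $c$.
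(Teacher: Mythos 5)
Your construction is in substance the paper's own proof: repeatedly invoke the progressive property to get strictly increasing return times, shrink positive-measure open sets whose closures nest inside the previous stage, use $\tau(\{a\}\times X^k)=1$ to pin the zeroth coordinate to $a$, and read the convergences off the containments; the only cosmetic difference is that the paper shrinks arbitrary open subsets of $X^k$ (small balls around a support point of the intersection), while you work with product boxes obtained by decomposing the open intersection into countably many small boxes.

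One step is misstated, though the fix is the one you yourself gesture at. A box of the form $X\times U_1^{(\ell+1)}\times\cdots\times U_k^{(\ell+1)}$ can never be contained in the intersection in~\eqref{eq_shiftedrecurrence}, because the $\TT^{-n_{\ell+1}}$-preimage constrains the zeroth coordinate to lie in $T^{-n_{\ell+1}}U_1^{(\ell)}$; correspondingly, the inclusion $T^{n_{\ell+1}}U_1^{(\ell+1)}\subset U_1^{(\ell)}$ that you list does not follow from your construction (and is not what is needed). The repair: choose a small box $W_0\times U_1^{(\ell+1)}\times\cdots\times U_k^{(\ell+1)}$ of positive $\tau$-measure contained in the open intersection, which exists by your countable-union-of-boxes remark. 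Positivity of its measure together with $\tau(\{a\}\times X^k)=1$ forces $a\in W_0$, and the containment in the $\TT^{-n_{\ell+1}}$-preimage then yields exactly the first-coordinate statement you need, namely $T^{n_{\ell+1}}a\in U_1^{(\ell)}$, alongside $T^{n_{\ell+1}}U_j^{(\ell+1)}\subset U_{j+1}^{(\ell)}$ for $1\le j\le k-1$. With that correction (and your arrangement $\overline{U_j^{(\ell+1)}}\subset U_j^{(\ell)}$, which also guarantees $x_j\in U_j$ rather than merely $x_j\in\overline{U_j}$), the limit-point argument and the verification of~\eqref{eq_erdosprogressiondef} go through exactly as in the paper.
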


\begin{proof}
Let $V=U_1 \times \cdots \times U_k$.
Using the assumption that $\tau$ is progressive,  we can find $c(1) \in \N$ such that
\[
\tau \big( (X \times V) \cap \TT^{-c(1)} (V \times X) \big) > 0.
\]
Since $V$ is open, it follows that the intersection above contains some point $(v_0,v_1,\dots,v_k)$ in the support of $\tau$.
From the assumption that the closed set $\{a\}\times X^k$ has full measure, it follows that $v_0=a$.
Letting $V_1\subset X^k$ be an open ball centered at $(v_1,\dots,v_k)$ with a sufficiently small radius, we have all the following properties.
\begin{itemize}
\item
$\tau( \{a\} \times V_1) > 0$
\item
$\operatorname{diam}(V_1) \le \frac12\operatorname{diam}(V)$
\item
$\{a\}\times \overline{V_1}\subset(\{a\}\times V)\cap \TT^{-c(1)} (V \times X)$
\end{itemize}
We proceed inductively, setting $V_0=V$ and constructing, for each $n \in \N$, an open set $V_{n}\subset X^k$ and a recurrence time $c(n)\in\N$ satisfying the following properties:
\begin{enumerate}[label=(\roman{enumi}),ref=(\roman{enumi}),leftmargin=*]
\item
\label{itm_pr_1}
$\displaystyle \overline{V_n}\subset V_{n-1}\qquad\text{ and }\qquad\operatorname{diam}(V_n) \le \frac12\operatorname{diam}(V_{n-1})$;
\item
\label{itm_pr_2}
$\tau( \{a\}  \times V_n)>0$;
\item
\label{itm_pr_4}
$\{a\}\times \overline{V_n} \subset \TT^{-c(n)} (V_{n-1}\times X)$;
\item
$c(n+1) > c(n)$.
\end{enumerate}
The case $n=1$ is already established.
The inductive step is the same: using~\ref{itm_pr_2} and the hypotheses on $\tau$ we can find $c(n+1) > c(n)$ for which
\[
(\{a\} \times V_n) \cap \TT^{-c(n+1)} (V_n \times X)
\]
has positive $\tau$ measure.
Since $V_n$ is open, it follows that there exists a point in this intersection that belongs to the support of $\tau$.
By intersecting a small ball around that point with $\{a\}\times X^k$, we can find an open set $V_{n+1}\subset X^k$ satisfying properties~\ref{itm_pr_1}--\ref{itm_pr_4}.

We now produce the desired \Erdos{} progression.
Using property~\ref{itm_pr_1}, the intersection
\[
\bigcap_{n=1}^\infty \{a\} \times \overline{V_n}
\]
is a singleton.  Calling this point $(a,x_1,\dots,x_k)$, then property~\ref{itm_pr_4} gives that
\[
\TT^{c(n)} (a,x_1,\dots,x_{k-1})
\in
V_{n-1}
\]
for all $n \in \N$.
It thus follows that
\[
\lim_{n \to \infty} \TT^{c(n)}(a,x_1,\dots,x_{k-1}) = (x_1,x_2,\dots,x_k),
\]
as desired.
\end{proof}

We remark that the \Erdos{} progression found in the proof of \cref{prop_progs_from_shifted} is in the support of $\tau$. Therefore it follows that the support of $\tau$ contains a dense set of \Erdos{} progressions.
In our earlier work~\cite{KMRR2} we constructed a measure which was not only progressive, but had the property that almost every $x\in X^3$ is an \Erdos{} progression.
Due to the reliance on \cref{prop_progs_from_shifted} in the current work, we do not know if the measure we produce has this stronger property.
It would be interesting to know whether the conclusion of \cref{prop_progs_from_shifted} can be strengthened to match earlier work.

Using \cref{prop_progs_from_shifted}, we can now derive \cref{thm_maindynamical} from the following result.

\begin{Theorem}
\label{thm_shifted_recurrence_happens}
Let $(X,\mu,T)$ be an ergodic measure preserving system.
For every $k \in \N$, every \Folner{} sequence $\Phi$, every $a \in \gen(\mu,\Phi)$, and every open set $E \subset X$ with $\mu(E) > 0$, there exists an integer $t \ge 0$ and a progressive measure $\sigma \in \M(X^{k+1})$ such that
\begin{equation}
\label{eqn:sigma_assumption}
\sigma(X \times T^{-t} E \times \cdots \times T^{-t} E) > 0
\end{equation}
and $\sigma(\{a\} \times X^k) = 1$.
\end{Theorem}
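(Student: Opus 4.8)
The plan is to construct the measure $\sigma$ explicitly as a combination of a "structured" piece coming from the nilfactors of $(X,\mu,T)$ and a "random" piece controlled by uniformity norms, as sketched in the outline. First I would recall the machinery of the Host--Kra factors: for each $s \ge 1$ there is a factor $(Z_s,\mu_s,T)$ of $(X,\mu,T)$ which is an inverse limit of $s$-step nilsystems, and the associated conditional expectation operators $\mathbb{E}[\,\cdot\mid Z_s]$. The key heuristic, which I would want to make precise, is that for the recurrence in \eqref{eq_shiftedrecurrence} only finitely many "levels" of structure matter: the relevant multilinear averages are controlled by a uniformity seminorm of bounded degree depending only on $k$, so it suffices to work over a factor $Z=Z_s$ with $s=s(k)$, which is an inverse limit of nilsystems, and then (by a standard approximation/compactness argument) over a single nilsystem.

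The core of the argument is then the construction on a nilsystem. Suppose first $X=G/\Gamma$ is a nilmanifold and $T$ is translation by a fixed element. Here I would use the algebraic description of Erdős progressions: the set of $(k+1)$-tuples $(a,x_1,\dots,x_k)$ satisfying \eqref{eq_erdosprogressiondef} along some subsequence is (up to closure) a coset of a closed subgroup $H \le G^{k+1}$, and the candidate progressive measure is the Haar measure on an appropriate sub-nilmanifold $H\cdot(a,a,\dots,a)/(\text{stabilizer})$. The verification that this Haar measure is progressive reduces — by the standard fact that orbits in nilsystems are equidistributed and that projections of nilmanifolds to their maximal torus factor detect everything up to measure zero — to checking the recurrence property \eqref{eq_shiftedrecurrence}, which in turn becomes a question about simultaneous density of the sequence $n \mapsto (T^n\text{-translates})$ in a sub-nilmanifold. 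This is where I expect to invoke a Furstenberg-type multiple recurrence statement; the excerpt explicitly flags a "novel extension of Furstenberg's multiple recurrence theorem" (\cref{thm_weirdgeneralSzemeredi}), so I would set up the nilsystem computation precisely so that this extension applies, using unique ergodicity of the relevant sub-nilsystem to upgrade "positive measure of return" to "return for a positive-density, hence infinite, set of $n$".

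To get the full ergodic system from the nilsystem case, I would define $\sigma$ on $X^{k+1}$ by taking a relatively independent-type joining: disintegrate over the nilfactor $Z=Z_s$, put the structured (Haar) measure on the $Z$-coordinates, and take the product (relatively independent) measure on the fibers — this is the "random component" and it is where weak-mixing-type randomness, quantified by the $\mathsf{U}^s$-norms, makes the product measure progressive over the fibers. The integer $t \ge 0$ and the positivity \eqref{eqn:sigma_assumption} come from applying the nilfactor analysis to the open set $E$: since $a \in \gen(\mu,\Phi)$ the orbit of $a$ equidistributes toward $\mu$, so some shift $T^{-t}E$ meets the relevant sub-nilmanifold in positive measure, giving $\sigma(X\times T^{-t}E\times\cdots\times T^{-t}E)>0$; the condition $\sigma(\{a\}\times X^k)=1$ is arranged by construction, since we build $\sigma$ fibered over the point $a$ in the first coordinate.

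The main obstacle, as the authors themselves note, is not the nilsystem case per se but the reduction to it: one must show that the progressive property is insensitive to the "infinite-step" and "non-structured" parts of the system, i.e. that controlling finitely many uniformity norms suffices to make the random component of $\sigma$ behave like a genuine product measure for the purposes of \eqref{eq_shiftedrecurrence}. Making this quantitative — identifying the correct finite degree $s(k)$, proving the requisite $\mathsf{U}^{s}$-estimate for the multilinear expression defining the return sets, and handling the passage from an inverse limit of nilsystems to a single one uniformly — is the technical heart, and it is presumably exactly what occupies \cref{sec_proof_of_sigma_shifted}.
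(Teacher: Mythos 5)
Your proposal reproduces the paper's high-level strategy (structured measure on the pronilfactor, relatively independent randomness on the fibers, $\delta_a$ in the zeroth coordinate), but the decisive steps are left as black boxes and at least one essential ingredient is missing. First, your construction ``fibered over the point $a$'' is not well defined as stated: the pronilfactor map $\pi_{k-1}\colon X\to Z_{k-1}$ is a priori only a measurable map defined $\mu$-almost everywhere, so neither $\pi_{k-1}(a)$ nor a disintegration over the single point $a$ has meaning. The paper handles this by first passing, via \cite[Proposition 5.7 and Lemma 5.8]{KMRR}, to an extension with \emph{topological} pronilfactors (\cref{def:toppronil}) together with a lift $\tilde a$ of $a$ that is generic for the extended measure along a suitable \Folner{} sequence, building $\tilde\sigma$ there (\cref{def_tausigma}), and pushing forward; your sketch never addresses this, and without it the object $\sigma$ does not exist. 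Second, your route to the positivity \eqref{eqn:sigma_assumption} (``the orbit of $a$ equidistributes, so some shift $T^{-t}E$ meets the sub-nilmanifold in positive measure'') is not a proof: the actual mechanism is to show that the $\Phi$-averages of $\int \TT^n(\one\otimes g\otimes\cdots\otimes g)\d\sigma$ equal the corresponding integral against the Furstenberg joining (\cref{sigma_diagonal_average_furstenberg}, which itself needs unique ergodicity of an auxiliary $\Z^2$-pronilsystem to swap two limits) and then invoke Furstenberg's multiple recurrence theorem; equidistribution of the single orbit of $a$ alone does not yield it.

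Third, for the core claim that $\sigma$ is progressive (\cref{thm_sigma_shifted}) you propose an algebraic verification on a single nilsystem plus an unspecified transfer ``because the random component behaves like a product measure,'' and you explicitly defer the transfer. But that transfer \emph{is} the theorem: the paper's key idea, which your sketch does not identify, is the coordinate-shifting comparison of \cref{second_main_theorem_about_sigma}, namely that $\frac1{|\Phi_N|}\sum_{n\in\Phi_N}\TT^n(G\otimes\one)$ and $\frac1{|\Phi_N|}\sum_{n\in\Phi_N}\TT^n(\one\otimes G)$ agree in $L^2(\sigma)$ in the limit (proved with the $U^k$ seminorm machinery, the marginal bounds $\sigma_i\le i\mu$, and a correlation lemma exploiting continuity of $\pi_{k-1}$ at the orbit of $a$). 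This converts progressivity into a self-correlation average of $\one\otimes G$, to which \cref{thm_weirdgeneralSzemeredi} is applied with $u=(1,\dots,1)$, $v=(1,\dots,k)$ to the marginal $\nu$ of $\sigma$ on the last $k$ coordinates, which by \cref{lemma_sigmainvariance} is $T\times T^2\times\cdots\times T^k$-invariant --- note that this recurrence theorem is used on the original (non-nil) system, not inside a nilsystem computation as you propose, and crucially requires the product-set structure of the target set. Also, no reduction from pronilsystems (inverse limits) to single nilmanifolds is needed or made; unique ergodicity of pronilsystems and of orbit closures therein suffices. As it stands, your proposal restates the announced outline but has a genuine gap at each of the three places where the proof actually happens.
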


\begin{proof}[Proof that \cref{thm_shifted_recurrence_happens} implies \cref{thm_maindynamical}]
We apply \cref{prop_progs_from_shifted} with $U_j = T^{-t} E$ for all $1 \le j \le k$ and $\tau = \sigma$.
The hypotheses of \cref{thm_shifted_recurrence_happens} allow us to conclude from \cref{prop_progs_from_shifted} that there is an \Erdos{} progression $(a,x_1,\dots,x_k)$ with $x_j \in U_j$ for all $1 \le j \le k$.
\end{proof}

\section{Constructing a progressive  measure}
\label{sec_constructingsigma}

\subsection{Two motivating examples}

To motivate our approach to constructing a progressive measure, we consider two special cases that represent complementary behaviors: irrational rotations and weakly mixing systems.

\begin{example}
\label{eg:tau rotation}
Let $X=\T=\R/\Z$ be the unit circle and let $\alpha\in\R$ be irrational. The map  $T\colon X\to X$ given by $Tx = x+\alpha\mod  1$  is ergodic with respect to the Haar measure on $\T$.
In this case, $(a,x_1,\dots,x_k)$ is an \Erdos{} progression if and only it is an arithmetic progression in $\T$, meaning that
\[
(a,x_1,\dots,x_k) = (a, a + \beta, \dots, a + k\beta)
\]
for some $\beta \in\T$.
In particular, the set of all \Erdos{} progressions starting at $a$ is a subtorus of $\T^{k+1}$.
Writing $\sigma$ for the Haar measure on this subtorus, we can describe $\sigma$ dynamically via the formula
\[
\sigma = \lim_{N \to \infty} \dfrac{1}{N} \sum_{n=1}^N \delta_a \times \delta_{T^na} \times \dots \times \delta_{T^{kn}a}
\]
with the limit taken in the weak* topology.
Open sets $U_1,\dots,U_k\subset\T$ satisfy
\[
\sigma(\T\times U_1\times\cdots\times U_k)>0
\]
precisely when there exists $\beta\in\T$ with $a+i\beta\in U_i$ for  $i=1,\dots,k$.
In this case, choosing an integer $n$ such  that $n\alpha\approx\beta$, we have that $a+i\beta\in U_i\cap T^{-n}U_{i+1}$ for all $i= 1,\dots,k-1$.
This in turn implies that
\[
\sigma\Big(\big(\T\times U_1\times\cdots\times U_k\big)\cap \TT^{-n}\big(U_1\times\cdots\times U_k\times\T\big)\Big)>0
\]
showing that $\sigma$ is progressive.
\end{example}

In view of this example, a natural candidate for $\sigma$ in the general setting is
\begin{equation}
\label{eq_naivesigma}
\sigma = \lim_{N \to \infty} \dfrac{1}{|\Phi_N|} \sum_{n \in \Phi_N} \delta_a \times \delta_{T^na} \times \cdots \times \delta_{T^{kn}a}
\end{equation}
where $\Phi$ is a \Folner{} sequence along which the limit exists.
where one would pass to a subsequence of $(\Phi_N)_{N \in \N}$ if necessary to guarantee the existence of the limit.
Unfortunately, even in the complementary case of weakly mixing systems, we are unable to prove that the measure defined by~\eqref{eq_naivesigma} is progressive.
It would be interesting to know whether it always is.
This makes it necessary for us to take a different approach for weakly mixing systems, illustrated by the following example.

\begin{example}\label{eg:tau_weak_mixing}
If $(X,\mu,T)$ is weakly mixing, then we can take $\sigma$ to be the product measure $\sigma=\delta_a\times\mu\times\cdots\times\mu\in\M(X^{k+1})$.
To verify that $\sigma$ is progressive, fix $U_1,\dots,U_k\subset X$ with $\mu(U_i)>0$ for each $i$.
Weak mixing implies each of the sets $\{n\in\N:\mu(U_i\cap T^{-n}U_{i+1})>0\}$ has full density with respect to every \Folner{} sequence.
Setting $U_0=U_{k+1}:=X$ for convenience, we conclude that
\[
\begin{split}
\big\{ n \in \N : \sigma\bigl((U_0\times\cdots\times U_k)\cap\TT^{-n}(U_1\times\cdots & \times U_{k+1})\bigr) > 0 \big\}
\\
&\supset
\bigcap_{i=0}^k
\{ n \in \N : \mu(U_i\cap T^{-n}U_{i+1}) > 0 \}
\end{split}
\]
has full density with respect to every \Folner{} sequence.
This shows that $\sigma$ is progressive and hence satisfies the conclusion of
\cref{thm_shifted_recurrence_happens}.
In fact, one can prove that for $\mu\times\cdots\times\mu$-almost every $(x_1,\ldots,x_k)\in X^k$ the point $(a,x_1,\ldots,x_k)$ is an \Erdos{} progression;
when $\mu$ is weakly mixing with respect to $T$, it follows that $\mu\times\cdots\times\mu$ is ergodic with respect to $\TT$ and hence almost every point is both generic for and in the support of this measure.
\end{example}

To define $\sigma$ in general, we construct a measure that combines structured behavior, like that of \cref{eg:tau rotation}, with mixing behavior, like that of \cref{eg:tau_weak_mixing}. To make this precise, we use the notion of pronilfactors.

\subsection{Pronilfactors}
Let $(X,\mu,T)$ and $(Y,\nu,S)$ be measure preserving systems. We say that $(Y,\nu,S)$ is a \define{factor} of $(X,\mu,T)$ if there exists an almost surely defined and measurable map $\pi\colon X\to Y$ called the \define{factor map} such that $\pi\circ T=S\circ \pi$ holds $\mu$-almost everywhere and $\pi\mu=\nu$.
Of particular importance for us are the factors which are nilsystems.

\begin{Definition}[Nilsystems, pronilsystems]
\label{def:nil}
Let $G$ be an $s$-step nilpotent Lie group and let $\Gamma\subset G$ be a discrete and cocompact subgroup.
The compact manifold $X = G/\Gamma$ is an \emph{$s$-step nilmanifold}.

The group $G$ acts on $X$ by left translation, and there is a unique Borel probability measure $\mu$ on $X$ that is invariant under this action, the \define{Haar measure} on $X$.
Letting $T\colon X\to X$ denote left translation by a fixed element of $G$, the resulting measure preserving system $(X, \mu, T)$ is called an \emph{$s$-step nilsystem} (as usual endowed with the Borel $\sigma$-algebra).

An inverse limit of $s$-step nilsystems is called an \emph{$s$-step pronilsystem.}
\end{Definition}

We note that we can view the inverse limit in the category of measure preserving systems or in the category of topological dynamical systems, as both  describe the same system.

A topological system $(X,T)$ is \define{uniquely ergodic} if there is a unique $T$-invariant measure. A pronilsystem is uniquely ergodic if and only if every point is generic for $\mu$ along every \Folner{} sequence.
It is a classical result that for all $s$-step pronilsystems $(X,\mu, T)$ the properties (i) minimal; (ii) transitive; (iii) ergodic; (iv) uniquely ergodic; are equivalent.
(See~\cite[Theorem~11, Chapter~11]{HK-book} for a summary and discussion.)
Moreover, every orbit closure in a pronilsystem supports a unique invariant measure (see~\cite{lesigne-91, leibman-05}).

For each $s\in\N$, every ergodic system $(X,\mu,T)$ has a maximal factor that is isomorphic to an $s$-step pronilsystem (see~\cite[Chapter 16]{HK-book}); this system is called  the \define{$s$-step pronilfactor} of $(X,\mu,T)$ and is denoted $(Z_s,m_s,T)$.
By a standard abuse of notation we use $T$ to denote the transformation not only in the system but also in the pronilfactor.

\subsection{Constructing the measure $\sigma$}
\label{sec_def_sigma}

We are now ready to construct the probability measure $\sigma$ on $X^{k+1}$ that we later show to be progressive.
This is carried out in two steps: first we define a progressive measure on $Z^{k+1}_{k-1}$ where $(Z_{k-1},m_{k-1},T)$ is the $(k-1)$-step pronilfactor of the given system $(X,\mu,T)$, and then we lift this to a measure on $X^{k+1}$.

For the first step, when $(X,\mu,T)$ is a pronilsystem, the measure $\sigma$ described in~\eqref{eq_naivesigma} is progressive; this follows from \cref{thm_sigma_shifted} below but a direct proof is more involved than the special case in \cref{eg:tau rotation} of a rotation.

For the second step, we need to overcome the obstacle that the definition of $\sigma$ in~\eqref{eq_naivesigma} depends on the single point $a\in\gen(\mu,\Phi)$, but the map $\pi_s\colon X\to Z_s$ to the $s$-step pronilfactor is, a priori, only a measurable map and only defined almost everywhere.
To remedy this issue, we restrict attention to systems where the map $\pi$ is continuous.

\begin{Definition}
\label{def:toppronil}
We say that $(X, \mu, T)$ \define{has topological pronilfactors} if for every $s\in\N$, there is a continuous factor map $\pi_s\colon X\to Z_s$, where $(Z_s, m_s, T)$ denotes the $s$-step pronilfactor of $(X, \mu, T)$.
\end{Definition}
Fortunately, it is possible to reduce \cref{thm_shifted_recurrence_happens} to the case when $(X,\mu,T)$ has topological pronilfactors.
This maneuver has been used in related situations, such as~\cite{HK09}, and was first adapted to this setting by Host~\cite{Host20}.
See~\cite[Section 5]{KMRR} for a more complete discussion. The reduction of \cref{thm_shifted_recurrence_happens} to the case of topological pronilfactors is proven in the next section

If $(Y,\nu,S)$ is a factor of $(X,\mu,T)$ with factor map $\pi\colon X\to Y$, then the \define{conditional expectation} of a function of $f\in L^2(\mu)$ onto $Y$, denoted $\E(f \mid Y)$, is the unique function in $L^2(\nu)$ such that
\[
\int_{\pi^{-1}(B)} f\d\mu = \int_B \E(f \mid Y)\d\nu
\]
for all Borel measurable sets $B\subset Y$.

Using this, we can define the measure $\sigma$.

\begin{Definition}
\label{def_tausigma}
Let $(X,\mu,T)$ be an ergodic system with topological pronilfactors and fix $k\in\N$.
Let $(Z_{k-1},m_{k-1},T)$ denote the $(k-1)$-step pronilfactor of $(X,\mu,T)$ and let $\pi_{k-1}\colon X\to Z_{k-1}$ denote the corresponding continuous factor map.
Given a \Folner{} sequence $(\Phi_N)_{N \in \N}$ and $a \in \gen(\mu,\Phi)$ define the measure $\xi\in\M(Z_{k-1}^k)$ by setting
\begin{equation}
\label{def:xi}
\xi
=
\lim_{N \to \infty} \dfrac{1}{|\Phi_N|} \sum_{n \in \Phi_N}
\delta_{T^n(\pi_{k-1} a)}\otimes\delta_{T^{2n}(\pi_{k-1} a)}\otimes\cdots\otimes\delta_{T^{kn}(\pi_{k-1} a)}.
\end{equation}
We then define the measure $\sigma\in\M(X^{k+1})$ via the formula
\begin{equation}
\label{define:sigma}
\int_{X^{k+1}} f_0 \otimes f_1 \otimes\cdots\otimes f_k\d\sigma
=
f_0(a)\int_{Z_{k-1}^{k}}\E(f_1\mid Z_{k-1})\otimes\dots\otimes\E(f_k\mid Z_{k-1})\d\xi
\end{equation}
for all continuous functions $f_0,f_1,\dots,f_k \colon X \to \C$.
\end{Definition}

The limit in~\eqref{def:xi} exists and is in fact independent of $\Phi$, since $(Z_{k-1}^k,T\times T^2\times\cdots\times T^k)$ is a topological pronilsystem and hence every point has a uniquely ergodic orbit closure.
It follows from this that
$\xi$ is the unique measure on $\M(Z_{k-1}^k)$ such that $(\pi_{k-1}(a),\dots,\pi_{k-1}(a))$ is a generic point (along $\Phi$) with respect to the transformation $T\times T^2\times\cdots\times T^k$

We stress that the measures $\xi$ and $\sigma$ depend on the choice of the point $a$, as well as the system $(X,\mu,T)$ and the positive integer $k$.
For ease of reading, we omit these dependencies from the notation.
Note also that while $\xi$ is defined on a $k$-fold product space indexed by $\{1,\dots,k\}$,  the measure $\sigma$ is defined on a $(k+1)$-fold product space indexed by $\{0,1,\dots,k\}$ and distinguishes the zeroth coordinate.

For a measure $\tau\in\M(X^k)$ and each $i = 1, \dots ,k$, we let $\tau_i$ denote the \emph{$i$-th marginal} of $\tau$, meaning the projection of $\tau$ onto the $i$-th coordinate.
Given two measures $\tau, \tau'$ defined on the same space, we say that $\tau\leq \tau'$ if $\tau(A)\leq \tau'(A)$ for all measurable sets $A$.

\begin{Lemma}
\label{lem_sigma_marginals}
Fix an ergodic system $(X,\mu,T)$ with topological pronilfactors, an integer $k \in \N$, a \Folner{} sequence $(\Phi_N)_{N \in \N}$, and a point $a \in \gen(\mu,\Phi)$.
For every $1 \le i \le k$, the $i$-th marginal $\sigma_i$ of the measure $\sigma$ in \cref{def_tausigma} satisfies $\sigma_i \le i \mu$.
\end{Lemma}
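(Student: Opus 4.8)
The plan is to push the inequality down to the $(k-1)$-step pronilfactor $Z_{k-1}$, where it becomes a statement about a single averaged orbit, and then to close it using the unique ergodicity of the pronilsystem. First I would unwind \cref{def_tausigma}. Recall that $\sigma\in\M(X^{k+1})$ has coordinates indexed by $\{0,1,\dots,k\}$, so that for $1\le i\le k$ the marginal $\sigma_i$ is the push-forward of $\sigma$ under the projection to coordinate $i$. Testing against a non-negative continuous $f\colon X\to\R$ via \eqref{define:sigma} with $f_i=f$ and $f_j=1$ for every $j\ne i$, and using $\E(1\mid Z_{k-1})=1$ and $f_0(a)=1$, this collapses to
\[
\int_X f\d\sigma_i=\int_{Z_{k-1}}\E(f\mid Z_{k-1})\d\xi_i ,
\]
where $\xi_i\in\M(Z_{k-1})$ is the $i$-th marginal of the measure $\xi$ from \eqref{def:xi}. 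Since $\int_X f\d\mu=\int_{Z_{k-1}}\E(f\mid Z_{k-1})\d m_{k-1}$ by the definition of conditional expectation, and since an inequality between finite Borel measures on the compact metric space $X$ can be checked on non-negative continuous functions (Riesz representation), it suffices to prove $\xi_i\le i\,m_{k-1}$.

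To establish this, I would first record that $\xi_i=\lim_N\frac1{|\Phi_N|}\sum_{n\in\Phi_N}\delta_{T^{in}(\pi_{k-1}a)}$, since marginals are continuous for the weak* topology, and that $\xi_i$ is $T^i$-invariant, being the $i$-th marginal of $\xi$, which is a generic — hence invariant — measure for $T\times T^2\times\cdots\times T^k$ on $Z_{k-1}^k$. Consequently $\frac1i\sum_{j=0}^{i-1}(T^j)_*\xi_i$ is a $T$-invariant Borel probability measure on $Z_{k-1}$. As $(Z_{k-1},m_{k-1},T)$ is a factor of the ergodic system $(X,\mu,T)$ it is ergodic, and being a pronilsystem it is therefore uniquely ergodic, so this average must equal $m_{k-1}$; that is, $i\,m_{k-1}=\sum_{j=0}^{i-1}(T^j)_*\xi_i$. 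Since $\xi_i=(T^0)_*\xi_i$ is one of the $i$ non-negative summands on the right, we conclude $\xi_i\le i\,m_{k-1}$, which by the reduction of the first paragraph gives $\sigma_i\le i\mu$.

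The only substantive ingredient is the identity $m_{k-1}=\frac1i\sum_{j=0}^{i-1}(T^j)_*\xi_i$ — this is exactly where the factor $i$ in the statement is produced — and it reduces to $T$-invariance of the averaged measure together with unique ergodicity of $Z_{k-1}$; everything else is routine unwinding of \eqref{define:sigma} and standard facts (marginals of invariant measures are invariant under the corresponding coordinate map, the conditional-expectation identity, and Riesz representation). I do not expect a genuine obstacle here; the points needing a little care are the indexing convention for $\sigma$ (coordinates $\{0,1,\dots,k\}$, with $\sigma_0=\delta_a$) and keeping straight that $\xi_i$ is $T^i$-invariant rather than $T$-invariant, so that the averaging over $\{T^j:0\le j<i\}$ is what restores $T$-invariance.
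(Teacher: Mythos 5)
Your argument is correct and follows essentially the same route as the paper: reduce via \eqref{define:sigma} to the bound $\xi_i \le i\,m_{k-1}$, and obtain that bound from the $T^i$-invariance of $\xi_i$ together with unique ergodicity of $(Z_{k-1},T)$. The only difference is cosmetic — you spell out the averaging of $(T^j)_*\xi_i$ over $0\le j<i$ and test against continuous functions rather than indicators, which the paper leaves implicit.
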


\begin{proof}
Note that $\xi_i$, the $i$-th marginal of the measure $\xi$ from~\eqref{def:xi}, is $T^i$-invariant.  Since the system $(Z_{k-1},T)$ is uniquely ergodic, we have that  $\xi_i\leq i m_{Z_{k-1}}$ and hence
\begin{equation}
\label{eqn_ioxi}
\int_{Z_{k-1}}g\d\xi_i\leq i\int_{Z_{k-1}}g\d m_{Z_{k-1}}
\end{equation}
whenever $g\colon Z_{k-1}\to[0,1]$ is Borel measurable.
Therefore, for any Borel set $A\subset X$,  we have
\[\sigma_i(A)
=
\int_{Z_{k-1}}\E(\one_A\mid Z_{k-1})\d\xi_i
\leq
i\int_{Z_{k-1}}\E(\one_A\mid Z_{k-1})\d m_{Z_{k-1}}
=
i\mu(A).\quad\qedhere
\]
\end{proof}

We also record the following which follows directly from the definition. \begin{lemma}\label{lemma_sigmainvariance}
The measure $\sigma$ defined in \cref{def_tausigma} is invariant under $\Id\times T\times T^2\times\cdots\times T^k$.
\end{lemma}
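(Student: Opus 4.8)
The plan is to verify invariance directly from the defining formula~\eqref{define:sigma}, testing against product functions. Write $S=\Id\times T\times T^2\times\cdots\times T^k$ for the transformation on $X^{k+1}$ and $R=T\times T^2\times\cdots\times T^k$ for the transformation on $Z_{k-1}^k$. Since finite linear combinations of functions $f_0\otimes f_1\otimes\cdots\otimes f_k$ with $f_0,\dots,f_k\in C(X)$ are dense in $C(X^{k+1})$ by Stone--Weierstrass, it suffices to check that $\int f_0\otimes\cdots\otimes f_k\d(S\sigma)=\int f_0\otimes\cdots\otimes f_k\d\sigma$ for all such functions. As each $f_i\circ T^i$ is again continuous, the left-hand side equals $\int f_0\otimes(f_1\circ T)\otimes\cdots\otimes(f_k\circ T^k)\d\sigma$, which by~\eqref{define:sigma} is $f_0(a)\int_{Z_{k-1}^k}\E(f_1\circ T\mid Z_{k-1})\otimes\cdots\otimes\E(f_k\circ T^k\mid Z_{k-1})\d\xi$.

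Next I would use that conditional expectation onto $Z_{k-1}$ is $T$-equivariant: because the continuous factor map $\pi_{k-1}$ intertwines $T$ on $X$ with $T$ on $Z_{k-1}$, one has $\E(f_i\circ T^i\mid Z_{k-1})=\E(f_i\mid Z_{k-1})\circ T^i$ as elements of $L^2(m_{k-1})$. The one point requiring care is that this is only an $m_{k-1}$-almost everywhere identity, whereas it is being integrated against $\xi$; but the $i$-th marginal $\xi_i$ of $\xi$ is $T^i$-invariant and $(Z_{k-1},T)$ is uniquely ergodic, so $\xi_i\le i\, m_{k-1}$ (this is exactly the estimate used in the proof of \cref{lem_sigma_marginals}), hence $\xi_i\ll m_{k-1}$ and the substitution is legitimate $\xi$-almost everywhere. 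Writing $g_i=\E(f_i\mid Z_{k-1})$, the integrand then becomes $(g_1\otimes\cdots\otimes g_k)\circ R$, so the left-hand side equals $f_0(a)\int_{Z_{k-1}^k}(g_1\otimes\cdots\otimes g_k)\circ R\d\xi$.

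Finally, $\xi$ is $R$-invariant: it is the unique invariant measure on the orbit closure of $(\pi_{k-1}a,\dots,\pi_{k-1}a)$ in the pronilsystem $(Z_{k-1}^k,R)$, as recorded just after \cref{def_tausigma} (equivalently, $R$-invariance of the averaging limit~\eqref{def:xi} is immediate from the \Folner{} property of $\Phi$). Hence $\int (g_1\otimes\cdots\otimes g_k)\circ R\d\xi=\int g_1\otimes\cdots\otimes g_k\d\xi$, and multiplying by $f_0(a)$ and comparing with~\eqref{define:sigma} gives $\int f_0\otimes\cdots\otimes f_k\d(S\sigma)=\int f_0\otimes\cdots\otimes f_k\d\sigma$, which is what we want. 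There is no genuine obstacle here; the only subtlety is the absolute-continuity point noted above, needed to move the $m_{k-1}$-a.e.\ equivariance of conditional expectation inside the integral against $\xi$, and the remainder is bookkeeping.
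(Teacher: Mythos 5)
Your proof is correct, and it takes exactly the route the paper intends: the paper records this lemma without an explicit argument (``follows directly from the definition''), and your verification against product functions --- pushing the transformation through \eqref{define:sigma}, using $T$-equivariance of $\E(\,\cdot\mid Z_{k-1})$, and invoking the $T\times T^2\times\cdots\times T^k$-invariance of $\xi$ --- is the natural spelling-out of that remark. Your attention to the absolute-continuity point ($\xi_i\le i\,m_{k-1}$, so the $m_{k-1}$-a.e.\ identity may be used under the $\xi$-integral) is a genuine and correctly handled detail rather than a gap.
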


\subsection{Outlining the proof of \cref{thm_shifted_recurrence_happens}}

To prove \cref{thm_shifted_recurrence_happens}, we need to verify that $\sigma$ has the required properties, which are captured by the following two theorems.

\begin{Theorem}
\label{thm_sigma_sz_thing}
Fix an ergodic system $(X,\mu,T)$ with topological pronilfactors.
Let $k \in \N$, let $(\Phi_N)_{N \in \N}$ be a \Folner{} sequence, let $a \in \gen(\mu,\Phi)$, and let $\sigma$ be the measure described in \cref{def_tausigma}.
Then for every open set $E \subset X$ with $\mu(E) > 0$, there is an integer $t \ge 0$ such that  $\sigma(X \times T^{-t} E \times \cdots \times T^{-t} E) > 0$.
\end{Theorem}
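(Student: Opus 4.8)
The plan is to reduce the positivity statement $\sigma(X\times T^{-t}E\times\cdots\times T^{-t}E)>0$ for some $t\ge 0$ to a multiple recurrence statement on the pronilfactor $Z_{k-1}$, and then to invoke a suitable (possibly enhanced) version of Furstenberg's multiple recurrence theorem there. First I would unwind the definition of $\sigma$ in \cref{def_tausigma}: by~\eqref{define:sigma}, testing $\sigma$ against $\one_X\otimes\one_{T^{-t}E}\otimes\cdots\otimes\one_{T^{-t}E}$ gives
\[
\sigma(X\times T^{-t}E\times\cdots\times T^{-t}E)
=
\int_{Z_{k-1}^{k}} g\otimes\cdots\otimes g \d\xi,
\qquad
g:=\E(\one_{T^{-t}E}\mid Z_{k-1})=T^{-t}\E(\one_E\mid Z_{k-1}),
\]
where the last identity uses that $T$ commutes with conditional expectation onto a $T$-factor. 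So the task becomes: show that for some $t\ge 0$ the function $g=T^{-t}\,\E(\one_E\mid Z_{k-1})$ on $Z_{k-1}$ satisfies $\int_{Z_{k-1}^k} g\otimes\cdots\otimes g\,\d\xi>0$. Writing $h:=\E(\one_E\mid Z_{k-1})\ge 0$ with $\int_{Z_{k-1}}h\,\d m_{k-1}=\mu(E)>0$, and recalling that $\xi$ is the unique $T\times T^2\times\cdots\times T^k$-invariant measure carried by the orbit closure of the diagonal point $(\pi_{k-1}a,\dots,\pi_{k-1}a)$, unique ergodicity lets me rewrite the integral as a genuine average:
\[
\int_{Z_{k-1}^k} (T^{-t}h)\otimes\cdots\otimes(T^{-t}h)\d\xi
=
\lim_{N\to\infty}\frac{1}{|\Phi_N|}\sum_{n\in\Phi_N}
(T^{-t}h)(T^n z)\,(T^{-t}h)(T^{2n}z)\cdots(T^{-t}h)(T^{kn}z),
\]
with $z=\pi_{k-1}a$.

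Next, since every point of the uniquely ergodic pronilsystem $Z_{k-1}$ is generic for $m_{k-1}$, the point $z$ is generic, and the $\liminf$ over $t$ (or a Cesàro-type average over $t$) of the above quantity should be bounded below by the Furstenberg–Szemerédi expression
\[
\liminf_{T\to\infty}\frac1T\sum_{t=1}^{T}\int_{Z_{k-1}} h\cdot T^{-n}h\cdot T^{-2n}h\cdots T^{-kn}h\;\text{-type averages}>0,
\]
i.e.\ the standard multiple recurrence lower bound $\int h\otimes\cdots\otimes h\,\d\xi$ associated to the measure preserving system $(Z_{k-1},m_{k-1},T)$ and the level set $\{h>0\}$. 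Concretely, I would argue: the product measure $m_{k-1}^{\otimes k}$ dominates $\xi$ in the sense that $\xi$ projects onto $m_{k-1}$ in each coordinate, but more usefully the average of $\int g^{\otimes k}\d\xi$ over shifts $t$ can be compared to a Furstenberg average of the form $\frac1N\sum_n \int h\cdot T^n h\cdots T^{kn}h\,\d m_{k-1}$, which Furstenberg's theorem (or rather the Khintchine-type strengthening in~\cite[Chapter 7]{Furstenberg-book}) shows is bounded away from $0$ in terms of $\mu(E)$ alone. The shift $t$ is exactly the device that lets me replace ``$\int g^{\otimes k}\d\xi>0$ for our specific $h$'' by ``$\int h^{\otimes k}\d\xi>0$'' — one applies unique ergodicity along $\Phi$ to identify the $\xi$-integral with a limit of finitary averages over $n$, then a further averaging over $t\in\{1,\dots,T\}$ converts $(T^{-t}h)^{\otimes k}$ integrated against $\xi$ into an honest double average whose liminf is the Furstenberg recurrence quantity; positivity of the latter forces positivity for at least one $t$.

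The main obstacle, I expect, is justifying the interchange of the two limits (the weak-* limit defining $\xi$ in $n$, and the averaging in $t$) for the non-continuous function $h=\E(\one_E\mid Z_{k-1})$, which is only in $L^\infty(m_{k-1})$, not continuous — whereas \cref{def_tausigma} and unique ergodicity are stated for continuous test functions. I would handle this by an approximation argument: sandwich $\one_E$ (or $h$) between continuous functions from below and above in $L^1$, use that $\xi$'s marginals are absolutely continuous with respect to $m_{k-1}$ (indeed $\le i\,m_{k-1}$ by \cref{lem_sigma_marginals}, which controls each coordinate) to transfer the $L^1$ approximation to the product space, and thereby reduce to the continuous case where unique ergodicity applies cleanly. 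The remaining ingredient — that the Furstenberg multiple recurrence average on $Z_{k-1}$ is positive — is where the ``novel extension of Furstenberg's multiple recurrence theorem'' advertised in the introduction presumably enters; for the purposes of this theorem I would cite \cref{thm_weirdgeneralSzemeredi} (the paper's generalized Szemerédi statement) applied to the pronilsystem $(Z_{k-1},m_{k-1},T)$ with the set $\{h>\delta\}$ for suitable small $\delta>0$, which has positive measure since $\int h\,\d m_{k-1}=\mu(E)>0$.
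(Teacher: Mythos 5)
Your proposal is correct in outline and, at bottom, runs along the same lines as the paper's argument: average the quantity $\sigma(X\times T^{-t}E\times\cdots\times T^{-t}E)$ over the shift $t$, identify the limiting average with a $k$-term Furstenberg multiple recurrence average, and conclude that some $t$ works. The organizational difference is that you unwind $\sigma$ and project to $Z_{k-1}$ immediately, and then apply Furstenberg's theorem on the pronilfactor to $h=\E(\one_E\mid Z_{k-1})$ (or its level set $\{h>\delta\}$), whereas the paper first proves \cref{sigma_diagonal_average_furstenberg}, identifying the $t$-averaged quantity with $\int \one\otimes g\otimes\cdots\otimes g\,\d\lambda$ for the Furstenberg joining $\lambda$ of $(X,\mu,T)$ (using \cite[Theorem 12.1]{HK05} to pass to $Z_{k-1}$), and then applies the classical multiple recurrence theorem upstairs. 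Your route avoids the characteristic-factor identity of \cite{HK05}; the paper's route avoids your $L^1$-approximation of indicators at the top level by simply minorizing $\one_E$ with a continuous $g$ satisfying $0\le g\le\one_E$ and $\int g\,\d\mu>0$ (the set $E$ is open in the application), although an approximation of exactly your kind, justified by \cref{lem_sigma_marginals}, does appear inside the proof of \cref{sigma_diagonal_average_furstenberg}.

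The one step you should justify more carefully is the interchange of the two averages, and the issue there is not (only) the discontinuity of $h$. Even for continuous $h$, genericity of $z=\pi_{k-1}(a)$ and unique ergodicity of $(Z_{k-1},T)$ alone do not let you equate
\[
\lim_{T\to\infty}\frac1T\sum_{t=1}^{T}\int_{Z_{k-1}^k}(T^{-t}h)\otimes\cdots\otimes(T^{-t}h)\,\d\xi
\quad\text{with}\quad
\lim_{N\to\infty}\frac1{|\Phi_N|}\sum_{n\in\Phi_N}\int_{Z_{k-1}}h\cdot T^nh\cdots T^{(k-1)n}h\,\d m_{k-1}:
\]
a weak* limit of the Ces\`aro averages of the push-forwards of $\xi$ under the diagonal transformation $S=T\times\cdots\times T$ is a priori only an $S$- and $R$-invariant measure (with $R=T\times T^2\times\cdots\times T^k$) whose marginals are controlled by $m_{k-1}$, and invariance plus marginal control do not by themselves force such a measure to charge $\{h>\delta\}^k$. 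What makes the identification work is that all these limit measures live on the orbit closure $Y$ of $(z,\dots,z)$ under the $\Z^2$-action generated by $S$ and $R$, and $(Y,S,R)$ is a uniquely ergodic pronilsystem, so the two iterated limits coincide; this is precisely the device used at the end of the paper's proof of \cref{sigma_diagonal_average_furstenberg}, citing \cite[Theorem 17, Chapter 11]{HK-book} and \cite[Lemma 1.1]{bergelson-leibman15}, and your write-up should invoke it explicitly. Two minor points: with the paper's convention $T^tf=f\circ T^t$ one has $\E(\one_{T^{-t}E}\mid Z_{k-1})=T^{t}\E(\one_E\mid Z_{k-1})$ (the sign is immaterial since you average over $t$); and citing \cref{thm_weirdgeneralSzemeredi} for the final positivity is unnecessary — once you are on $(Z_{k-1},m_{k-1},T)$, the classical Furstenberg multiple recurrence theorem suffices, which is all the paper uses for this theorem as well.
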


\begin{Theorem}
\label{thm_sigma_shifted}
Fix an ergodic system $(X,\mu,T)$ with topological pronilfactors.
Let $k \in \N$, let $(\Phi_N)_{N \in \N}$ be a \Folner{} sequence, and let $a \in \gen(\mu,\Phi)$.
Then the measure $\sigma$ described in \cref{def_tausigma} is progressive.
\end{Theorem}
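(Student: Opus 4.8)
The plan is to verify the progressivity condition from \cref{def_shifted_recurrence} directly. Fix open sets $U_1,\dots,U_k\subset X$ with $\sigma(X\times U_1\times\cdots\times U_k)>0$; we must produce infinitely many $n\in\N$ for which
\[
\sigma\big((X\times U_1\times\cdots\times U_k)\cap \TT^{-n}(U_1\times\cdots\times U_k\times X)\big)>0.
\]
The first move is to push the problem down to the pronilfactor. By the definition of $\sigma$ in~\eqref{define:sigma}, testing $\sigma$ against products of nonnegative continuous functions supported in the $U_i$ reduces everything to the measure $\xi$ on $Z_{k-1}^k$ together with the conditional expectations $\E(\one_{U_i}\mid Z_{k-1})$. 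Concretely, set $V_i'=\{z\in Z_{k-1}:\E(\one_{U_i}\mid Z_{k-1})(z)>0\}$ (this requires a short approximation argument since the $V_i'$ need not be open, but one works with positivity of integrals against continuous bumps). Positivity of $\sigma(X\times U_1\times\cdots\times U_k)$ becomes $\xi(V_1'\times\cdots\times V_k')>0$, and the zeroth coordinate $x_0=a$ plays no further role because $\TT^{-n}$ acts on the last $k$ coordinates of the target set in a way that leaves the zeroth coordinate unconstrained (the last slot is $X$). Thus it suffices to find infinitely many $n$ with
\[
\xi\big((V_1'\times\cdots\times V_k')\cap (T\times\cdots\times T)^{-n}(V_2'\times\cdots\times V_k'\times Z_{k-1})\big)>0,
\]
i.e.\ a shifted-recurrence statement for $\xi$ on the pronilsystem $(Z_{k-1}^k, T\times T^2\times\cdots\times T^k)$.

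The second, and main, step is to establish this recurrence for $\xi$. Here I would exploit two features of the pronilfactor. First, $\xi$ is by construction the unique invariant measure on the orbit closure of the diagonal-type point $(\pi_{k-1}a,\dots,\pi_{k-1}a)$ under $T\times T^2\times\cdots\times T^k$, so $\xi$ is supported on that orbit closure and, crucially, $(\pi_{k-1}a,\dots,\pi_{k-1}a)$ is generic for $\xi$ along $\Phi$ (and, by unique ergodicity, along every \Folner{} sequence). Second, the system is a pronilsystem, so one has the full strength of the theory of polynomial orbits on nilmanifolds available — in particular, recurrence along the shift should follow from the announced extension of Furstenberg's multiple recurrence theorem (\cref{thm_weirdgeneralSzemeredi} in the paper's outline), applied inside $Z_{k-1}^k$. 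The qualitative picture to keep in mind is the one from \cref{eg:tau rotation}: if $(z_1,\dots,z_k)$ lies in the support of $\xi$, it is (a limit of points of) the form $(z+\beta,\dots,z+k\beta)$ in the abelian case, and applying the shift $T^n$ with $n\alpha\approx\beta$ sends $z_i$ into a neighborhood of $z_{i+1}$; in the general pronil case the "arithmetic progression structure" is replaced by the structure of the orbit closure, and the set of good return times $n$ is large (syndetic, in fact of positive density along $\Phi$) rather than merely nonempty.

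The cleanest way to organize the main step is probably to argue via genericity: since $(\pi_{k-1}a,\dots,\pi_{k-1}a)$ is generic for $\xi$, the value $\xi(W)$ for an open set $W$ is a lower density of return times of this point to $W$; choosing $W$ to be the intersection set above and using that on a pronilsystem the point's orbit equidistributes in its orbit closure, one gets that the set of $n$ with the orbit point landing in the shifted set has positive density, hence is infinite. The technical heart is showing that the relevant intersection set has positive $\xi$-measure in the first place, which is exactly where multiple recurrence on nilsystems (and the fact that $\xi$ is the Haar measure of a sub-pronilmanifold, not an arbitrary invariant measure) enters; controlling conditional expectations $\E(\one_{U_i}\mid Z_{k-1})$ and their positivity sets under the factor map is the routine-but-fiddly part. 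I expect the main obstacle to be precisely the passage from "$\xi$ charges $V_1'\times\cdots\times V_k'$" to "$\xi$ charges the shifted intersection for infinitely many $n$": this is not formal, it is where the structure of pronilsystems is genuinely used, and it is the reason the naive measure~\eqref{eq_naivesigma} on $X^{k+1}$ cannot be handled directly — one must first project to $Z_{k-1}$, where unique ergodicity and the nilpotent-orbit machinery are available.
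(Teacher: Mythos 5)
There is a genuine gap, and it appears right at your first reduction. In \cref{def_shifted_recurrence} the shifted set is $\TT^{-n}(U_1\times U_2\times\cdots\times U_k\times X)$, where the coordinates are indexed $(x_0,x_1,\dots,x_k)$; the unconstrained slot ($X$) is the \emph{last} coordinate $x_k$, while the \emph{zeroth} coordinate is constrained by $U_1$. Since $\sigma(\{a\}\times X^k)=1$, the condition you must verify for infinitely many $n$ is
\[
T^n a\in U_1,\qquad x_i\in U_i\cap T^{-n}U_{i+1}\ \ (1\le i\le k-1),\qquad x_k\in U_k
\]
on a set of positive $\sigma$-measure. Your claim that ``the zeroth coordinate plays no further role'' is therefore a misreading, and dropping it changes the statement into something strictly weaker (it would no longer tie the progression to the point $a$, which is exactly what \cref{prop_progs_from_shifted} needs). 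Because of this, the problem does \emph{not} reduce to a shifted-recurrence statement for $\xi$ on $Z_{k-1}^k$: the event $T^n a\in U_1$ concerns the orbit of the specific point $a$ in $X$, and $\one_{U_1}$ is in general not measurable with respect to the pronilfactor, so no statement purely about $\xi$ can capture it. Note also that the obvious repair --- observing that $\{n: T^na\in U_1\}$ has positive density by genericity of $a$ and intersecting it with a positive-density set of recurrence times on the nilfactor --- fails, since two sets of positive density can be disjoint; one genuinely has to correlate the sequence $n\mapsto g(T^na)$ (for a continuous minorant $g\le\one_{U_1}$) with the nilfactor dynamics.

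This coupling is precisely the technical heart that your proposal is missing. The paper's route is: choose continuous $G$ with $0\le G\le\one_{U_1\times\cdots\times U_k}$ and $\int \one\otimes G\,\mathsf{d}\sigma>0$, bound the averaged measure of the intersection from below by $\frac1{|\Phi_N|}\sum_n\int(\one\otimes G)\cdot\TT^n(G\otimes\one)\,\mathsf{d}\sigma$ (the factor $\TT^n(G\otimes\one)$ carries the constraint $T^na\in U_1$ through its first variable), and then invoke \cref{second_main_theorem_about_sigma} to replace $\TT^n(G\otimes\one)$ by $\TT^n(\one\otimes G)$ in the $L^2(\sigma)$ limit, after which \cref{cor_thm_weirdgeneralSzemeredi} (your correctly-identified extension of Furstenberg recurrence, applied to the $T\times T^2\times\cdots\times T^k$-invariant projection of $\sigma$) gives positivity. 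The proof of \cref{second_main_theorem_about_sigma} is where the uniformity seminorms (\cref{thm_sigma_prop_iv,thm_sigma_prop_V}) and the correlation estimate \cref{lemma_correlationorthogonality2} --- bounding $\limsup_N\bigl|\frac1{|\Phi_N|}\sum_n g(T^na)F(S^ny)\bigr|$ by $\|\E(g\mid Z_{k-1})\|_{L^1(m_{k-1})}\|F\|_\infty$, using continuity of $g$ and genericity of $a$ --- do the real work; unique ergodicity of pronilsystems only enters afterwards. Your sketch contains the correct second ingredient (recurrence via \cref{thm_weirdgeneralSzemeredi}) but none of this first mechanism, and without it the argument does not establish progressivity as defined.
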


These ingredients, together with the ability to pass to an extension with topological pronilfactors, combine to give a quick proof of \cref{thm_shifted_recurrence_happens}.

\begin{proof}[Proof that Theorems~\ref{thm_sigma_sz_thing} and~\ref{thm_sigma_shifted} together imply \cref{thm_shifted_recurrence_happens}]
Fix an ergodic system $(X,\mu,T)$, a \Folner{} sequence $(\Phi_N)_{N \in \N}$, a point $a \in \gen(\mu,\Phi)$, and an open set $E \subset X$ with $\mu(E) > 0$.
By~\cite[Proposition 5.7 and Lemma 5.8]{KMRR}, there exist an ergodic system $(\tilde X,\tilde \mu,\tilde T)$ with topological pronilfactors and a continuous factor map $\pi \colon \tilde X \to X$ such that
$(X,\mu,T)$ is a factor of $(\tilde X,\tilde \mu,\tilde T)$.
Moreover, there is a \Folner{} sequence $(\tilde \Phi_N)_{N \in \N}$ and $\tilde a \in \gen(\tilde \mu,\tilde \Phi)$ such that $\pi(\tilde a) = a$.
Let $\tilde\sigma$ denote the measure on $(\tilde X)^{k+1}$ defined by~\eqref{define:sigma} and let $\tilde E:=\pi^{-1}(E)$.
Applying Theorems~\ref{thm_sigma_sz_thing} and~\ref{thm_sigma_shifted} to $\tilde \sigma$, it follows that $\tilde\sigma$
is a progressive measure, that  $\tilde{\sigma}( \{ \tilde{a} \} \times X^k ) = 1$, and that there exists an integer $t \ge 0$ satisfying $\tilde\sigma(\tilde X\times \tilde T^{-t}\tilde E\times\cdots\times \tilde T^{-t}\tilde E)>0$.

Letting $\sigma$ to be the push-forward of $\tilde \sigma$ under the map $\pi \times \pi \times \dots \times \pi$, it follows that $\sigma$ satisfies the conclusion of \cref{thm_shifted_recurrence_happens}.
\end{proof}

\section{Proof of \cref{thm_sigma_sz_thing}}
\label{sec_findingt}

We are left with proving Theorems~\ref{thm_sigma_sz_thing} and~\ref{thm_sigma_shifted}.
The proof of the former is given in this section, whereas the proof of the latter is in \cref{sec_proof_of_sigma_shifted}.
In both proofs we relate certain dynamical averages of $\sigma$ to the Furstenberg joining of $(X,\mu,T)$.

\begin{Definition}[The Furstenberg joining]
\label{def:Furstjoining}
Given a measure preserving system $(X,\mu,T)$ and $k\in\N$, let $\lambda$ be the Borel probability measure on $X^{k+1}$ uniquely determined by
\begin{align}
\label{defn:furstenberg_joining}
\int_{X^{k+1}} f_0\otimes f_1\otimes \cdots\otimes f_k \d\lambda = \lim_{N\to\infty}\frac{1}{|\Phi_N|}\sum_{n\in\Phi_N} \int_X f_0\cdot T^nf_1\cdots T^{kn}f_k \d\mu
\end{align}
for all $f_0,f_1,\ldots,f_k\in L^\infty(\mu)$.
The measure $\lambda$ determined this way is called the \define{Furstenberg joining} of $(X,\mu,T)$.
\end{Definition}

This joining is well defined, as it is shown in~\cite{HK05} that the limit exists and is independent of the choice of \Folner{} sequence $\Phi=(\Phi_N)_{N\in\N}$.

As we show, \cref{thm_sigma_sz_thing} follows quite quickly from the following theorem and Furstenberg's multiple recurrence theorem.

\begin{Theorem}
\label{sigma_diagonal_average_furstenberg}
Fix an ergodic measure preserving system $(X,\mu,T)$ with topological pronilfactors, $k \in \N$, a \Folner{} sequence $(\Phi_N)_{N \in \N}$ and $a \in \gen(\mu,\Phi)$.
Let $\sigma$ be the measure described in \cref{def_tausigma}.
For any $g_1,\dots,g_k \in C(X)$ we have
\begin{equation}
\label{eqn_sigma_average_furstenberg}
\lim_{N\to\infty} \frac{1}{|\Phi_N|} \sum_{n\in\Phi_N} \int_{X^{k+1}}\TT^n(\one\otimes g_1 \otimes \cdots \otimes g_k)\d\sigma
=
\int_{X^{k+1}} \one \otimes g_1 \otimes \cdots \otimes g_k \d\lambda
\end{equation}
where $\lambda$ is the Furstenberg joining.
\end{Theorem}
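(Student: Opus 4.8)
The plan is to show that both sides of~\eqref{eqn_sigma_average_furstenberg} equal the integral of $\tilde g_1\otimes\cdots\otimes\tilde g_k$ against one and the same measure on $Z_{k-1}^k$, where $\tilde g_i:=\E(g_i\mid Z_{k-1})$. Write $Z:=Z_{k-1}$, $m:=m_{k-1}$, $\pi:=\pi_{k-1}$, $y:=\pi(a)\in Z$ and $\mathbf y:=(y,\dots,y)\in Z^k$. On $Z^k$ introduce the commuting translations $D:=T\times\cdots\times T$ and $S:=T\times T^2\times\cdots\times T^k$, let $\Delta\colon Z\to Z^k$ be the diagonal embedding, and set $\delta^\Delta:=\Delta_*m\in\M(Z^k)$. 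Since $\Delta\circ T=D\circ\Delta$, the measure $\delta^\Delta$ is $D$-invariant, while $\xi$ is $S$-invariant by construction; moreover, by~\eqref{def:xi} one has $\xi=\lim_N\frac1{|\Phi_N|}\sum_{n\in\Phi_N}S^n_*\delta_{\mathbf y}$, and, because $(Z,T)$ is a uniquely ergodic pronilsystem, $\delta^\Delta=\Delta_*m=\lim_N\frac1{|\Phi_N|}\sum_{n\in\Phi_N}\Delta_*(T^n_*\delta_y)=\lim_N\frac1{|\Phi_N|}\sum_{n\in\Phi_N}D^n_*\delta_{\mathbf y}$.

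First I would rewrite the left-hand side. Since $\TT^n(\one\otimes g_1\otimes\cdots\otimes g_k)=\one\otimes T^ng_1\otimes\cdots\otimes T^ng_k$ and conditional expectation onto $Z$ commutes with $T$, the defining formula~\eqref{define:sigma} yields
\[
\int_{X^{k+1}}\TT^n(\one\otimes g_1\otimes\cdots\otimes g_k)\d\sigma=\int_{Z^k}(\tilde g_1\otimes\cdots\otimes\tilde g_k)\circ D^n\d\xi=\int_{Z^k}\tilde g_1\otimes\cdots\otimes\tilde g_k\d\bigl(D^n_*\xi\bigr),
\]
so the left-hand side of~\eqref{eqn_sigma_average_furstenberg} equals $\lim_N\int_{Z^k}\tilde g_1\otimes\cdots\otimes\tilde g_k\d\mu_N$, where $\mu_N:=\frac1{|\Phi_N|}\sum_{n\in\Phi_N}D^n_*\xi$. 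Next I would rewrite the right-hand side. By \cref{def:Furstjoining} it equals $\lim_N\frac1{|\Phi_N|}\sum_{n\in\Phi_N}\int_X\prod_{i=1}^kT^{in}g_i\d\mu$. Since $Z_{k-1}$ is a characteristic factor for the $k$-term averages $\frac1{|\Phi_N|}\sum_n\prod_{i=1}^kT^{in}(\cdot)$ (Host--Kra, see~\cite{HK-book}), a telescoping argument lets me replace each $g_i$ by $\tilde g_i$ without changing this limit; and since the $\tilde g_i$ are $Z$-measurable, $\int_X\prod_iT^{in}\tilde g_i\d\mu=\int_Z\prod_iT^{in}\tilde g_i\d m=\int_{Z^k}\tilde g_1\otimes\cdots\otimes\tilde g_k\d\bigl(S^n_*\delta^\Delta\bigr)$. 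Thus the right-hand side of~\eqref{eqn_sigma_average_furstenberg} equals $\lim_N\int_{Z^k}\tilde g_1\otimes\cdots\otimes\tilde g_k\d\nu_N$, where $\nu_N:=\frac1{|\Phi_N|}\sum_{n\in\Phi_N}S^n_*\delta^\Delta$.

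It then remains to prove that the two limits agree, and the key point is that $(\mu_N)$ and $(\nu_N)$ converge weak* to the same measure. Let $V\subseteq Z^k$ be the orbit closure of $\mathbf y$ under the $\Z^2$-action generated by $S$ and $D$. Using the two formulas for $\xi$ and $\delta^\Delta$ above, together with minimality of $(Z,T)$ (which gives $\supp\delta^\Delta=\Delta(Z)=\overline{\{D^m\mathbf y:m\in\N\}}$), I would check that every $D^n_*\xi$ and every $S^n_*\delta^\Delta$ is a probability measure supported on $V$; moreover $D^n_*\xi$ is $S$-invariant and $S^n_*\delta^\Delta$ is $D$-invariant. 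The usual averaging argument then shows that every weak* limit point of $(\mu_N)$ and of $(\nu_N)$ is a probability measure on $V$ invariant under both $S$ and $D$. Since an orbit closure of a $\Z^d$-action by translations on a pronilmanifold supports a \emph{unique} invariant measure (Leibman's equidistribution theory; cf.~\cite{leibman-05}), both $(\mu_N)$ and $(\nu_N)$ converge weak* to that unique measure $\mu_V$ on $V$. Finally, since $\tilde g_1\otimes\cdots\otimes\tilde g_k$ is only bounded Borel, I would approximate each $\tilde g_i$ in $L^1(m)$ by continuous functions of the same sup-norm bound, and use that the $i$-th coordinate marginal of each $\mu_N$, $\nu_N$ and $\mu_V$ is dominated by $i\,m$ — which follows from $\xi_i\le i\,m$, as established in the proof of \cref{lem_sigma_marginals}, together with the $T$-invariance of $m$ — so that a telescoping estimate makes the relevant integrals depend continuously on $(\tilde g_1,\dots,\tilde g_k)$ in the $L^1(m)$-topology, uniformly in $N$. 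Hence $\int\tilde g_1\otimes\cdots\otimes\tilde g_k\d\mu_N\to\int\tilde g_1\otimes\cdots\otimes\tilde g_k\d\mu_V$ and likewise for $\nu_N$, which gives~\eqref{eqn_sigma_average_furstenberg}.

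The main obstacle is the key input to the last step: the fact that the orbit closure of a point under a $\Z^2$-action by commuting translations on a (pro)nilmanifold is a sub-(pro)nilmanifold carrying a unique invariant measure, which forces the two \Cesaro{} averages $\mu_N$ (built from $\xi$ pushed by powers of $D$) and $\nu_N$ (built from $\delta^\Delta$ pushed by powers of $S$) to share the same limit. For genuine nilmanifolds this is part of Leibman's equidistribution theory; the passage to pronilmanifolds is routine once continuous functions are approximated by ones lifted from finite-step nilfactors. A secondary, essentially bookkeeping, point is verifying that both averages are governed by the one orbit closure $V$ — this is where the diagonal structure of $\mathbf y$ and the minimality of $(Z,T)$ are used.
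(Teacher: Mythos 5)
Your proposal is correct and takes essentially the same route as the paper: both pass to the pronilfactor via Host--Kra (the paper through \cite[Theorem 12.1]{HK05}, you through the characteristic-factor property plus $\pi_*\mu=m_{k-1}$), handle the merely measurable conditional expectations by approximation using the marginal bounds from \cref{lem_sigma_marginals}, and conclude from unique ergodicity of the $\Z^2$-orbit closure of the diagonal point in $Z_{k-1}^k$ under the two commuting translations $T\times\cdots\times T$ and $T\times T^2\times\cdots\times T^k$. The only difference is packaging: the paper states the final step as an interchange of two iterated limits justified by a unique invariant mean, while you state it as weak* convergence of the two \Folner{} averages of measures to the unique invariant measure on that orbit closure.
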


\begin{proof}[Proof that \cref{sigma_diagonal_average_furstenberg} implies \cref{thm_sigma_sz_thing}]
Since $\mu$ is a Borel measure on the compact metric space $X$, it is regular. Therefore, given an open set $E\subset X$ with $\mu(E)>0$, one can use Urysohn's lemma to find $g \in C(X)$ with $\int g\d\mu>0$ and $0 \le g \le \one_E$.
Thus
\[
\int_{X^{k+1}}\TT^n(\one \otimes \one_E \otimes \cdots \otimes \one_E)\d\sigma
\ge
\int_{X^{k+1}}\TT^n(\one\otimes g \otimes \cdots \otimes g)\d\sigma
\]
for all $n \in \N$.
Combining this observation with \cref{sigma_diagonal_average_furstenberg} gives that
\begin{eqnarray*}
\lim_{N\to\infty} \frac{1}{|\Phi_N|} \sum_{n\in\Phi_N} \int_{X^{k+1}}\TT^n(\one\otimes \one_E \otimes \cdots \otimes \one_E)\d\sigma
&\geq&
\int \one \otimes g \otimes \cdots \otimes g \intd \lambda.
\end{eqnarray*}
By the definition of $\lambda$, this last expression is equal to
\[
\lim_{N\to\infty}\frac1{|\Phi_N|}\sum_{n\in\Phi_N}\int_XT^ng\cdots T^{kn}g\d\mu
\]
and this is positive by Furstenberg's multiple recurrence theorem~\cite[Theorem 11.13]{Furstenberg-1977}.
We have shown that
\[
\liminf_{N \to \infty} \dfrac{1}{|\Phi_N|} \sum_{n \in \Phi_N} \sigma(X \times T^{-t} E \times \cdots \times T^{-t} E) > 0,
\]
which is a stronger statement than what is required in the conclusion of \cref{thm_sigma_sz_thing}.
\end{proof}

It remains to prove \cref{sigma_diagonal_average_furstenberg}.

\begin{proof}[Proof of \cref{sigma_diagonal_average_furstenberg}]
Fix $g_1,\dots,g_k \in C(X)$.
Our goal is to establish~\eqref{eqn_sigma_average_furstenberg}.
It follows from~\cite[Theorem 12.1]{HK05} that
\[
\int g_1 \otimes \cdots \otimes g_k \intd \lambda
=
\lim_{L\to\infty}\frac{1}{|\Phi_L|}\sum_{\ell\in\Phi_L} \int_{Z_{k-1}} T^\ell \E(g_1\mid Z_{k-1}) \cdots T^{k\ell} \E(g_k\mid Z_{k-1})\d m_{k-1}
\]
where $(Z_{k-1}, m_{k-1}, T)$ is the $(k-1)$-step pronilfactor of $(X,\mu,T)$.
Combined with~\eqref{define:sigma} it follows that, in order to establish~\eqref{eqn_sigma_average_furstenberg}, it suffices to prove
\[
\lim_{N\to\infty}\frac{1}{|\Phi_N|}\sum_{n\in\Phi_N} \int_{Z_{k-1}^{k}} T^n h_1\otimes\cdots\otimes T^n h_k\d\xi
=
\lim_{L\to\infty}\frac{1}{|\Phi_L|}\sum_{\ell\in\Phi_L}\int_{Z_{k-1}} T^{\ell}h_1\cdots T^{k\ell}h_k\d m_{k-1}
\]
whenever $h_1,\dots,h_k : Z_{k-1} \to \C$ are bounded and measurable.
By a standard approximation argument and \cref{lem_sigma_marginals} it suffices to establish this when the functions $h_1,\dots,h_k$ are continuous, which we henceforth assume.
Writing $\tilde{a}=\pi_{k-1}(a)$ we have the following from the definition of $\xi$:
\begin{equation}
\label{eq_proof_first_main_theorem_about_sigma1}
\begin{split}
\lim_{N\to\infty}\frac1{|\Phi_N|}\sum_{n\in\Phi_N} & \int_{Z_{k-1}^{k}} T^n h_1\otimes\cdots\otimes T^n h_k\d\xi
\\
&=
\lim_{N\to\infty}\lim_{L\to\infty}\frac1{|\Phi_N|}\sum_{n\in\Phi_N}\frac1{|\Phi_L|}\sum_{\ell\in\Phi_L} h_1(T^{n+\ell}\tilde{a}) \cdots h_k(T^{n+k\ell}\tilde{a}).
\end{split}
\end{equation}
On the other hand, ergodicity of $(X,\mu,T)$ ensures ergodicity, and hence unique ergodicity, of the pronilfactor $(Z_{k-1},m_{k-1},T)$, and so it follows that
\begin{equation}
\label{eq_proof_first_main_theorem_about_sigma2}
\begin{split}
\lim_{L\to\infty}\frac{1}{L}\sum_{\ell=1}^L & \int_{Z_{k-1}} T^{\ell}h_1\cdots T^{k\ell}h_k\d m_{k-1}
\\
&=
\lim_{L\to\infty}\lim_{N\to\infty}
\frac1{|\Phi_L|}\sum_{\ell\in\Phi_L}
\frac1{|\Phi_N|}\sum_{n\in\Phi_N}
h_1(T^{\ell+n}\tilde{a})\cdots h_k(T^{k\ell+n}\tilde{a}).
\end{split}
\end{equation}
Consider now the commuting nilrotations
\begin{gather*}
S = T \times T \times \cdots \times T \\
R = T \times T^2 \times \cdots \times T^k
\end{gather*}
on $(Z_{k-1})^k$.
Together they induce an action of $\Z^2$ on $Z_{k-1}^k$.
Set
\[
Y:=\overline{\{S^nR^\ell(\tilde a,\dots,\tilde a):n,\ell\in\Z\}}\subset Z_{k-1}^k
\]
to be the orbit closure of $(\tilde a,\dots,\tilde a)\in\Z_{k-1}^k$ under $S$ and $R$.  Since $(Z_{k-1},m_{k-1},T)$ is a pronilsystem, the $\Z^2$-system $(Y,S,R)$ is also a pronilsystem and hence is uniquely ergodic~\cite[Theorem 17, Chapter 11]{HK-book}.
Therefore, there is a unique invariant mean for these averages, which implies (see for example~\cite[Lemma 1.1]{bergelson-leibman15}) that the expressions in~\eqref{eq_proof_first_main_theorem_about_sigma1} and~\eqref{eq_proof_first_main_theorem_about_sigma2} coincide, finishing the proof.
\end{proof}

\section{Proof of \cref{thm_sigma_shifted}}
\label{sec_proof_of_sigma_shifted}
\subsection{Ingredients in the proof of \cref{thm_sigma_shifted}}

We are left with proving  \cref{thm_sigma_shifted}.
The main ingredients are the following two theorems.
The first -- a consequence of Furstenberg's multiple recurrence theorem -- is proved in \cref{subsec:furstenberg_extension}.
The second is proved in \cref{sec_sigma_coordinate_invariant} and uses material about uniformity norms covered in \cref{subsec:uniformity_norms}.

\begin{Theorem}
\label{cor_thm_weirdgeneralSzemeredi}
Let $(X,T)$ be a topological system, let $\Phi =(\Phi_N)_{N\in\N}$ be a F\o lner sequence in $\N$, and let $\nu\in\M(X^{k})$ be $T\times T^2\times\cdots\times T^k$ invariant.
If $G\colon X^k\to[0,1]$ is continuous and satisfies $\int_{X^k}G\d\nu>0$, then
$$\liminf_{N\to\infty}\frac1{|\Phi_N|}\sum_{n\in\Phi_N}\int_{X^k}G\cdot \TT^nG\d\nu>0.$$
\end{Theorem}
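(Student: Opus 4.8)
The plan is to reduce this to the classical Furstenberg multiple recurrence theorem applied to the measure preserving system $(X^k, \nu, T\times T^2\times\cdots\times T^k)$. Writing $R = T\times T^2\times\cdots\times T^k$, the hypothesis is that $\nu$ is $R$-invariant and $G\colon X^k\to[0,1]$ is continuous with $\int G\intd\nu>0$; the quantity to bound is $\liminf_N \frac{1}{|\Phi_N|}\sum_{n\in\Phi_N}\int G\cdot\TT^nG\intd\nu$, where $\TT = T\times T\times\cdots\times T$ acts diagonally on $X^k$. The key observation is that $\TT$ does \emph{not} preserve $\nu$, so one cannot directly invoke recurrence for $\TT$ on $(X^k,\nu)$. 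Instead I would rewrite $\TT^n$ in terms of powers of $R$ together with a diagonal change of coordinates. Concretely, set $B = \{x\in X^k : G(x) > 0\}$, an open set with $\nu(B)>0$ (after replacing $G$ by $\one_{\{G>0\}}$ it suffices to bound $\liminf_N \frac1{|\Phi_N|}\sum_{n\in\Phi_N}\nu(B\cap\TT^{-n}B)$ from below — actually one wants the integral form, but a positive lower bound for $\nu(\{G>c\}\cap\TT^{-n}\{G>c\})$ for suitable $c>0$ suffices by Chebyshev).

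First I would handle the relation between $\TT$ and $R$. Note that $\TT = T\times T\times\cdots\times T$ and $R = T\times T^2\times\cdots\times T^k$ are both built from $T$, and on the $j$-th coordinate $\TT^n$ acts as $T^n$ while $R^n$ acts as $T^{jn}$. These do not coincide, but the point is that $(X^k, \nu, R)$ is a genuine measure preserving system, and Furstenberg's theorem applied to it guarantees that for any set $A\subset X^k$ with $\nu(A)>0$ there are many $n$ with $\nu(A\cap R^{-n}A\cap R^{-2n}A\cap\cdots)>0$. The relevant trick, which I expect to be the heart of the argument, is to find a single transformation commuting appropriately so that $\TT^nG\cdot G$ can be compared to an expression governed by recurrence of $R$ or of the $\Z^2$-action generated by $S=\TT$ and $R$. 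Since $S$ and $R$ commute, the pair generates a $\Z^2$-action, and one can apply the multidimensional/commuting version of Furstenberg's recurrence theorem (Furstenberg--Katznelson) to the $\Z^2$-system $(X^k,\nu,S,R)$: but $S$ doesn't preserve $\nu$.

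The cleaner route, and the one I would actually pursue: apply Furstenberg's multiple recurrence theorem directly to the system $(X,\mu',T)$ where $\mu'$ is a marginal, or rather observe that $\nu$ being $R$-invariant means that its pushforward structure under the coordinate projections gives $T^j$-invariant measures. Explicitly, the statement to prove is equivalent to a statement about the single system: for $G\ge0$ continuous with $\int G\intd\nu>0$, consider $H(x_1,\ldots,x_k) := G(x_1,\ldots,x_k)$; then $\int G\cdot\TT^nG\intd\nu = \int G(x_1,\dots,x_k)G(T^nx_1,\dots,T^nx_k)\intd\nu$. Changing variables via the $R$-invariance won't directly linearize the $T^n$'s. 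So the actual mechanism must be: decompose along the $R$-orbit closure of a generic point and use that $(X^k,\nu,R)$, when $X^k$ is a pronilsystem-like object (which it is in our application since $Z_{k-1}^k$ appears), is uniquely ergodic; then $\TT$ also acts and the closure of the joint orbit is a pronilsystem on which unique ergodicity forces the averages to converge to an integral over an $S$-invariant, $R$-invariant measure, which is positive by a van der Corput / IP-recurrence argument. Since the excerpt names this ``a novel extension of Furstenberg's multiple recurrence theorem (\cref{thm_weirdgeneralSzemeredi})'' and says it is ``a consequence of Furstenberg's multiple recurrence theorem,'' the honest plan is: state and prove the abstract extension \cref{thm_weirdgeneralSzemeredi} — which should assert that for a measure preserving $\Z$-system with an auxiliary commuting (not necessarily measure-preserving) transformation, diagonal recurrence into a positive-measure set occurs along a F\o lner sequence with positive lower density — and then deduce \cref{cor_thm_weirdgeneralSzemeredi} by specializing.

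The main obstacle, as I see it, is precisely that $\TT$ does not preserve $\nu$: all standard recurrence machinery (SZ, Furstenberg--Katznelson) requires measure preservation of every transformation involved. Overcoming this will require exploiting additional structure — most plausibly that in the intended application $\nu$ lives on $Z_{k-1}^k$ which is a pronilsystem, so unique ergodicity of the relevant orbit closures under the $\Z^2$-action generated by $\TT$ and $R$ pins down all the averages, converting the recurrence question into a statement about the support of a single invariant measure, which can then be verified by finding one explicit return time (using, say, that the diagonal in $Z_{k-1}^k$ is in the support and $T^n$ nearly fixes it for $n$ in a syndetic set by nil-Bohr recurrence). I would therefore first prove the abstract \cref{thm_weirdgeneralSzemeredi} in this generality — assuming enough structure (topological system, the auxiliary transformation, invariance of $\nu$ under $R$ only) to run a compactness-plus-unique-ergodicity argument — and then obtain \cref{cor_thm_weirdgeneralSzemeredi} as an immediate corollary by taking $G$ continuous and noting $\{G>0\}$ is open of positive measure.
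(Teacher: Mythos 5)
You correctly identify the central obstacle — that $\TT$ does not preserve $\nu$ — but your proposal never supplies the idea that actually overcomes it, and the abstract statement you guess for \cref{thm_weirdgeneralSzemeredi} is false as you state it. You propose an extension asserting diagonal recurrence of $\TT$ into an arbitrary positive-measure set for an $R$-invariant measure ($R=T\times T^2\times\cdots\times T^k$), but the paper gives a counterexample to exactly this: for an irrational rotation, with $\nu$ the Haar measure on $H=\{(x,2x)\}\subset\T^2$, one has $T_u^{-n}H$ disjoint from $H$ for all $n$ when $u=(2,1)$, $v=(1,2)$. The hypothesis that cannot be dropped, and which your plan never uses, is that the target set is a \emph{product} set. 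The paper's argument runs as follows: since $G$ is continuous with $\int G\intd\nu>0$, there are open sets $U_1,\dots,U_k$ and $c>0$ with $c\,\one_{U_1}\otimes\cdots\otimes\one_{U_k}\le G$ and $\nu(U_1\times\cdots\times U_k)>0$; then, writing $A=U_1\times\cdots\times U_k$, one exploits the arithmetic relation between the diagonal action and $R$ on cylinder sets. With $c=\operatorname{LCM}(1,\dots,k)$ and $w_i=c/i$, the identity $\TT^{-cn}(X^{i-1}\times U_i\times X^{k-i})=R^{-w_in}(X^{i-1}\times U_i\times X^{k-i})$ gives $\TT^{-cn}A\supset\bigcap_{i=1}^k R^{-w_in}A$, so restricting the average to $n$ divisible by $c$ reduces everything to the classical Furstenberg multiple recurrence theorem for the genuinely measure-preserving system $(X^k,\nu,R)$ with exponents $w_1,\dots,w_k$. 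No structure beyond $R$-invariance is needed, and the result holds for every topological system, as claimed.

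Your fallback route — unique ergodicity of pronilsystems, nil-Bohr recurrence, and "finding one explicit return time" — does not repair this. First, it imports hypotheses the theorem does not have: \cref{cor_thm_weirdgeneralSzemeredi} is stated for an arbitrary topological system and an arbitrary $R$-invariant $\nu$, not for $Z_{k-1}^k$, and the paper needs it at that generality. Second, even in the intended application, unique ergodicity identifies the limit of certain averages but does not by itself show the liminf in question is positive, and a single return time is far from the positive lower density of returns along every F\o lner sequence that the statement demands. The missing ingredient is precisely the product-set reduction together with the LCM rescaling that converts non-measure-preserving diagonal recurrence into multiple recurrence for $R$; without it the proposal does not yield a proof.
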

\begin{Theorem}
\label{sigma_coordinate_invariant}
\label{second_main_theorem_about_sigma}
Fix an ergodic measure preserving system $(X,\mu,T)$ with topological pronilfactors, $k \in \N$, a \Folner{} sequence $(\Phi_N)_{N \in \N}$, and a point $a \in \gen(\mu,\Phi)$.
Let $\sigma$ be the measure given in \cref{def_tausigma}.
For any continuous function $G \colon X^k \to \C$ we have
\[
\lim_{N\to\infty}\bigg\|\frac1{|\Phi_N|}\sum_{n\in\Phi_N} \TT^n (G\otimes\one)\,-\, \frac1{|\Phi_N|}\sum_{n\in\Phi_N} \TT^n(\one\otimes G)\bigg\|_{L^2(\sigma)}=0.
\]
\end{Theorem}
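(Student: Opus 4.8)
The plan is to compute both $L^2(\sigma)$-norms and their cross term explicitly, reducing everything to averages of the function $G$ over the orbit closure $Y \subset Z_{k-1}^k$ of $(\tilde a,\dots,\tilde a)$ under the commuting nilrotations $S = T\times\cdots\times T$ and $R = T\times T^2\times\cdots\times T^k$, exactly as in the proof of \cref{sigma_diagonal_average_furstenberg}. First I would observe that, by \cref{lem_sigma_marginals}, the map $G\mapsto \frac1{|\Phi_N|}\sum_{n\in\Phi_N}\TT^n(G\otimes\one)$ is bounded on $L^2$ uniformly in $N$ with respect to suitable reference measures, so a standard approximation argument lets us assume $G$ is a finite sum (or limit) of tensor products $g_1\otimes\cdots\otimes g_k$ with each $g_j\in C(X)$; by multilinearity it then suffices to treat $G = g_1\otimes\cdots\otimes g_k$ and, via the defining formula \eqref{define:sigma} for $\sigma$, to replace each $g_j$ by $h_j := \E(g_j\mid Z_{k-1})\in C(Z_{k-1})$ and work entirely on $Z_{k-1}^k$ against the measure $\xi$.

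Next I would expand the square of the difference. Writing $A_N(F) := \frac1{|\Phi_N|}\sum_{n\in\Phi_N}\TT^n F$, we have
\[
\big\|A_N(G\otimes\one)-A_N(\one\otimes G)\big\|_{L^2(\sigma)}^2
= \|A_N(G\otimes\one)\|^2 - 2\,\Re\langle A_N(G\otimes\one),A_N(\one\otimes G)\rangle + \|A_N(\one\otimes G)\|^2,
\]
and I would show each of the three inner products, after using \eqref{define:sigma} and the definition \eqref{def:xi} of $\xi$, becomes a double ergodic average of the form
\[
\frac1{|\Phi_N|^2}\sum_{n,n'\in\Phi_N} \frac1{|\Phi_L|}\sum_{\ell\in\Phi_L} \prod_{j=1}^k h_j\big(T^{\,n+j\ell}\tilde a\big)\,\overline{h_j\big(T^{\,n'+ j\ell}\tilde a\big)}
\]
(with $L\to\infty$), where in the cross term the factor $\TT^n$ shifts the first copy of $G$ by $n$ and $\TT^{n'}$ shifts the second copy, but since the $\xi$-marginals are indexed so that coordinate $j$ carries $T^{j\ell}$, the combinatorics of which copy gets the extra $T^n$ versus no shift match up. The key point is that in $Y$ the nilrotation $S$ commutes with $R$ and the $\Z^2$-system $(Y,S,R)$ is a uniquely ergodic pronilsystem (as cited from \cite[Theorem 17, Chapter 11]{HK-book}); hence all these multi-parameter ergodic averages converge to integrals against the unique invariant measure $m_Y$ on $Y$, and the limits do not depend on the order of the averages (via \cite[Lemma 1.1]{bergelson-leibman15} or Weyl-type equidistribution on the nilmanifold $Y$). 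Carrying this out, the limit as $N\to\infty$ of $\|A_N(G\otimes\one)\|_{L^2(\sigma)}^2$, of $\|A_N(\one\otimes G)\|_{L^2(\sigma)}^2$, and of the cross term $\langle A_N(G\otimes\one),A_N(\one\otimes G)\rangle$ are all equal to one and the same quantity $\int_Y \big|\tilde G\big|^2\,dm_Y$, where $\tilde G$ is the image of $G$ under the averaging by $S$ on $Y$; hence the difference of norms squared tends to $0$.

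The main obstacle I anticipate is handling the interchange and simultaneous convergence of the two independent averaging parameters $n,n'$ (over $\Phi_N$) together with the internal parameter $\ell$ (over $\Phi_L$) that defines $\xi$: a priori one has iterated rather than joint limits, and one must justify that all three expressions converge and that the $n$- and $n'$-averages "decouple" correctly onto the $S$-averaged function on $Y$. This is where unique ergodicity of the pronilsystem $(Y,S,R)$ does the real work — it upgrades the a priori order-dependent iterated limits to honest limits of multi-parameter averages with a single value, so that the Cauchy--Schwarz-type cancellation between the two terms is exact in the limit. Once the three limits are identified as the same number, the conclusion is immediate. A secondary, more routine point is checking that the approximation of a general continuous $G\colon X^k\to\C$ by tensor-product combinations is compatible with the $L^2(\sigma)$ norms uniformly in $N$, which follows from \cref{lem_sigma_marginals} bounding each marginal of $\sigma$ by $i\mu$ together with the contractivity of conditional expectation.
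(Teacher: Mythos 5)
There is a genuine gap, and it sits exactly where you dismiss the hard work as bookkeeping: the claim that ``via the defining formula \eqref{define:sigma} for $\sigma$'' one may replace each $g_j$ by $h_j=\E(g_j\mid Z_{k-1})$ and then ``work entirely on $Z_{k-1}^k$ against the measure $\xi$.'' The formula \eqref{define:sigma} applies to a single tensor product $f_0\otimes\cdots\otimes f_k$, but the quantity you must control is an $L^2(\sigma)$ norm of an average; expanding the square produces integrands whose $j$-th coordinate is a product $T^n g_j\cdot T^{n'}\overline{g_j}$, and $\E(T^n g_j\cdot T^{n'}\overline{g_j}\mid Z_{k-1})$ does not factor as $T^n\E(g_j\mid Z_{k-1})\cdot T^{n'}\E(\overline{g_j}\mid Z_{k-1})$, so the passage from $\sigma$ to $\xi$ with the functions $h_j$ is not a consequence of the definition of $\sigma$. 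A second, independent problem is the zeroth coordinate: for the average of $\TT^n(G\otimes\one)$ it contributes the scalar weight $g_1(T^n a)$, and your cross-term formula silently replaces this by $h_1(T^n\tilde a)$. Since $a$ is just a generic point, $g_1(T^n a)-h_1(\pi_{k-1}(T^n a))$ does not tend to zero along the orbit; only its averaged correlations against nilsequences vanish, and proving that requires an orthogonality statement for the single point $a$ (the paper's \cref{lemma_correlationorthogonality2}, which itself rests on the Host--Kra seminorms and the fact that a function with $\E(f\mid Z_{k-1})=0$ has vanishing $U^k$ seminorm). In the paper, both replacements are exactly what Section 6.2 is for: the unweighted seminorm estimate (\cref{thm_sigma_prop_V}) handles the average of $\TT^n(\one\otimes G)$, the weighted estimate (\cref{thm_sigma_prop_iv}, with weight $b(n)=f_1(T^na)$) handles the average of $\TT^n(G\otimes\one)$, and \cref{lemma_correlationorthogonality2} handles the weight itself. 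None of this characteristic-factor machinery appears in your proposal, so the reduction to $Z_{k-1}^k$ is unjustified.

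The final part of your plan --- once everything lives on $Z_{k-1}^k$, use that the orbit closure $Y$ of $(\tilde a,\dots,\tilde a)$ under the commuting nilrotations $S$ and $R$ is a uniquely ergodic pronilsystem to interchange limits and identify the limiting expressions --- is indeed how the paper concludes (its step \eqref{eqn_prf_2ndmt_s_6}, where a change of variables $n\mapsto n-m$ makes the two terms coincide). But be aware that even there the identification is not of three separate limits with a common value $\int_Y|\tilde G|^2\,dm_Y$ computed abstractly; one must track an index mismatch (in one average $h_{j}$ travels with $T^{(j-1)\ell}$, in the other with $T^{j\ell}$) which is what the shift $n\mapsto n-m$ repairs. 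So the nilsystem endgame of your proposal is essentially right, but without the $U^k$-seminorm reduction the proof does not get to that stage.
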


Using these two theorems we can quickly prove \cref{thm_sigma_shifted}.

\begin{proof}[Proof that Theorems \ref{cor_thm_weirdgeneralSzemeredi} and \ref{second_main_theorem_about_sigma} imply \cref{thm_sigma_shifted}]
Our goal is to show that the measure $\sigma$ defined in \cref{def_tausigma} is progressive.
Fix open sets $U_1,\dots,U_k$ with
\[
\sigma(X \times U_1 \times \dots \times U_k) > 0
\]
and set $V = U_1 \times \dots \times U_k$.
Since $V$ is open and $\sigma(X\times V)>0$, there exists
a continuous function $G\in C(X^{k})$ such that $0\leq G\leq \one_{V}$ and
$$\int_{X^{k+1}}1\otimes G\d\sigma>0.$$
Applying \cref{second_main_theorem_about_sigma}, we obtain
\begin{align*}
\liminf_{N\to\infty}\frac1{|\Phi_N|} & \sum_{n\in\Phi_N}\sigma\big((X\times V)\cap \TT^{-n} (V \times X)\big)
\\
\geq
{}&
\liminf_{N\to\infty}\frac1{|\Phi_N|}\sum_{n\in\Phi_N}\int_{X^{k+1}}\big(\one\otimes G\big)\cdot\TT^n\big(G\otimes\one\big)\d\sigma
\\
=
{}&
\liminf_{N\to\infty}\frac1{|\Phi_N|}\sum_{n\in\Phi_N}\int_{X^{k+1}}\big(\one\otimes G\big)\cdot\TT^n\big(\one\otimes G\big)\d\sigma.
\end{align*}
Let $\nu\in\M(X^k)$ denote the projection of $\sigma$ to the last $k$ coordinates.
In view of \cref{lemma_sigmainvariance}, $\nu$
is invariant under $T\times T^2\times\cdots\times T^k$ and hence we can use \cref{cor_thm_weirdgeneralSzemeredi} to conclude that
the last expression is positive.
This shows that $\sigma$ is progressive and completes the proof.
\end{proof}

\subsection{An extension of Furstenberg's multiple recurrence theorem}
\label{subsec:furstenberg_extension}
We derive \cref{cor_thm_weirdgeneralSzemeredi} from the following more general result, which is an
extension of Furstenberg's multiple recurrence theorem that  may be of independent interest.

Given a vector $v=(v_1,\dots,v_k)\in\N^{k}$, we write  $T_v=T^{v_1}\times\cdots\times T^{v_k}$.

\begin{Theorem}
\label{thm_weirdgeneralSzemeredi}
    Let $(X,T)$ be a topological system, let $u,v\in\N^k$, and let $\Phi=(\Phi_N)_{N\in\N}$ be a \Folner{}-sequence in $\N$.
    If $\nu\in\M(X^k)$ is $T_v$-invariant and $A_1,\dots,A_k\subset X$ are such that $A=A_1\times\cdots\times A_k\subset X^k$ satisfies $\nu(A)>0$, then
    \begin{equation}
    \label{eqn_weirdmultirecurrence}
    \liminf_{N\to\infty}\frac1{|\Phi_N|}\sum_{n\in\Phi_N} \nu(A\cap T_u^{-n}A)>0.
    \end{equation}
\end{Theorem}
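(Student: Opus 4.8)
The plan is to reduce \cref{thm_weirdgeneralSzemeredi} to a genuine application of Furstenberg's multiple recurrence theorem by recognizing that the $\Z^k$-action generated by $T_u$ and $T_v$ on $X^k$, together with the invariant measure $\nu$, is exactly the setting where multiple recurrence of a product-set $A=A_1\times\cdots\times A_k$ can be detected. The key observation is that $\nu(A\cap T_u^{-n}A)$ is an average over a diagonal-type configuration, and the real content is a Poincaré-type recurrence statement along the sequence of powers of the single transformation $T_u$ acting on $(X^k,\nu)$ — but where the set $A$ is a box and $\nu$ need not be $T_u$-invariant, only $T_v$-invariant.

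First I would pass to an ergodic component. Since $\nu$ is $T_v$-invariant, disintegrate $\nu$ over the $\sigma$-algebra of $T_v$-invariant sets; at least one ergodic component $\nu'$ satisfies $\nu'(A)>0$, and it suffices to prove \eqref{eqn_weirdmultirecurrence} for $\nu'$ in place of $\nu$ (the $\liminf$ of a nonnegative-integrand average over components is controlled below by the average of the component $\liminf$'s via Fatou). So assume $(X^k,\nu,T_v)$ is ergodic. Next, observe that $A_i$ may be replaced by continuous functions: choose $g_i\in C(X)$ with $0\le g_i\le\one_{A_i}$ and $\int g_i\,dm>0$ where $m$ is a suitable marginal — actually more directly, choose $G=g_1\otimes\cdots\otimes g_k\le\one_A$ with $\int G\,d\nu>0$, which is possible by inner regularity of $\nu$ on the compact metric space $X^k$. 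Then $\nu(A\cap T_u^{-n}A)\ge\int G\cdot T_u^n G\,d\nu$, so it is enough to bound $\liminf_N\frac1{|\Phi_N|}\sum_{n\in\Phi_N}\int_{X^k}G\cdot T_u^n G\,d\nu$ from below by a positive quantity.

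Now I would invoke Furstenberg's multiple recurrence theorem in its measure-theoretic form for a single transformation. The pair $(X^k,\nu,T_v)$ is an ergodic measure preserving system, and $T_u$ is another measure preserving transformation of $(X^k,\nu)$ commuting with $T_v$ — but to apply the classical single-transformation multiple recurrence I want to realize $\int G\cdot T_u^n G\,d\nu$ as a polynomial or linear multiple recurrence average. The cleanest route: write $u=(u_1,\dots,u_k)$ and $v=(v_1,\dots,v_k)$; consider the transformation $R=T_v$ on $Y:=X^k$ and the commuting transformation $Q=T_u$. Since $\nu$ is $R$-invariant and $R,Q$ commute, Furstenberg–Katznelson-style multiple recurrence for commuting transformations (or, since we only need two sets $A$ and $T_u^{-n}A$, even plain Poincaré recurrence applied cleverly) is not quite enough because $\nu$ is not $Q$-invariant. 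The correct statement to use is the one in the excerpt's \cref{subsec:furstenberg_extension} heading — but since this \emph{is} that theorem, I instead reduce to the known case by a lifting trick: form the $T_v$-invariant measure and note that on the product system the orbit $\{T_u^n A\}$ stays within a single ergodic $T_v$-system. The decisive input is that Furstenberg's theorem (\cite[Theorem 11.13]{Furstenberg-1977}, as cited) gives, for an ergodic system $(W,\rho,S)$ and $\rho(B)>0$, that $\liminf_N\frac1{|\Phi_N|}\sum_{n\in\Phi_N}\rho(B\cap S^{-n}B\cap\cdots\cap S^{-(k-1)n}B)>0$. I would apply this with $W=X$, $\rho=$ an appropriate marginal/factor, and engineer the box structure of $A$ so that $\nu(A\cap T_u^{-n}A)$ dominates a Furstenberg average $\rho(B\cap S^{-n}B\cap\cdots)$ for suitable $S$ built from the entries $u_i/v_i$ — the homogeneity of the $T_v$-invariance is what lets the ratios $u_i$ relative to the "clock" $v_i$ play the role of the arithmetic progression $0,n,2n,\dots$.

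\textbf{Main obstacle.} The crux is handling the mismatch between the invariance direction $v$ and the recurrence direction $u$: because $\nu$ is only $T_v$-invariant, the average $\frac1{|\Phi_N|}\sum_n\int G\cdot T_u^n G\,d\nu$ is not obviously a standard multiple recurrence average, and one must exploit the product/box structure of $A$ to "straighten" the $T_u$-orbit into a progression governed by $T_v$. I expect the technical heart to be setting up the right auxiliary system — likely viewing $X^k$ with coordinates reindexed so that $T_v$ becomes the diagonal $T\times\cdots\times T$ (possible after a measure-theoretic identification when all $v_i$ are equal, and in general passing to a power/suspension) — after which the $T_u$ average becomes exactly the Furstenberg average $\int g\cdot T^{c_1 n}g\cdots T^{c_k n}g\,d\mu'$ for integers $c_i=u_i$, and positivity follows from the polynomial/linear Szemerédi theorem of Furstenberg (or Bergelson–Leibman for the non-proportional case). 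I would flag that if the $v_i$ are not all equal, one should first take the $\mathrm{lcm}$ and pass to that power of $T$, reducing to the equal case at the cost of replacing $\Phi_N$ by a sub-Følner sequence along residues, which is harmless for the $\liminf$.
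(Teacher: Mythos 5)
You correctly identify the crux---$\nu$ is invariant only under $T_v$ while the recurrence is along $T_u$, and the box structure of $A$ must be exploited to transfer one to the other---and you even name the right ingredients: take $c=\operatorname{LCM}(v_1,\dots,v_k)$, restrict the average to multiples of $c$, and finish with Furstenberg's multiple recurrence theorem. But the proposal never supplies the mechanism that performs the transfer, and the concrete substitutes you offer would not work. The missing step is a one-line identity: a cylinder set $X^{i-1}\times A_i\times X^{k-i}$ only sees the $i$-th coordinate, where $T_u^{-c}$ acts by $T^{-cu_i}=T^{-v_iw_i}$ with $w_i=cu_i/v_i\in\N$, so $T_u^{-c}(X^{i-1}\times A_i\times X^{k-i})=T_v^{-w_i}(X^{i-1}\times A_i\times X^{k-i})$, and hence $T_u^{-cn}A=\bigcap_{i=1}^k T_v^{-w_in}(X^{i-1}\times A_i\times X^{k-i})\supset\bigcap_{i=1}^k T_v^{-w_in}A$. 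After restricting to $n\equiv 0 \pmod c$ (cf.\ \cref{lemma_folnerdivision}), the average in~\eqref{eqn_weirdmultirecurrence} is bounded below by $\liminf_N\frac1{c|\Psi_N|}\sum_{n\in\Psi_N}\nu\bigl(A\cap\bigcap_{i=1}^kT_v^{-w_in}A\bigr)$ with $\Psi_N=\Phi_N/c$, and positivity is exactly Furstenberg's theorem~\cite{Furstenberg-1977} applied to the single system $(X^k,\nu,T_v)$ with the distinct linear exponents $w_in$; no ergodic decomposition, no commuting-transformations machinery, and no polynomial/Bergelson--Leibman input is needed.

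Two of your concrete suggestions are actually incorrect. Applying Furstenberg's theorem ``with $W=X$ and $\rho$ an appropriate marginal/factor'' cannot work: $\nu$ is not determined by its marginals, and $\nu(A\cap T_u^{-n}A)$ depends on the joint distribution of the coordinates, so the recurrence must be run on $(X^k,\nu,T_v)$ itself. Likewise, ``reindexing coordinates so that $T_v$ becomes the diagonal'' is not an available move; the lcm trick changes the exponents, not the coordinates, and what it buys is precisely the displayed identity above. Finally, the reduction to continuous $G=g_1\otimes\cdots\otimes g_k\le\one_A$ with $\int G\d\nu>0$ fails for general Borel $A_i$: a continuous $g_i$ with $0\le g_i\le\one_{A_i}$ has $\{g_i>0\}$ open and contained in $A_i$, hence vanishes identically when $A_i$ has empty interior, even if $A_i$ has positive measure. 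That reduction is also unnecessary---the argument can and should be run with the sets themselves, which is what the paper does.
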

When $u=v$, the conclusion of \cref{thm_weirdgeneralSzemeredi} follows quickly from the mean ergodic theorem.

The next example shows that in general one cannot drop the assumption that $A$ is a product set in \cref{thm_weirdgeneralSzemeredi}.

\begin{example}
    Let $T\colon\T\to\T$ be an irrational rotation $T\colon x\mapsto x+\alpha$ and let $\nu$ denote the Haar measure on the subtorus $H:=\{(x,2x):x\in\T\}\subset\T^2$.
    With $v=(1,2)$ and $u=(2,1)$ we have that $\nu$ is $T_v$-invariant, but $T_u^{-n}H=H+(0,3n\alpha)$ is disjoint from $H$ for every $n\in\N$. In particular~\eqref{eqn_weirdmultirecurrence} does not hold with $A=H$.
\end{example}

Consider the case that $(X,\mu,T)$ is a measure preserving system, $u=(1,\dots,k)$, $v=(1,\dots,1)$, and $\nu$ is the diagonal measure on $X^k$, meaning that $\nu$ is the push-forward of $\mu$ under the diagonal embedding $x\mapsto (x,x,\ldots,x)$ of $X$ into $X^k$.
Taking $A = B \times \cdots \times B$ in the theorem specializes~\eqref{eqn_weirdmultirecurrence} to
\[
\liminf_{N\to\infty}\frac1{|\Phi_N|}\sum_{n\in\Phi_N} \mu(B \cap T^{-n}B \cap \cdots\cap T^{-kn}B) >0
\]
which is Furstenberg's multiple recurrence theorem.  We do not, however, give a new proof of Furstenberg's theorem, as it is an ingredient in our proof of \cref{thm_weirdgeneralSzemeredi}.

When applying \cref{thm_weirdgeneralSzemeredi} to prove \cref{cor_thm_weirdgeneralSzemeredi}, we take $u=(1,\dots,1)$, $v=(1,\dots,k)$; this interchanges the roles of $u$ and $v$ in Furstenberg's  multiple recurrence.

\begin{proof}[Proof of \cref{thm_weirdgeneralSzemeredi}]
Define
$c=\operatorname{LCM}(v_1,\dots,v_k)$ to be the least common multiple of $v_1,\ldots,v_k$ and let $w_i=\frac{c\,u_i}{v_i}$, so that
\[
T_v^{-w_i}(X^{i-1}\times A_i\times X^{k-i})=T_u^{-c}(X^{i-1}\times A_i\times X^{k-i}).
\]
It follows that
\begin{align*}
    T_u^{-cn}A
    &=
    \bigcap_{i=1}^kT_u^{-cn}(X^{i-1}\times A_i\times X^{k-i})
    =
    \bigcap_{i=1}^kT_v^{-w_in}(X^{i-1}\times A_i\times X^{k-i})
    \\&\supset
    \bigcap_{i=i}^k T_v^{-w_in}A.
\end{align*}
It is clear that
\[
\liminf_{N\to\infty}\frac{1}{|\Phi_N|}\sum_{n\in\Phi_N} \nu(A\cap T^{-n}_uA)
\geq
\liminf_{N\to\infty}\frac{1}{|\Phi_N|} \sum_{\substack{n \in \Phi_N \\ c|n}}\nu(A\cap T^{-n}_uA)
\]
and so taking $\Psi_N=\Phi_N/c= \{n\in\N: cn\in\Phi_N\}$,
we have
\begin{align*}
\liminf_{N\to\infty}\frac1{|\Phi_N|}\sum_{n\in\Phi_N} \nu(A\cap T^{-n}_uA)
&\geq
\liminf_{N\to\infty}\frac1{c|\Psi_N|}\sum_{n\in\Psi_N}\nu(A\cap T^{-cn}_uA)
\\
&\geq
\liminf_{N\to\infty}\frac1{c|\Psi_N|}\sum_{n\in\Psi_N}\nu\left(A\cap\bigcap_{i=i}^k T_v^{-w_in}A\right)
\end{align*}
as in~\eqref{eq_longestlabelintheworld}.
Since $\nu$ is $T_v$-invariant, the last expression is positive by applying Furstenberg's multiple recurrence theorem to the system $(X^k,\nu,T_v)$.
\end{proof}

\begin{proof}[Proof of \cref{cor_thm_weirdgeneralSzemeredi}]
    Since $G\colon X^k\to[0,1]$ is continuous and has positive integral, there exist open sets $U_1,\dots,U_k$ and $c>0$ such that $c\cdot\one_{U_1}\otimes\cdots\otimes\one_{U_k}\leq G$ and $\nu(U_1\times\cdots\times U_k)>0$.
    Applying \cref{thm_weirdgeneralSzemeredi} with $u=(1,\dots,1)$, $v=(1,\dots,k)$, the conclusion follows.
\end{proof}

\subsection{Uniformity norms}
\label{subsec:uniformity_norms}

For the proof of \cref{sigma_coordinate_invariant} we make use of the structure theory of measure preserving transformations via uniformity seminorms~\cite{HK05, HK09}.
Roughly speaking, these seminorms capture the idea that functions orthogonal to the pronilfactor do not contribute to multiple ergodic averages.
We use this in the proof of \cref{second_main_theorem_about_sigma} to, roughly speaking, replace the function $G$ by a function that is measurable with respect to $Z_{k-1}^k$ without changing the $L^2(\sigma)$ norm. This reduces the proof of \cref{second_main_theorem_about_sigma} to analyzing the behavior in pronilsystems, which have additional structure.

\begin{Definition}[Uniformity Norms]
Given a measure preserving system $(X,\mu,T)$ and $s\geq 0$, the $s$-step uniformity seminorm $\| f \|_{U^s(X,\mu,T)}$ of a function $f \in L^\infty(\mu)$ is defined inductively as
\begin{gather}
\| f \|_{U^0(X,\mu,T)} = \int_X f\d\mu \\
\| f \|_{U^{s+1}(X,\mu,T)}^{2^{s+1}} = \lim_{H\to\infty}\frac{1}{H}\sum_{h=1}^H \| T^h f\cdot \overline{f} \|_{U^s(X,\mu,T)}^{2^s}. \label{eqn:semi_induc_def}
\end{gather}
The fact that the limit always exists and that $\| \cdot \|_{U^s(X,\mu,T)}$ defines a seminorm on $L^\infty(\mu)$ for $s\geq 1$ is proven in~\cite{HK05}.
\end{Definition}

The mean ergodic theorem gives
\[
\| f \|^2_{U^1(X,\mu,T)} = \| \E(f \mid \mathcal{I}) \|_2^2
\]
where $\mathcal{I}$ denotes the $\sigma$-algebra of invariant sets and so
\begin{equation}
\label{eqn:semi_1_def}
\| f \|_{U^1(X,\mu,T)} = \left| \int_X f \d \mu \right|,
\end{equation}
when the system $(X,\mu,T)$ is ergodic, in agreement with the standard definition.
The main result in~\cite{HK05} states that given an ergodic system $(X, \mu, T)$, for each $s\geq 1$, all functions $f \in L^\infty(\mu)$ satisfy
\begin{equation}\label{eq_OneLineStructureTheory}
  \| f \|_{U^{s+1}(X,\mu,T)} = 0 \iff \E(f\mid Z_s)=0,
\end{equation}
where $(Z_s,m_s,T)$ denotes the $s$-step pronilfactor of $(X,\mu,T)$.

\begin{Theorem}
\label{thm_sigma_prop_iv}
Let $(X,\mu,T)$ be ergodic, let $k \geq 2$, let $(\Phi_N)_{N \in \N}$ be a \Folner{} sequence, and let $\tau\in\M(X^{k+1})$ be invariant with respect to the transformation $\Id\times T\times T^2\times \dots \times T^k$.
Assume that the marginals $\tau_0,\tau_1,\dots,\tau_k$ of $\tau$ satisfy $\tau_i\leq i\mu$ for all $ 1 \le i \le k-1$.
Then for any $f_1, \dots, f_{k-1}\in L^\infty(\mu)$ and any bounded sequence $b\colon \N\to\C$, we have
\begin{equation}
\label{eq_thm_sigma_prop_iv}
\begin{aligned}
\limsup_{N \to \infty}
\Bigl\| \dfrac{1}{|\Phi_N|} \sum_{n \in \Phi_N} b(n) \cdot  (\one\otimes T^n f_1  &{} \otimes \dots \otimes T^n f_{k-1}\otimes \one)\Bigr\|_{L^2(\tau)}
\\
\leq {}&
C_k \cdot \|b\|_\infty\cdot \min \{ \|f_i\|_{U^k(X,\mu,T)} : 1 \le i \le k-1 \}
\end{aligned}
\end{equation}
where $C_k$ is a constant depending only on $k$.
\end{Theorem}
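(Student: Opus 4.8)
The plan is to prove the estimate by induction on $k$, after first normalizing and recasting it in a more flexible form. Since the left side of~\eqref{eq_thm_sigma_prop_iv} is linear in $b$ and (separately) in each $f_i$, while $\|\cdot\|_{U^k(X,\mu,T)}$ is homogeneous, I may assume $\|b\|_\infty\le1$ and $\|f_i\|_\infty\le1$ for all $i$, at the cost of letting $C_k$ depend on $k$ only. The tensor $\one\otimes T^nf_1\otimes\cdots\otimes T^nf_{k-1}\otimes\one$ depends only on coordinates $1,\dots,k-1$, so its $L^2(\tau)$ norm equals its $L^2(\tau')$ norm, where $\tau'$ is the marginal of $\tau$ on those coordinates; note $\tau'$ is invariant under $T\times T^2\times\cdots\times T^{k-1}$ and its $i$-th marginal is $\le i\mu$. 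Hence it is enough to prove the following claim, the theorem being the case $m=k-1$, $v=(1,2,\dots,k-1)$, $u=(1,\dots,1)$, $c_i=i$.

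\emph{Claim.} Let $m\ge1$, let $u,v\in\Z^m$ have all entries nonzero and the ratios $u_i/v_i$ pairwise distinct, let $\nu\in\M(X^m)$ be $T_v$-invariant with $i$-th marginal $\le c_i\mu$, let $\phi_1,\dots,\phi_m\in L^\infty(\mu)$ with $\|\phi_i\|_\infty\le1$, and let $b\colon\N\to\C$ with $\|b\|_\infty\le1$. Then
\[
\limsup_{N\to\infty}\Bigl\|\frac1{|\Phi_N|}\sum_{n\in\Phi_N}b(n)\,T^{u_1n}\phi_1\otimes\cdots\otimes T^{u_mn}\phi_m\Bigr\|_{L^2(\nu)}\ \le\ C\cdot\min_{1\le i\le m}\|\phi_i\|_{U^{m+1}(X,\mu,T)},
\]
where $C$ depends only on $m$, $u$, $v$, and $\max_ic_i$.

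For $m=1$ I would use $\|\cdot\|_{L^2(\nu)}\le\sqrt{c_1}\,\|\cdot\|_{L^2(\mu)}$, then the van der Corput inequality along $\Phi$ in $L^2(\mu)$; invariance of $\mu$ collapses $\langle T^{u_1(n+h)}\phi_1,T^{u_1n}\phi_1\rangle$ to $\int T^{u_1h}\phi_1\overline{\phi_1}\d\mu$, and the Cauchy--Schwarz inequality together with the elementary bound $\frac1H\sum_{h\le H}\bigl|\int T^{qh}\phi\bar\phi\d\mu\bigr|^2\le |q|\cdot\frac1{|q|H}\sum_{h\le |q|H}\bigl|\int T^{h}\phi\bar\phi\d\mu\bigr|^2\to |q|\,\|\phi\|_{U^2(X,\mu,T)}^4$ (using $\|\phi\|_{U^2(X,\mu,T)}=\|\phi\|_{U^2(X,\mu,T^{-1})}$ when $q<0$) finishes the base case. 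For the inductive step I fix the minimizing index $j$; relabelling coordinates I may assume $j\ne1$, and after splitting $\Phi_N$ into residue classes modulo a suitable $M$ and reparametrizing each class — which replaces $\Phi$ by a new \Folner{} sequence, $b$ by a bounded sequence, each $\phi_i$ by a shift of itself (leaving uniformity norms unchanged), and $u$ by $Mu$ — I may further assume $v_1\mid u_1$. Applying van der Corput in $L^2(\nu)$, the off-diagonal terms become $b(n+h)\overline{b(n)}\int\bigotimes_iT^{u_in}g_i^{(h)}\d\nu$ with $g_i^{(h)}:=T^{u_ih}\phi_i\cdot\overline{\phi_i}$, and using $T_v$-invariance of $\nu$ with the integer exponent $p=-u_1/v_1$ this integral equals $\int g_1^{(h)}\otimes\bigotimes_{i\ne1}T^{\tilde u_in}g_i^{(h)}\d\nu$, where $\tilde u_i:=u_i+v_ip\ne0$ (the distinctness of the ratios is exactly what forbids $\tilde u_i=0$). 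Bounding $|\int(\cdot)\d\nu|$ by the $L^2(\nu)$ norm of the finite \Cesaro{} average and factoring out the frozen first coordinate (whose sup norm is $\le1$) leaves precisely an average of the form in the Claim for the $m-1$ coordinates $i\ne1$, with the marginal $\nu'$ (which is $T_{v'}$-invariant with controlled marginals) and exponents $\tilde u_i$; the inductive hypothesis bounds it by $C_{m-1}\min_{i\ne1}\|g_i^{(h)}\|_{U^m(X,\mu,T)}\le C_{m-1}\|g_j^{(h)}\|_{U^m(X,\mu,T)}$. Feeding this back into van der Corput and using H\"older's inequality, the recursion $\|\phi_j\|_{U^{m+1}}^{2^{m+1}}=\lim_H\frac1H\sum_{h\le H}\|T^h\phi_j\bar\phi_j\|_{U^m}^{2^m}$, and once more the ``averaging along multiples'' inequality, bounds the whole quantity by a constant multiple of $\|\phi_j\|_{U^{m+1}(X,\mu,T)}$, closing the induction.

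I expect the main obstacle to be the arithmetic bookkeeping in the inductive step: only the single relation coming from $T_v$-invariance is available for ``freezing'' a coordinate, which forces the passage to arithmetic progressions and the corresponding tracking of how $u$, $v$, the $c_i$, and the uniformity directions evolve — all while keeping $C$ bounded purely in terms of $k$. The remaining ingredients — the van der Corput inequality along \Folner{} sequences, the permissible interchanges of iterated $\limsup$'s, and the invariance of uniformity norms under shifting $\phi_i$ and under replacing $T$ by $T^{-1}$ — are standard, though they require some care.
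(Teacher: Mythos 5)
Your proposal is correct in outline, but it takes a genuinely different route from the paper. The paper runs no induction: it first removes the weight $b(n)$ with a one-step Cauchy--Schwarz/tensor-square argument (\cref{lemma_removingb}), which transfers the problem to $\tau\times\tau$ and the functions $f_i\otimes\overline{f_i}$, and then invokes an unweighted estimate (\cref{thm_sigma_prop_V}) whose proof splits the average into residue classes modulo $\operatorname{LCM}(2,\dots,k)$ (\cref{lemma_folnerdivision}) so that it becomes an average of the form $S^{c_1n}g_1\cdots S^{c_kn}g_k$ for the single transformation $S=T\times T^2\times\cdots\times T^k$ on $(X^k,\nu)$; the analytic content is then quoted from the literature via \cref{lem_uniformity_seminorms_properties} (Host's seminorm inequality, part (iii), together with the transfer facts (i), (ii), (iv), part (i) giving the passage from $U^{k-1}$ of $f_i\otimes\overline{f_i}$ to $U^k$ of $f_i$). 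You instead prove a more general claim (arbitrary exponent vectors $u,v$ with nonzero entries and pairwise distinct ratios, the weight carried along throughout) by a self-contained van der Corput induction on the number of coordinates, using $T_v$-invariance to freeze one coordinate and the distinct-ratio condition to keep the remaining exponents nonzero; this essentially reproves the special case of the cited seminorm machinery that the paper uses as a black box. Your degree count matches (the weight costs one degree via van der Corput, exactly as the tensor-square trick does, and both land on $U^k$), the inductive data are stable (the marginal bounds and $T_{v'}$-invariance pass to the projected measure, and adding the constant $p=-u_1/v_1$ preserves distinctness of the ratios), and your residue-class rescaling is the same device as \cref{lemma_folnerdivision}. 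What your route buys is independence from the references and extra generality; what it costs is the arithmetic bookkeeping you yourself flag (rescalings, evolving exponents, constants depending only on $k$), which is plausible but not executed here. Two small points to tidy: the normalization ``by linearity and homogeneity'' is not literally available, since the stated inequality is not scale-invariant in the $f_i$ --- the correct reading, here as in the paper, is to assume $\|f_i\|_\infty\le1$ (all that the application requires); and when the minimizing index must avoid the frozen coordinate you need $m\ge2$ and a relabelling, which is legitimate because the claim is symmetric under simultaneous permutation of the coordinates, $u$, $v$, and the marginal constants.
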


For the proof of \cref{thm_sigma_prop_iv}, we make use of various properties of uniformity seminorms, which we collect in the following lemma.

\begin{Lemma}
\label{lem_uniformity_seminorms_properties}
Let $(X,\mu,T)$ be a measure preserving system.
\begin{enumerate}
[label=(\roman{enumi}),ref=(\roman{enumi}),leftmargin=*]
\item \label{itm_uniformity_seminorms_properties_i}
For all $k\geq 1$ and $f\in L^\infty(\mu)$, we have
\[
\|f\otimes \overline{f}\|_{U^k(X\times X,\mu \times \mu,T\times T)}\leq
\|f\|_{U^{k+1}(X,\mu,T)}^2.
\]
\item \label{itm_uniformity_seminorms_properties_ii}
Let $c\geq 1$ be an integer.
For all $f\in L^\infty(\mu)$ and all $k\geq 1$ we have that
\begin{equation*}
\|f\|_{U^k(X,\mu,T)}\leq \|f\|_{U^k(X,\mu,T^c)},
\end{equation*}
and for all $k\geq 2$ we have that
\begin{equation}
\label{eqn_prop_seminorms_for_powers_ii2}
\|f\|_{U^k(X,\mu,T^c)}\leq c^{\frac{k}{2^k}} \|f\|_{U^k(X,\mu,T)}.    \end{equation}
\item
\label{itm_uniformity_seminorms_properties_iii}
For any $k\geq 1$ and integers $c_1,\dots,c_k \ge 1$, there is a constant $C$, independent of the system, with the following property: for all $f_1,\dots,f_k\in L^\infty(\mu)$ with $\|f_i\|_{L^\infty(\mu)}\leq1$ and all \Folner{} sequences $(\Phi_N)_{N \in \N}$, we have
\[
\lim_{N\to\infty}\Bigl\| \dfrac{1}{|\Phi_N|} \sum_{n \in \Phi_N} T^{c_1n}f_1\cdots T^{c_k n}f_k \Bigr\|_{L^2(\mu)}
\le
C
\min \{\|f_i\|_{U^{k}(X,\mu, T)} : 1\leq i\leq k \}.
\]
\item
\label{itm_uniformity_seminorms_properties_iv}
Let $k\geq2$, $c\in\N$, and let $\rho\in\M(X)$ be invariant with respect to $T^c$ and satisfy
\begin{equation}\label{eq_prop_sigma_prop_iv_ambitious2_assumption_basecase_otherlemma_n}
    \rho\leq C\mu
\end{equation}
for some constant  $C > 0$.
Then for any $f\in L^\infty(\mu)$, we have
\begin{equation}\label{eq_prop_sigma_prop_iv_ambitious2_assumption_basecase_otherlemma_n2}
\|f\|_{U^k(X,\rho,T^{c})}
\leq C^{1/2^k}\cdot c^{\frac{k}{2^k}} \|f\|_{U^k(X,\mu,T)}.
\end{equation}
\end{enumerate}
\end{Lemma}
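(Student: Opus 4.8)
The plan is to establish the four items separately, noting first that the second inequality in (ii) is exactly the case $\rho=\mu$, $C=1$ of (iv); so it suffices to prove (i), (iii), the first inequality of (ii), and (iv), and I would work throughout directly from the inductive definition of the seminorms. For (i), I would induct on $k$. When $k=1$, the mean ergodic theorem gives $\|f\otimes\overline f\|_{U^{1}(X\times X,\mu\times\mu,T\times T)}^{2}=\lim_{H\to\infty}\tfrac1H\sum_{h=1}^{H}\bigl|\int_{X}T^{h}f\cdot\overline f\d\mu\bigr|^{2}$, and since $\bigl|\int_{X}T^{h}f\cdot\overline f\d\mu\bigr|^{2}\le\|\E(T^{h}f\cdot\overline f\mid\mathcal I)\|_{2}^{2}=\|T^{h}f\cdot\overline f\|_{U^{1}(X,\mu,T)}^{2}$ by Jensen's inequality, averaging over $h$ yields $\|f\|_{U^{2}(X,\mu,T)}^{4}$. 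For the inductive step one checks the pointwise identity $(T\times T)^{h}(f\otimes\overline f)\cdot\overline{f\otimes\overline f}=g_{h}\otimes\overline{g_{h}}$ with $g_{h}:=T^{h}f\cdot\overline f$; feeding this into the recursion defining $\|f\otimes\overline f\|_{U^{k+1}(X\times X,\mu\times\mu,T\times T)}$, bounding $\|g_{h}\otimes\overline{g_{h}}\|_{U^{k}(X\times X,\mu\times\mu,T\times T)}^{2^{k}}\le\|g_{h}\|_{U^{k+1}(X,\mu,T)}^{2^{k+1}}$ by the induction hypothesis, and invoking the recursion for $U^{k+2}(X,\mu,T)$ closes the induction after taking $2^{k+1}$-th roots.

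Item (iii) is the generalized von Neumann inequality; I would deduce it from the usual van der Corput / PET induction of Furstenberg--Host--Kra, which lowers the number of linear iterates one step at a time until a single $U^{k}$-seminorm remains (when some of the $c_{i}$ coincide, one first merges those factors into a product, using $\|\cdot\|_{U^{1}}\le\|\cdot\|_{U^{k}}$ and the Gowers--Cauchy--Schwarz inequality for products), with a constant depending only on $k$ and $c_{1},\dots,c_{k}$. The first inequality in (ii) is immediate for $k=1$, where it reads $\|\E(f\mid\mathcal I)\|_{2}\le\|\E(f\mid\mathcal I_{T^{c}})\|_{2}$ (here $\mathcal I_{T^{c}}$ is the $\sigma$-algebra of $T^{c}$-invariant sets) and holds because the $T$-invariant $\sigma$-algebra is contained in the $T^{c}$-invariant one; for $k\ge 2$ I would reduce to the ergodic case by the ergodic decomposition and then use that $Z_{k-1}(X,T)$, the $(k-1)$-step pronilfactor for $T$, is also a $(k-1)$-step pronilfactor for $T^{c}$, so that $\|f\|_{U^{k}(X,\mu,T)}$ depends only on the projection of $f$ to a subfactor of the one controlling $\|f\|_{U^{k}(X,\mu,T^{c})}$.

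The substance of the lemma is (iv), which I would prove through two reductions. \emph{Reduction A}, from $(\rho,T^{c})$ to $(\mu,T^{c})$, is an induction on $k\ge1$ using only the recursion (note $\mu$ is $T^{c}$-invariant since it is $T$-invariant): for $k=1$ the averages $\tfrac1N\sum_{n=1}^{N}T^{cn}g$ converge in $L^{2}(\rho)$ and in $L^{2}(\mu)$ to the \emph{same} function, because $\rho\le C\mu$ makes convergence along a subsequence $\mu$-a.e., hence $\rho$-a.e., so $\|g\|_{U^{1}(X,\rho,T^{c})}^{2}=\int|\E_{\mu}(g\mid\mathcal I_{T^{c}})|^{2}\d\rho\le C\,\|g\|_{U^{1}(X,\mu,T^{c})}^{2}$; the recursion then carries the constant through, with the constant at level $k$ at most $C$, giving $\|f\|_{U^{k}(X,\rho,T^{c})}^{2^{k}}\le C\,\|f\|_{U^{k}(X,\mu,T^{c})}^{2^{k}}$. \emph{Reduction B}, from $(\mu,T^{c})$ to $(\mu,T)$, is an induction on $k\ge2$: in the inductive step the recursion writes $\|f\|_{U^{k}(X,\mu,T^{c})}^{2^{k}}$ as a time average of $\|T^{ch}f\cdot\overline f\|_{U^{k-1}(X,\mu,T^{c})}^{2^{k-1}}$; applying the case $k-1$ of Reduction B to each inner seminorm costs a factor $c^{k-1}$, and then the elementary inequality $\limsup_{H}\tfrac1H\sum_{h\le H}a_{ch}\le c\lim_{M}\tfrac1M\sum_{m\le M}a_{m}$, valid for any nonnegative sequence $(a_{m})$ (keep only the terms indexed by multiples of $c$ in the larger average), costs one further factor $c$; the total $c^{k}$ gives the stated $c^{k/2^{k}}$ after taking $2^{k}$-th roots. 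Chaining Reductions A and B and using $C\ge1$ yields exactly $\|f\|_{U^{k}(X,\rho,T^{c})}\le C\,c^{k/2^{k}}\|f\|_{U^{k}(X,\mu,T)}$.

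I expect the one genuinely delicate step to be the base case $k=2$ of Reduction B, namely the \emph{sharp} inequality $\|f\|_{U^{2}(X,\mu,T^{c})}^{4}\le c^{2}\,\|f\|_{U^{2}(X,\mu,T)}^{4}$. Here one first reduces to $(X,\mu,T)$ ergodic via the ergodic decomposition — both seminorms decompose over it, since the $U^{1}$-seminorm is the $L^{2}$-norm of a conditional expectation and the recursion commutes with integration over the (coarser) $T$-invariant $\sigma$-algebra — and then examines the finitely many ergodic components of $T^{c}$, on each of which $T^{c}$ is ergodic with spectral measure the pushforward of the spectral measure of $f$ under $\theta\mapsto c\theta$. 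The worst case, $T^{c}$ trivial on a cyclic factor of order $c$, reduces to $\|f\|_{L^{4}}^{4}\le c^{2}\,\|\widehat f\|_{\ell^{4}}^{4}$ on $\Z/c\Z$, which is Hausdorff--Young followed by H\"older and is attained by a point mass, so the exponent $c^{2}$ cannot be improved. Making this spectral bookkeeping work uniformly over all ergodic $T$ and all $c$ while holding the constant at $c^{2}$ is the main obstacle; everything else is a routine unwinding of the recursive definition.
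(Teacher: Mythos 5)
Your plan is a self-contained alternative to what is, in the paper, essentially a citation proof: part (i) is quoted from~\cite[Eq.~(A.5)]{FH18}, part (ii) from~\cite[Lemma 3.1]{FrKu}, part (iii) from combining~\cite[Proposition 1]{Host} with~\cite[Lemma 3.1]{FrKu}, and only (iv) is argued directly, by exactly your Reduction A ($\|f\|_{U^k(X,\rho,T^c)}^{2^k}\leq C\|f\|_{U^k(X,\mu,T^c)}^{2^k}$) followed by~\eqref{eqn_prop_seminorms_for_powers_ii2}. A good portion of your outline is correct: the induction you give for (i) works; Reduction A is valid and coincides with the paper's own step; and your induction for Reduction B in the range $k\geq 3$ (inductive hypothesis on the inner seminorm, then restriction of the Ces\`aro average to multiples of $c$ at the cost of one factor of $c$) is sound and produces exactly the constant $c^{k/2^k}$.

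There are, however, genuine gaps. First, the base case $k=2$ of Reduction B --- which is the entire content of~\eqref{eqn_prop_seminorms_for_powers_ii2} at $k=2$, since no $U^1$ comparison is available --- is left as an admitted sketch; ``the worst case is a cyclic factor'' is an assertion, not an argument. The spectral route you indicate can be completed: for ergodic $T$ the algebra $\mathcal{I}_{T^c}$ is spanned by the at most $c$ normalized eigenfunctions $\chi$ whose eigenvalues are $c$-th roots of unity, so $\|f\|_{U^2(X,\mu,T^c)}^4=\sum_\chi\lim_{H}\frac1H\sum_{h\leq H}\bigl|\int_X T^{ch}f\cdot\overline{\chi f}\d\mu\bigr|^2$; applying the Wiener lemma to the push-forward of the spectral measure under $\theta\mapsto c\theta$ and Cauchy--Schwarz twice (over the at most $c$ preimages of each atom, and over the at most $c$ functions $\chi$) gives $c^2\|f\|_{U^2(X,\mu,T)}^4$, with the non-ergodic case handled by ergodic decomposition as you say --- but as written this key step is not proved. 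Second, your argument for the first inequality of (ii) when $k\geq 2$ does not work: the facts that $\|f\|_{U^k(X,\mu,T)}$ is determined by $\E(f\mid Z_{k-1})$ and that $Z_{k-1}(X,T)$ sits inside a pronilfactor for $T^c$ give no numerical comparison between the two seminorms; the standard proof instead uses that the $h$-averages in the inductive definition may be taken along an arbitrary \Folner{} sequence (in particular along $c\N$), together with induction from $\mathcal{I}_T\subset\mathcal{I}_{T^c}$ at the $U^1$ level. Third, your parenthetical treatment of repeated exponents in (iii) aims at a false statement: for $k=2$, $c_1=c_2=1$, a weakly mixing system and $f_1=f_2=\one_A-\one_{X\setminus A}$ with $\mu(A)=1/2$, the averages are identically $\one$ while $\|f_i\|_{U^2(X,\mu,T)}=0$, so no merging trick can succeed; the $c_i$ must be taken distinct (as they are in the paper's application of (iii)), and even then the van der Corput/PET route terminates in box seminorms along the directions $T^{c_i}$ and $T^{c_i-c_j}$, so it still requires the comparison with $\|\cdot\|_{U^k(X,\mu,T)}$ furnished by (ii)/(iv), which you should make explicit.
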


\begin{proof}
Part~\ref{itm_uniformity_seminorms_properties_i} is proved in the appendix of~\cite[Equation (A.5)]{FH18}.
Part~\ref{itm_uniformity_seminorms_properties_ii} is a special case of~\cite[Lemma 3.1]{FrKu}.
Part~\ref{itm_uniformity_seminorms_properties_iii} follows by combining~\cite[Proposition~1]{Host}
and~\cite[Lemma 3.1]{FrKu}.
Finally, for part~\ref{itm_uniformity_seminorms_properties_iv}, using~\eqref{eq_prop_sigma_prop_iv_ambitious2_assumption_basecase_otherlemma_n} and that $\rho$ is $T^c$-invariant, it follows quickly that
\[
\|f\|_{U^k(X,\rho,T^{c})}^{2^k}\leq C\|f\|_{U^k(X,\mu,T^{c})}^{2^k}
\]
using~\eqref{eqn:semi_1_def} for the case $k = 1$ and~\eqref{eqn:semi_induc_def} to prove the cases $k > 1$ by induction.
Thus~\eqref{eq_prop_sigma_prop_iv_ambitious2_assumption_basecase_otherlemma_n2} follows from~\eqref{eqn_prop_seminorms_for_powers_ii2}.
\end{proof}

We include a short lemma to clarify how to decompose averages over \Folner{} sequences along residue classes.
\begin{lemma}\label{lemma_folnerdivision}
    Let $a(n)$ be a bounded sequence taking values in a Banach space, let $\Phi = (\Phi_N)_{N\in\N}$ be a F\o lner sequence in $\N$, let $c\in\N$, and let $\Psi = (\Psi_N)_{N\in\N}$ be the F\o lner sequence defined by $\Psi_N:=\Phi_N/c$.
    Then
    \[
    \limsup_{N\to\infty}\left\|\frac{1}{|\Phi_N|}\sum_{n\in\Phi_N}a(n)\right\|
    \leq
    \frac{1}{c} \sum_{j=0}^{c-1}\limsup_{N\to\infty}\left\|\frac{1}{|\Psi_N|}\sum_{n\in\Psi_N}a(cn+j)\right\|.
    \]
\end{lemma}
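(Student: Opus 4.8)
The plan is to split the average over $\Phi_N$ according to residue classes modulo $c$, reconcile each resulting piece with an average over $\Psi_N$ using the F\o lner property, and then pass to the $\limsup$. The whole argument is bookkeeping; the only point requiring care is keeping track of the various cardinalities.

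First, for $0\le j\le c-1$ set $\Phi_N^{(j)}=\{m\in\Phi_N:m\equiv j\pmod c\}$, so that $\Phi_N$ is the disjoint union of the $\Phi_N^{(j)}$ and the triangle inequality gives
\[
\left\|\frac1{|\Phi_N|}\sum_{n\in\Phi_N}a(n)\right\|\le\sum_{j=0}^{c-1}\frac{|\Phi_N^{(j)}|}{|\Phi_N|}\left\|\frac1{|\Phi_N^{(j)}|}\sum_{m\in\Phi_N^{(j)}}a(m)\right\|,
\]
with a summand read as $0$ when $\Phi_N^{(j)}=\emptyset$. The bijection $m\mapsto(m-j)/c$ identifies $\Phi_N^{(j)}$ with $\{n\ge 0:cn+j\in\Phi_N\}$, so the inner average is $|\Phi_N^{(j)}|^{-1}\sum_{cn+j\in\Phi_N}a(cn+j)$.

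Next I would invoke the F\o lner property in the form $|(\Phi_N+i)\triangle\Phi_N|=o(|\Phi_N|)$ for each fixed $i\ge0$, which follows from the defining condition by telescoping. Applied with $i=j$ this shows that $\{n\ge0:cn+j\in\Phi_N\}$ and $\Psi_N=\{n\in\N:cn\in\Phi_N\}$ differ by a set of size $o(|\Phi_N|)$, so in particular $|\Phi_N^{(j)}|=|\Psi_N|+o(|\Phi_N|)$ for every $j$; summing over $j$ yields $|\Phi_N|=c|\Psi_N|+o(|\Phi_N|)$, hence $|\Phi_N^{(j)}|/|\Phi_N|\to1/c$ and $|\Psi_N|\to\infty$. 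Using that $a$ is bounded, say $\|a(n)\|\le M$ for all $n$, replacing the index set $\{n\ge0:cn+j\in\Phi_N\}$ by $\Psi_N$ changes the unnormalized sum by a vector of norm at most $M\cdot o(|\Phi_N|)$, and the normalizations $|\Phi_N^{(j)}|^{-1}$ and $|\Psi_N|^{-1}$ differ by a factor $1+o(1)$; together these give
\[
\left\|\frac1{|\Phi_N^{(j)}|}\sum_{m\in\Phi_N^{(j)}}a(m)-\frac1{|\Psi_N|}\sum_{n\in\Psi_N}a(cn+j)\right\|\longrightarrow 0\quad(N\to\infty).
\]

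Finally, substituting this into the displayed inequality and taking $\limsup_{N\to\infty}$, I would use subadditivity of $\limsup$ together with $|\Phi_N^{(j)}|/|\Phi_N|\to1/c$ and the fact that each sequence $\bigl\|\,|\Psi_N|^{-1}\sum_{n\in\Psi_N}a(cn+j)\bigr\|$ is bounded by $M$; this produces exactly the asserted bound. The main (and essentially only) obstacle is the middle step: one must make sure that $|\Phi_N^{(j)}|$, $|\Psi_N|$ and $|\Phi_N|/c$ are all genuinely comparable up to $o(|\Phi_N|)$ errors, with each comparison traced back to the single F\o lner condition on $\Phi$.
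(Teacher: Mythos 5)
Your proposal is correct and follows essentially the same route as the paper: both split $\Phi_N$ into residue classes modulo $c$, use the F\o lner property (shift-invariance up to $o(|\Phi_N|)$) to identify each class with $c\Psi_N+j$ and to get $|\Psi_N|/|\Phi_N|\to 1/c$, control the discrepancy via boundedness of $a$, and finish with the triangle inequality and subadditivity of the $\limsup$. Your write-up is just a more detailed bookkeeping of the same argument.
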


\begin{proof}
Since $\Phi$ is a \Folner{} sequence, for every $j\in\N$ we have
\[
\frac{1}{|\Phi_N|}\Big| (\Phi_N - j) \cap c\N \Big| \to \dfrac{1}{c}
\]
as $N\to\infty$.
Hence we have that
\begin{equation}
\label{eq_longestlabelintheworld}
\frac1{|\Phi_N|}\left|\Phi_N\cap \left(\bigcup_{j=0}^{c-1}c\Psi_N+j\right)\right|\to1\quad\text{ and }\quad \frac{|\Psi_N|}{|\Phi_N|}\to \frac1c
\end{equation}
as $N\to\infty$.
    It follows that as $N\to\infty$, we have that
    $$\frac1{|\Phi_N|}\left\|\sum_{n\in\Phi_N}a(n)\right\|
    -
    \frac1{|\Phi_N|}\left\|\sum_{j=0}^{c-1}\sum_{n\in\Psi_N}a(cn+j)\right\|\to0.$$
    The conclusion is obtained by using the triangle inequality.
\end{proof}

On the way to proving \cref{thm_sigma_prop_iv}, we first establish the following related result, corresponding roughly to the case $b(n)=1$ in \cref{thm_sigma_prop_iv}.

\begin{Theorem}
\label{thm_sigma_prop_V}
Let $(X,\mu,T)$ be a measure preserving system, $k \in \N$, and let $\nu\in\M(X^{k})$ be invariant under the transformation $(T\times T^2\times \dots \times T^k)$.
Assume that the marginals $\nu_i$ of $\nu$ satisfy
\begin{equation*}
\nu_i\leq i^2\mu
\end{equation*}
for all $ 1 \le i \le k$.
Then for all $f_1, \dots, f_{k}\in L^\infty(\mu)$ with $\|f_i\|_{L^\infty(\mu)}\leq1$, we have
\begin{equation}
\label{eq_thm_sigma_prop_V}
\limsup_{N \to \infty}
\Bigl\| \dfrac{1}{|\Phi_N|} \sum_{n \in \Phi_N}  (T^n f_1  \otimes \dots \otimes T^n f_{k})\Bigr\|_{L^2(\nu)}
\leq
D_k  \min \{ \|f_i\|_{U^k(X,\mu,T)} : 1 \le i \le k\}
\end{equation}
where $D_k$ is a constant depending only on $k$.
\end{Theorem}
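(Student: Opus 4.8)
The plan is to reduce the statement to an application of \cref{lem_uniformity_seminorms_properties}\ref{itm_uniformity_seminorms_properties_iii} via a van der Corput / Cauchy--Schwarz argument carried out on the product system $(X^k,\nu,T\times T^2\times\cdots\times T^k)$. Write $F=f_1\otimes\cdots\otimes f_k\in L^\infty(\nu)$ and $R=T\times T^2\times\cdots\times T^k$, so the left-hand average is $\frac1{|\Phi_N|}\sum_{n\in\Phi_N}R^nF$, where we have used that $T^n\otimes\cdots\otimes T^n$ acting after $\one\otimes\cdots$ is not quite $R^n$ — more precisely $\TT^n F = R^n(\text{something})$, so first I would rewrite $T^nf_1\otimes\cdots\otimes T^nf_k$ in terms of the diagonal transformation $S=T\times\cdots\times T$ and note $S^nF$ is what appears. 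The key point is that $\nu$ is $R$-invariant, \emph{not} $S$-invariant, so the mean ergodic theorem does not apply directly to $S^nF$; this mismatch between the averaging transformation $S$ and the invariant transformation $R$ is exactly the obstacle, and it is the same phenomenon exploited in \cref{thm_weirdgeneralSzemeredi}.

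First I would apply the van der Corput inequality (in the Hilbert space $L^2(\nu)$) to the sequence $n\mapsto S^nF$: this bounds $\limsup_N\|\frac1{|\Phi_N|}\sum_{n\in\Phi_N}S^nF\|_{L^2(\nu)}^2$ by an average over $h$ of $\limsup_N\frac1{|\Phi_N|}\sum_{n\in\Phi_N}\langle S^{n+h}F,S^nF\rangle_{L^2(\nu)}$. Since $\nu$ is $R$-invariant, I would then change variables inside the inner product using $R$ rather than $S$: writing $S^n = R^n\cdot(S^nR^{-n})$ and noting $S^nR^{-n} = \Id\times T^{-n}\times T^{-2n}\times\cdots\times T^{-(k-1)n}$, the inner product $\langle S^{n+h}F,S^nF\rangle_\nu$ becomes an integral of a product of functions of the form $T^{jn}(\text{fixed function depending on }h)$ over the marginals $\nu_i$. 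This is where the marginal bounds $\nu_i\le i^2\mu$ enter: each such integral is dominated (up to the constant $\prod i^2$) by an integral against $\mu$ of a product $T^{c_1 n}g_1\cdots T^{c_k n}g_k$ with distinct exponents $c_i$ and with $g_i$ built from $f_i$ and $T^h f_i$.

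Next I would feed this into \cref{lem_uniformity_seminorms_properties}\ref{itm_uniformity_seminorms_properties_iii}, which bounds the $L^2(\mu)$-norm of such an average by $C\min_i\|g_i\|_{U^k(X,\mu,T)}$; since $g_i$ is (up to complex conjugation) $f_i\cdot T^h\overline{f_i}$ or similar, parts~\ref{itm_uniformity_seminorms_properties_i} and~\ref{itm_uniformity_seminorms_properties_ii} let me control $\|g_i\|_{U^k}$ in terms of $\|f_i\|_{U^{k+1}}$ — but that loses a degree, so I must instead be slightly more careful: rather than van der Corput, I would iterate a Cauchy--Schwarz / successive-differencing scheme $k$ times (one step for each coordinate $f_1,\dots,f_k$, peeling them off one at a time), so that after all $k$ steps the remaining average is controlled by $\|f_i\|_{U^k}$ for whichever coordinate $i$ was singled out. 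Combining over all $i$ gives the minimum. The constant $D_k$ accumulates: the $k$ Cauchy--Schwarz steps, the $\prod_{i\le k}i^2$ from the marginal domination, and the constants from \cref{lem_uniformity_seminorms_properties}, all depending only on $k$.

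The main obstacle, as indicated, is handling the $S$-versus-$R$ discrepancy cleanly: one must repeatedly use $R$-invariance of $\nu$ to turn $S$-averages into honest multiple ergodic averages over $\mu$ with pairwise-distinct iterates, and at each differencing step keep careful track of which exponents appear (they must stay distinct and nonzero, using that $1,2,\dots,k$ are distinct) so that part~\ref{itm_uniformity_seminorms_properties_iii} applies with the genuine $U^k$ seminorm rather than a higher-degree one. A secondary bookkeeping issue is that after each Cauchy--Schwarz step the relevant measure is a self-joining of $\nu$ with itself over a sub-coordinate, whose marginals are still dominated by a constant multiple of $\mu$ by the hypothesis $\nu_i\le i^2\mu$; I would check this domination is preserved (with a worse constant) under the differencing, which is routine.
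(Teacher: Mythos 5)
There is a genuine gap at the central step of your argument. After van der Corput you need to control terms of the form $\langle S^{n+h}F,S^nF\rangle_{L^2(\nu)}=\int_{X^k}\prod_{i=1}^k\bigl(T^{n+h}f_i\cdot T^n\overline{f_i}\bigr)(x_i)\,\mathsf{d}\nu(x)$, and you assert that $R$-invariance of $\nu$ together with the marginal bounds $\nu_i\le i^2\mu$ turns this into ``an integral against $\mu$ of a product $T^{c_1n}g_1\cdots T^{c_kn}g_k$ with distinct exponents.'' That step is not valid: the hypothesis controls only the one-dimensional marginals of $\nu$, while the quantity above is a joint integral of a tensor product over the unknown joint distribution; a change of variables by $R^n$ merely shifts the exponents coordinatewise ($T^{n}\mapsto T^{(i+1)n}$ in coordinate $i$) and does not decouple the coordinates or replace $\nu$ by $\mu$. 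Your fallback --- iterating Cauchy--Schwarz $k$ times, ``peeling off'' coordinates so as to land on $U^k$ rather than $U^{k+1}$ --- is exactly where the real work would have to happen, and it is left unspecified; the same obstruction reappears at every step, since after each expansion the cross terms are again diagonal averages integrated against a joining about which you only know $R$-invariance and marginal domination.

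The paper's proof resolves the $S$-versus-$R$ mismatch differently and avoids van der Corput altogether. Using \cref{lemma_folnerdivision}, it restricts the average to the progressions $n\equiv j\pmod c$ with $c=\operatorname{LCM}(1,\dots,k)$; along such $n$ the diagonal shift $T^{cn+j}$ in coordinate $i$ equals $S^{c_in}$ applied to the single-coordinate function $g_i=(T^jf_i)\circ p_i$, where $S=T\times T^2\times\cdots\times T^k$ is the transformation preserving $\nu$ and $c_i=c/i$. Thus the whole average becomes an honest multiple ergodic average $\frac1{|\Psi_N|}\sum_n S^{c_1n}g_1\cdots S^{c_kn}g_k$ in the genuine measure preserving system $(X^k,\nu,S)$, to which \cref{lem_uniformity_seminorms_properties}\ref{itm_uniformity_seminorms_properties_iii} applies directly, yielding control by $\|g_i\|_{U^k(X^k,\nu,S)}=\|T^jf_i\|_{U^k(X,\nu_i,T^i)}$ with no degree loss. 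Only at this point do the marginal hypotheses enter: since $g_i$ depends on one coordinate, its seminorm involves only $\nu_i$, which is $T^i$-invariant and dominated by $i^2\mu$, so \cref{lem_uniformity_seminorms_properties}\ref{itm_uniformity_seminorms_properties_iv} transfers the bound to $\|f_i\|_{U^k(X,\mu,T)}$. In short, the marginal bounds are used to compare seminorms of single-coordinate functions, not to dominate joint integrals; without the LCM reduction (or some substitute making the diagonal average an average along powers of $S$), your scheme does not go through.
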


\begin{proof}
Let $c=\operatorname{LCM}(2,\dots,k)$ be the least common multiple of the first $k$ positive integers and set $c_i = c / i$ for all $1 \le i \le k$. Let $D_k$ be the constant in part (iii) of \cref{lem_uniformity_seminorms_properties}.
Set
\[
\Psi_N = \Phi_N/c = \{ n \in \N : cn \in \Phi_N \}
\]
which defines a \Folner{} sequence.
Then, in view of \cref{lemma_folnerdivision},
\begin{align*}
\limsup_{N \to \infty}
\Bigl\| \dfrac{1}{|\Phi_N|} & \sum_{n \in \Phi_N}  T^n f_1  \otimes \dots \otimes T^n f_{k} \Bigr\|_{L^2(\nu)}
    \\
    &\leq
    \frac{1}{c}\sum_{j=0}^{c-1}\left(
    \limsup_{N\to\infty}\Bigl\| \dfrac{1}{|\Psi_N|}  \sum_{n \in \Psi_N}  T^{cn+j} f_1  \otimes \dots \otimes T^{cn+j} f_{k} \Bigr\|_{L^2(\nu)}\right)
\end{align*}
and so it suffices to show
\begin{equation}
\label{eq_thm_sigma_prop_V3}
\limsup_{N \to \infty}
\Bigl\| \dfrac{1}{|\Psi_N|} \sum_{n \in \Psi_N}  T^{cn+j} f_1  \otimes \dots \otimes T^{cn+j} f_{k}\Bigr\|_{L^2(\nu)}
\le
D_k \min \{ \|f_i\|_{U^k(X,\mu,T)} : 1\leq i\leq k \}
\end{equation}
for each fixed integer $j\leq c$.
For each $1 \le i \le k$, define $g_i \colon  X^{k} \to \C$ by
\[
g_i(x_1,\dots,x_{k}) = (T^jf_i)(x_i).
\]
Set $S=T\times T^2\times\cdots\times T^k$ and note that
\[
S^{c_in}g_i=\one\otimes\cdots\otimes \one\otimes T^{cn+j}f_i\otimes\one\otimes\cdots\otimes\one
\]
for all $n \in \N$.
Thus we can rewrite~\eqref{eq_thm_sigma_prop_V3} as
\[
\limsup_{N \to \infty}
\Bigl\| \dfrac{1}{|\Psi_N|} \sum_{n \in \Psi_N}  S^{c_1n} g_1 \cdots S^{c_kn} g_k \Bigr\|_{L^2(\nu)}
\le
D_k \min \{ \|f_i\|_{U^k(X,\mu,T)} : 1\leq i \leq k \}.
\]
By part~\ref{itm_uniformity_seminorms_properties_iii} of \cref{lem_uniformity_seminorms_properties} applied to the system $(X^k, \nu, S)$ and our choice of $D_k$, we have that
\[
\limsup_{N \to \infty}
\Bigl\| \dfrac{1}{|\Psi_N|} \sum_{n \in \Psi_N}  S^{c_1n} g_1 \cdots S^{c_kn} g_k \Bigr\|_{L^2(\nu)}
\le
D_k \min \{ \|g_i\|_{U^k(X^k,\nu,S)} : 1\leq i \leq k \}.
\]
For $i=1,\dots,k$, we have
\[
\|g_i\|_{U^k(X^k,\nu,S)}=\|T^jf_i\|_{U^k(X,\nu_i,T^i)}
\]
and so the conclusion  follows by applying part~\ref{itm_uniformity_seminorms_properties_iv} of \cref{lem_uniformity_seminorms_properties}.
\end{proof}

\begin{lemma}
\label{lemma_removingb}
Let $(Y,\tau)$ be a probability space, $\Phi\subset\N$ be a finite set, $b\colon \Phi\to\C$, and $(v_n)_{n\in\Phi}$ be a collection of vectors in $L^2(\tau)$.
Then
\[
\left\|\dfrac{1}{|\Phi|} \sum_{n \in \Phi} b(n)v_n\right\|_{L^2(\tau)}^2
    \leq \|b\|_\infty^2\cdot\left\|\dfrac{1}{|\Phi|} \sum_{n \in \Phi}v_n\otimes\overline{v_n}\right\|_{L^2(\tau\times\tau)}.
\]
\end{lemma}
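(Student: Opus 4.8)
The plan is to use the Cauchy--Schwarz inequality together with the standard ``square and expand'' trick that converts a bound involving the unknown multipliers $b(n)$ into a bound involving only the tensor squares $v_n\otimes\overline{v_n}$. First I would write the left-hand side as a double sum: expanding the square in $L^2(\tau)$ gives
\[
\left\|\frac{1}{|\Phi|}\sum_{n\in\Phi}b(n)v_n\right\|_{L^2(\tau)}^2
=\frac{1}{|\Phi|^2}\sum_{n,m\in\Phi}b(n)\overline{b(m)}\,\langle v_n,v_m\rangle_{L^2(\tau)}.
\]
Taking absolute values and using $|b(n)\overline{b(m)}|\le\|b\|_\infty^2$ bounds this by
\[
\|b\|_\infty^2\cdot\frac{1}{|\Phi|^2}\sum_{n,m\in\Phi}\bigl|\langle v_n,v_m\rangle_{L^2(\tau)}\bigr|.
\]
This is not quite what we want, because summing the absolute values $|\langle v_n,v_m\rangle|$ is too lossy; instead I would keep the signs and insert a second multiplier.

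The cleaner route is to note that for \emph{any} unimodular (or just bounded-by-one) choice of signs $\epsilon\colon\Phi\to\C$ one has, by Cauchy--Schwarz in the index variable $n$,
\[
\left|\frac{1}{|\Phi|}\sum_{n\in\Phi}b(n)\langle v_n, w\rangle\right|
\le \|b\|_\infty\left(\frac{1}{|\Phi|}\sum_{n\in\Phi}|\langle v_n,w\rangle|^2\right)^{1/2}\|\cdot\|\text{-normalizations},
\]
but rather than chase this I would take the most direct approach: apply Cauchy--Schwarz to the double sum $\frac{1}{|\Phi|^2}\sum_{n,m}b(n)\overline{b(m)}\langle v_n,v_m\rangle$ viewed as $\langle A, B\rangle$ where $A$ has entries $b(n)\overline{b(m)}$ and $B$ has entries $\langle v_n,v_m\rangle$. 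Concretely, bound $\bigl|\sum_{n,m}b(n)\overline{b(m)}\langle v_n,v_m\rangle\bigr|\le\|b\|_\infty^2\sum_{n,m}|\langle v_n,v_m\rangle|$ and then observe that $\sum_{n,m}|\langle v_n,v_m\rangle|$ compares, via Cauchy--Schwarz $\sum_{n,m}|\langle v_n,v_m\rangle|\le|\Phi|\bigl(\sum_{n,m}|\langle v_n,v_m\rangle|^2\bigr)^{1/2}$, to $|\Phi|\cdot|\Phi|\bigl\|\frac{1}{|\Phi|^2}\sum_{n,m}|\langle v_n,v_m\rangle|^2\text{-normalized}\bigr\|$. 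The key algebraic identity making this work is
\[
\sum_{n,m\in\Phi}|\langle v_n,v_m\rangle_{L^2(\tau)}|^2
=\sum_{n,m\in\Phi}\langle v_n\otimes\overline{v_n},\,v_m\otimes\overline{v_m}\rangle_{L^2(\tau\times\tau)}
=\Bigl\|\sum_{n\in\Phi}v_n\otimes\overline{v_n}\Bigr\|_{L^2(\tau\times\tau)}^2,
\]
since $\langle v_n,v_m\rangle\,\overline{\langle v_n,v_m\rangle}=\langle v_n,v_m\rangle\langle\overline{v_n},\overline{v_m}\rangle$ factors as the $L^2(\tau\times\tau)$ inner product of the tensor squares. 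Combining the chain of inequalities gives exactly
\[
\left\|\frac{1}{|\Phi|}\sum_{n\in\Phi}b(n)v_n\right\|_{L^2(\tau)}^2
\le\|b\|_\infty^2\left\|\frac{1}{|\Phi|}\sum_{n\in\Phi}v_n\otimes\overline{v_n}\right\|_{L^2(\tau\times\tau)}.
\]

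I expect the only delicate point to be getting the powers of $|\Phi|$ and the single Cauchy--Schwarz step arranged so that the final exponent on the tensor-square norm is $1$ rather than $2$ — this is why the estimate is stated with an $L^2$ norm on the left but only its first power of the $L^2(\tau\times\tau)$ norm on the right. The trick is to apply Cauchy--Schwarz only once (pairing the constant-$1$ vector against the sum $\sum_n b(n)\overline{b(n)}\,\langle\,\cdot\,\rangle$-type expression, equivalently pairing $\sum_n v_n\otimes\overline{v_n}$ against itself through the positive-semidefiniteness of the Gram matrix), so that one square root is extracted and the remaining factor is linear in $\|\sum_n v_n\otimes\overline{v_n}\|_{L^2(\tau\times\tau)}$. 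Everything else is a routine expansion of norms into sums of inner products, so the proof is short.
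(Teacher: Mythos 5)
Your proposal is correct and follows essentially the same route as the paper's proof: expand the square, bound $b(n)\overline{b(m)}$ by $\|b\|_\infty^2$, apply Cauchy--Schwarz once to the double sum of $\bigl|\int_Y v_n\overline{v_m}\,\mathsf{d}\tau\bigr|$, and identify $\sum_{n,m}\bigl|\int_Y v_n\overline{v_m}\,\mathsf{d}\tau\bigr|^2$ with $\bigl\|\sum_n v_n\otimes\overline{v_n}\bigr\|_{L^2(\tau\times\tau)}^2$. The mid-proof detour where you call the absolute-value bound ``too lossy'' is a harmless false start, since the chain you ultimately carry out is exactly the paper's argument and yields the stated inequality with the correct (first) power of the tensor norm.
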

\begin{proof}
We expand the square and obtain    \begin{equation}\label{eq_lemmathatseemstobevdcbutisnot}
\left\|\dfrac{1}{|\Phi|} \sum_{n \in \Phi}b(n)v_n\right\|_{L^2(\tau)}^2
        =     \dfrac{1}{|\Phi|^2} \sum_{n,m  \in \Phi}b(n)\overline{b(m)}\int_Yv_n\overline{v_m}\d\tau
        \leq
    \|b\|_\infty^2\cdot\dfrac{1}{|\Phi|^2} \sum_{n,m \in \Phi}\left|\int_Yv_n\overline{v_m}\d\tau\right|.
    \end{equation}
  Using the Cauchy-Schwarz inequality, we  estimate
    \begin{eqnarray*}
\left(\dfrac{1}{|\Phi|^2} \sum_{n,m \in \Phi}\left|\int_Yv_n\overline{v_m}\d\tau\right|\right)^2
    &\leq&
\dfrac{1}{|\Phi|^2} \sum_{n,m \in \Phi}
\left|\int_Yv_n\overline{v_m}\d\tau\right|^2 \\
\    &  = &
   \dfrac{1}{|\Phi|^2} \sum_{n,m \in \Phi}\int_{Y\times Y}\big(v_n\otimes\overline{v_n}\big)\cdot\big(\overline{v_m}\otimes v_m\big)\d(\tau\times\tau)
    \\&=&
    \int_{Y\times Y}\left|\dfrac{1}{|\Phi|} \sum_{n \in \Phi}v_n\otimes\overline{v_n}\right|^2\d(\tau\times\tau)
    =
    \left\|\dfrac{1}{|\Phi|} \sum_{n \in \Phi}v_n\otimes\overline{v_n}\right\|_{L^2(\tau\times\tau)}^2.
    \end{eqnarray*}
  Combining this  with~\eqref{eq_lemmathatseemstobevdcbutisnot}, we obtain the conclusion.
\end{proof}

\begin{proof}[Proof of \cref{thm_sigma_prop_iv}]
Let $\nu\in\M((X\times X)^{k-1})$ be the pushforward of $\tau\times\tau$ under the map
$$\varphi\colon (x_0,\dots,x_k,y_0,\dots,y_k)\mapsto(x_1,y_1,x_2,y_2,\dots,x_{k-1},y_{k-1}).$$
Note that $\nu$ is invariant under $(T\times T)\times (T^2\times T^2)\times\dots\times(T^{k-1}\times T^{k-1})$ and that each of the $k-1$ marginals $\nu_i\in\M(X\times X)$ of $\nu$ are $\nu_i=\tau_i\times\tau_i$ and hence satisfy $\nu_i\leq i^2(\mu\times\mu)$ for $i=1,\dots,k-1$.
Applying \cref{thm_sigma_prop_V} to the system $(X \times X, \mu \times \mu, T \times T)$ with $k-1$ instead of $k$ and $f_i \otimes \overline{f_i}$ instead of $f_i$, it follows that
\begin{equation*}
\begin{aligned}
\limsup_{N \to \infty}
\Bigl\| \dfrac{1}{|\Phi_N|} \sum_{n \in \Phi_N} (T\times T)^n (f_1\otimes \overline{f_1})  &{} \otimes \dots \otimes (T\times T)^n (f_{k-1}\otimes\overline{ f_{k-1}})\Bigr\|_{L^2(\nu)}
\\
\leq {}&
D_{k-1} \cdot \min \Big\{ \big\|f_i\otimes\overline{f_i}\big\|_{U^{k-1}(X\times X,\mu\times\mu,T\times T)} : 1 \le i \le k-1 \Big\}.
\end{aligned}
\end{equation*}
Part~\ref{itm_uniformity_seminorms_properties_i} of \cref{lem_uniformity_seminorms_properties} implies that
\[
\|f_i\otimes\overline{f_i}\|_{U^{k-1}(X\times X,\mu\times\mu,T\times T)}\leq \|f_i\|_{U^k(X,\mu\,T)}^2
\]
and so we deduce that
\begin{equation}
\label{eq_thm_sigma_prop_iv_7}
\begin{aligned}
\limsup_{N \to \infty}
\Bigl\| \dfrac{1}{|\Phi_N|} \sum_{n \in \Phi_N} (T\times T)^n (f_1\otimes \overline{f_1})  &{} \otimes \dots \otimes (T\times T)^n (f_{k-1}\otimes\overline{ f_{k-1}})\Bigr\|_{L^2(\nu)}
\\
\leq {}&
D_{k-1} \cdot \min \big\{\|f_i\|_{U^k(X,\mu\,T)}^2  : 1 \le i \le k-1 \big\}.
\end{aligned}
\end{equation}
Setting
\[
v_n=\one\otimes T^n f_1 \otimes \dots \otimes T^n f_{k-1}\otimes \one\in L^2(\tau),
\]
we note that
\[
v_n\otimes\overline{v_n} = \left( (T\times T)^n (f_1\otimes \overline{f_1}) \otimes \dots \otimes (T\times T)^n (f_{k-1}\otimes\overline{ f_{k-1}}) \right) \circ\varphi.
\]
Since $\nu=\varphi_*(\tau \times \tau) $, combining \cref{lemma_removingb} (applied to each $\Phi=\Phi_N$) with~\eqref{eq_thm_sigma_prop_iv_7}, it follows that
\begin{equation*}
\limsup_{N \to \infty}\left\| \dfrac{1}{|\Phi_N|} \sum_{n \in \Phi_N} b(n) \cdot  v_n\right\|_{L^2(\tau)}^2
\leq {}
\|b\|_\infty^2\cdot D_{k-1} \cdot \min \big\{ \|f_i\|_{U^k(X,\mu\,T)}^2 : 1 \le i \le k-1 \big\}.
\end{equation*}
Taking square roots on both sides, we obtain the desired conclusion with $C_k=\sqrt{D_{k-1}}$.
\end{proof}

\subsection{Proof of \cref{sigma_coordinate_invariant}}
\label{sec_sigma_coordinate_invariant}

For the proof of \cref{second_main_theorem_about_sigma} we need one more lemma.

\begin{lemma}
\label{lemma_correlationorthogonality2}
Let $(X,\mu,T)$ be an ergodic system, let $a\in\gen(\mu,\Phi)$ for some \Folner{} sequence $\Phi$, and denote by $(Z_{k-1}, m_{k-1},T)$ the $(k-1)$-step pronilfactor of $(X,\mu,T)$.
Assume $(Y,S)$ is a $(k-1)$-step pronilsystem.
Then for every $g\in C(X)$, $y\in Y$, and $F\in C(Y)$,
\begin{equation}
\label{eq_lemma_correlationorthogonality}
\limsup_{N\to\infty}\left|\frac1{|\Phi_N|}\sum_{n\in\Phi_N}g(T^na)F(S^ny)\right|
\leq
\big\|\E(g\mid Z_{k-1})\big\|_{L^1(m_{k-1})} \cdot\|F\|_{\infty}.
\end{equation}
\end{lemma}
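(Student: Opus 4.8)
The plan is to transfer the estimate to a joining of $(X,\mu,T)$ with a sub-pronilsystem of $(Y,S)$, realize the $\limsup$ as an integral against that joining, and then control the integral by conditioning onto the $Y$-coordinate and discarding the part of $g$ orthogonal to $Z_{k-1}$.

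\textbf{Step 1: setup.} First I would replace $Y$ by the orbit closure $Y':=\overline{\{S^ny:n\in\N\}}$, which is again a $(k-1)$-step pronilsystem and, being an orbit closure in a pronilsystem, is uniquely ergodic with (Haar) invariant measure $m_{Y'}$ (see~\cite{lesigne-91,leibman-05}); this only replaces $F$ by its restriction, not increasing $\|F\|_\infty$, and leaves $\E(g\mid Z_{k-1})$ unchanged. Next, pick a subsequence $(N_j)$ along which $\bigl|\frac{1}{|\Phi_{N_j}|}\sum_{n\in\Phi_{N_j}}g(T^na)F(S^ny)\bigr|$ converges to the left-hand side of~\eqref{eq_lemma_correlationorthogonality} and, after passing to a further subsequence, along which the empirical measures $\frac{1}{|\Phi_{N_j}|}\sum_{n\in\Phi_{N_j}}\delta_{(T^na,S^ny)}$ converge weak$^*$ to some $\theta\in\M(X\times Y')$. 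A standard \Folner{} argument shows $\theta$ is $T\times S$-invariant; since $a\in\gen(\mu,\Phi)$ its first marginal is $\mu$, and since $(Y',S)$ is uniquely ergodic its second marginal is $m_{Y'}$. Writing $\mathcal{C}$ for the sub-$\sigma$-algebra of $X\times Y'$ generated by the second coordinate and $\tilde g:=\E_\theta(g\otimes\one\mid\mathcal{C})\in L^1(m_{Y'})$, we obtain
\[
\limsup_{N\to\infty}\Bigl|\frac{1}{|\Phi_N|}\sum_{n\in\Phi_N}g(T^na)F(S^ny)\Bigr|
=\Bigl|\int_{X\times Y'}g\otimes F\d\theta\Bigr|
=\Bigl|\int_{Y'}\tilde g\cdot F\d m_{Y'}\Bigr|
\le\|\tilde g\|_{L^1(m_{Y'})}\,\|F\|_\infty ,
\]
so it suffices to prove $\|\tilde g\|_{L^1(m_{Y'})}\le\|\E(g\mid Z_{k-1})\|_{L^1(m_{k-1})}$.

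\textbf{Step 2: killing the orthogonal part (the crux).} Write $g=g_1+g'$, where $g_1\in L^\infty(\mu)$ is (the pullback of) $\E(g\mid Z_{k-1})$ and $g':=g-g_1$; then $\E(g'\mid Z_{k-1})=0$, hence $\|g'\|_{U^k(X,\mu,T)}=0$ by~\eqref{eq_OneLineStructureTheory}. I claim $\E_\theta(g'\otimes\one\mid\mathcal{C})=0$. Decompose $\theta$ into $T\times S$-ergodic components, $\theta=\int\theta_\omega\d\omega$. Each $(X\times Y',\theta_\omega,T\times S)$ is ergodic, so its first-coordinate factor $(X,\mu_\omega,T)$ is ergodic; since $\int\mu_\omega\d\omega=\mu$ and $\mu$ is ergodic, uniqueness of the ergodic decomposition forces $\mu_\omega=\mu$ for a.e.\ $\omega$. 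For such $\omega$, a short induction on $s\ge1$ gives
\[
\|h\otimes\one\|_{U^s(X\times Y',\theta_\omega,T\times S)}=\|h\|_{U^s(X,\mu,T)}\qquad\text{for all }h\in L^\infty(\mu):
\]
the base case $s=1$ holds because both systems are ergodic, so both sides equal $\bigl|\int h\d\mu\bigr|$; the inductive step follows since $(T\times S)^j(h\otimes\one)\cdot\overline{h\otimes\one}=(T^jh\cdot\overline h)\otimes\one$, which reduces the defining $U^{s+1}$-average for $h\otimes\one$ to $U^s$-averages of functions of the same product form. In particular $\|g'\otimes\one\|_{U^k(X\times Y',\theta_\omega,T\times S)}=0$, so~\eqref{eq_OneLineStructureTheory} applied to the ergodic system $(X\times Y',\theta_\omega,T\times S)$ yields $\E_{\theta_\omega}(g'\otimes\one\mid Z_{k-1}(X\times Y',\theta_\omega))=0$. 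The second marginal of $\theta_\omega$ is $m_{Y'}$, so the $\mathcal{C}$-factor of $(X\times Y',\theta_\omega)$ is a factor isomorphic to the $(k-1)$-step pronilsystem $(Y',m_{Y'},S)$, whence $\mathcal{C}\subseteq Z_{k-1}(X\times Y',\theta_\omega)$ and therefore $\E_{\theta_\omega}(g'\otimes\one\mid\mathcal{C})=0$. Integrating over $\omega$ gives $\int g'\otimes F\d\theta=0$ for every $F\in L^\infty(m_{Y'})$, i.e.\ $\E_\theta(g'\otimes\one\mid\mathcal{C})=0$.

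\textbf{Step 3: conclusion.} By linearity $\tilde g=\E_\theta(g_1\otimes\one\mid\mathcal{C})$, so using that conditional expectation is an $L^1$-contraction and that the first marginal of $\theta$ is $\mu$,
\[
\|\tilde g\|_{L^1(m_{Y'})}\le\|g_1\otimes\one\|_{L^1(\theta)}=\|g_1\|_{L^1(\mu)}=\|\E(g\mid Z_{k-1})\|_{L^1(m_{k-1})},
\]
the last equality because $(\pi_{k-1})_*\mu=m_{k-1}$. Combined with Step 1 this gives~\eqref{eq_lemma_correlationorthogonality}. I expect Step 2 to be the main obstacle: one must promote the hypothesis $\|g'\|_{U^k(X,\mu,T)}=0$ to the joining $\theta$, and the efficient route is to pass to ergodic components, exploit that ergodicity of $\mu$ pins their $X$-marginals to $\mu$, and compute the uniformity seminorm of $g'\otimes\one$ in each component directly.
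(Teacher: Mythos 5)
Your proof is correct and follows essentially the same route as the paper: pass to a subsequence along which $(a,y)$ generates a joining, realize the $\limsup$ as an integral against that joining, kill the part of $g$ orthogonal to $Z_{k-1}$ by observing that its uniformity seminorm vanishes in the joint system while the $Y$-coordinate is measurable with respect to the joint system's $(k-1)$-step pronilfactor, and bound the remaining structured part in $L^1$ using that the first marginal is $\mu$. The only real difference is that you decompose the joining into ergodic components (pinning the marginals via ergodicity of $\mu$ and unique ergodicity of the orbit closure $Y'$) so that the characterization~\eqref{eq_OneLineStructureTheory} is applied only to ergodic systems — a point the paper's proof handles implicitly.
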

We stress that~\eqref{eq_lemma_correlationorthogonality} requires the function $g$ to be continuous, as we could otherwise modify it on the orbit of $a$ without changing the right-hand side.
\begin{proof}
Pick an increasing sequence $(N_j)_{j\in\N}$ of natural numbers such that
\[
\limsup_{N\to\infty}  \left|\frac1{|\Phi_N|}\sum_{n\in\Phi_N} g(T^n a)F(S^ny)\right|
=
\lim_{j\to\infty} \left|\frac1{|\Phi_{N_j}|}\sum_{n\in\Phi_{N_j}} g(T^n a)F(S^ny)\right|.
\]
By refining $(N_j)_{j\in\N}$ if necessary, we can assume that $(a,y)$ is generic along $(\Phi_{N_j})_{j\in\N}$ for a measure $\rho$ on $X\times Y$ with respect to $T\times S$.

Since $(Y,S)$ is a pronilsystem, there is an invariant measure $\nu$ on $Y$ for which $y \in \gen(\nu,\Phi)$.
Replacing $Y$ with the support of $\nu$ we may assume that $(Y,\nu,S)$ is ergodic.
Since $a\in\gen(\mu,\Phi)$ we have that the first marginal of $\rho$ is $\mu$ and the second marginal is $\nu$.
Letting $\tilde{g}=\E\bigl(g \mid Z_{k-1})\circ\pi_{k-1}$, we obtain
\begin{align*}
\lim_{j\to\infty} &\left|\frac1{|\Phi_{N_j}|}\sum_{n\in\Phi_{N_j}} g(T^n a)F(S^ny)\right|\\
&~=\,
\left|\int_{X\times Y} g\otimes F \d\rho\right|
\leq
\left|\int_{X\times Y} (g-\tilde{g})\otimes F \d\rho\right|+\left|\int_{X\times Y} \tilde{g}\otimes F \d\rho\right|.
\end{align*}
Note that
\[
\bigl\|(g-\tilde{g})\otimes\one\bigr\|_{U^k(X\times Y,\rho,T\times S)}
=
\bigl\|g-\tilde{g}\bigr\|_{U^k(X,\mu,T)} =0.
\]
Thus $(g - \tilde{g}) \otimes \one$ is orthogonal to the maximal $(k-1)$-step pronilfactor of $(X \times Y, \zeta, T \times S)$ for almost every ergodic component $\zeta$ of $\rho$ by \cite[Proposition 18, Chapter 8]{HK-book}.
On the other hand, since $(Y,\nu,S)$ is an ergodic  $(k-1)$-step nilsystem it is a factor of $(X \times Y, \zeta, T \times S)$.
Thus $(Y,\nu,S)$ is a factor of the maximal $(k-1)$-step pronilfactor of $(X \times Y, \zeta, T \times S)$.
Therefore, as $\one \times F$ is measurable with respect to the latter factor,
the functions $(g-\tilde{g})\otimes\one$ and $\one\otimes F$ are orthogonal in $L^2(\zeta)$ for almost every ergodic component $\zeta$ of $\rho$, and hence
\[
\int_{X\times Y} (g-\tilde{g})\otimes F \d\rho=0.
\]
We are left with
\begin{align*}
\limsup_{N\to\infty}  \left|\frac1{|\Phi_N|}\sum_{n\in\Phi_N} g(T^n a)F(S^ny)\right|&\leq \left|\int_{X\times Y} \tilde{g}\otimes F \d\rho\right|
\\
&\leq
\|F\|_\infty\cdot \int_{X} \left|\tilde{g}\right| \d\mu =\|F\|_\infty\cdot \int_{Z_{k-1}} \left|\E(g\mid Z_{k-1})\right| \d m_{k-1},
\end{align*}
and the claim follows.
\end{proof}

\begin{proof}[Proof of \cref{second_main_theorem_about_sigma}]
By a standard approximation argument, it suffices to show that for all  $f_1,\dots,f_k\in C(X)$ with $\|f_i\|_\infty\leq1$,
\begin{equation}
\label{eqn_prf_2ndmt_s_1}
\lim_{N\to\infty}\biggl\|\frac1{|\Phi_N|}\sum_{n\in\Phi_N}\TT^n (f_1\otimes\ldots\otimes f_k\otimes\one) \,-\, \frac1{|\Phi_N|}\sum_{n\in\Phi_N} \TT^n(\one\otimes f_1\otimes\ldots\otimes f_k)\biggr\|_{L^2(\sigma)}=0.
\end{equation}
To ease the notation,
we write $g_i=\E(f_i\mid Z_{k-1})\circ\pi_{k-1}$ for $i=1,\ldots,k$.
Note that for every $n\in\N$
\begin{align*}
\TT^n (f_1\otimes\ldots\otimes f_k\otimes\one)
=
f_1(T^na)\cdot \TT^n(\one\otimes f_2\otimes \ldots\otimes f_k\otimes\one)
\end{align*}
holds $\sigma$-almost everywhere.
Using~\eqref{eq_OneLineStructureTheory}, we have that $\|f_i-g_i\|_{U^k(X,\mu,T)}=0$ for each $i$.
In view of \cref{lem_sigma_marginals}, $\sigma$ satisfies the assumptions of \cref{thm_sigma_prop_iv}, which now implies that
\[
\begin{split}
\lim_{N\to\infty}\biggl\|\frac1{|\Phi_N|}\sum_{n\in\Phi_N} & f_1(T^na)\cdot\TT^n(\one\otimes f_2\otimes \ldots\otimes f_k\otimes\one)
\\
&\,-\, \frac1{|\Phi_N|}\sum_{n\in\Phi_N} f_1(T^na)\cdot \TT^n(\one\otimes g_2\otimes \ldots\otimes g_k\otimes\one)\biggr\|_{L^2(\sigma)}=0.
\end{split}
\]
On the other hand, \cref{thm_sigma_prop_V} implies
\[
\lim_{N\to\infty}\biggl\|\frac1{|\Phi_N|}\sum_{n\in\Phi_N} \TT^n(\one\otimes f_1\otimes\ldots\otimes f_k)  \,-\, \frac1{|\Phi_N|}\sum_{n\in\Phi_N} \TT^n(\one\otimes g_1\otimes\ldots\otimes g_k)
 \biggr\|_{L^2(\sigma)}=0.
\]
This shows that~\eqref{eqn_prf_2ndmt_s_1} is equivalent to
\begin{equation}
\label{eqn_prf_2ndmt_s_2}
\begin{split}
\lim_{N\to\infty}\biggl\|\frac1{|\Phi_N|}\sum_{n\in\Phi_N} & f_1(T^na)\cdot \TT^n(\one\otimes g_2\otimes \ldots\otimes g_k\otimes\one)
\\
&\,-\, \frac1{|\Phi_N|}\sum_{n\in\Phi_N} \TT^n(\one\otimes g_1\otimes \ldots\otimes g_k)\biggr\|_{L^2(\sigma)}=0.
\end{split}
\end{equation}
Using the definition of $\sigma$, we can rewrite~\eqref{eqn_prf_2ndmt_s_2} as
\begin{equation}
\label{eqn_prf_2ndmt_s_3}
\begin{split}
\lim_{N\to\infty}\biggl\|\frac1{|\Phi_N|}\sum_{n\in\Phi_N} & f_1(T^na)\cdot \big(T^n \E(f_2\mid Z_{k-1})\otimes \ldots\otimes T^n \E(f_k\mid Z_{k-1})\otimes\one\big)
\\
&\,-\, \frac1{|\Phi_N|}\sum_{n\in\Phi_N} \big(T^n \E(f_1\mid Z_{k-1})\otimes \ldots\otimes T^n \E(f_k\mid Z_{k-1})\big)\biggr\|_{L^2(\xi)}=0,
\end{split}
\end{equation}
where $\xi$ is as in~\eqref{def:xi}.
Fix $1 > \epsilon>0$ and choose  $h_1,\dots,h_k\in C(Z_{k-1})$ such that
\begin{equation}
\label{eq_proof_thm_sigma_progressive1}
\|\E(f_i\mid Z_{k-1})-h_i\|_{L^2(m_{k-1})}<\epsilon.
\end{equation}
As the functions $f_1,\dots,f_k$ are uniformly bounded by $1$, we can assume that the functions $h_1,\dots,h_k$ are as well.

To prove~\eqref{eqn_prf_2ndmt_s_3}, we  establish the following three statements:
\begin{align}
\label{eqn_prf_2ndmt_s_5}
\begin{split}
\limsup_{N\to\infty}\biggl\| \frac1{|\Phi_N|}\sum_{n\in\Phi_N} & \big(T^n \E(f_1\mid Z_{k-1})\otimes \ldots\otimes T^n \E(f_k\mid Z_{k-1})\big)
\\
&\,-\, \frac1{|\Phi_N|}\sum_{n\in\Phi_N} \big(T^n h_1\otimes \ldots\otimes T^n h_k\big) \biggr\|_{L^2(\xi)}\leq k^2\epsilon,
\end{split}
\end{align}
\begin{align}
\label{eqn_prf_2ndmt_s_4}
\begin{split}
\limsup_{N\to\infty}\biggl\| \frac1{|\Phi_N|}\sum_{n\in\Phi_N} & f_1(T^na)\cdot \big(T^n \E(f_2\mid Z_{k-1})\otimes \ldots\otimes T^n \E(f_k\mid Z_{k-1})\otimes\one\big)
\\
&\,-\, \frac1{|\Phi_N|}\sum_{n\in\Phi_N} h_1(\pi_{k-1}(T^n a))\cdot \big(T^n h_2\otimes \ldots\otimes T^n h_k\otimes\one\big) \biggr\|_{L^2(\xi)}\leq k^2\epsilon+\sqrt{\epsilon},
\end{split}
\end{align}
\begin{align}
\label{eqn_prf_2ndmt_s_6}
\begin{split}
\lim_{N\to\infty}\biggl\|\frac1{|\Phi_N|}\sum_{n\in\Phi_N} h_1(\pi_{k-1}(T^n a))\cdot & (T^n h_2\otimes\ldots\otimes T^n h_k\otimes\one) \\
&\,-\, \frac1{|\Phi_N|}\sum_{n\in\Phi_N} ( T^n h_1\otimes\ldots\otimes T^n h_k)\biggr\|_{L^2(\xi)}=0.
\end{split}
\end{align}
Since $\epsilon$ can be taken arbitrarily small, \eqref{eqn_prf_2ndmt_s_3} follows by combining~\eqref{eqn_prf_2ndmt_s_5}, \eqref{eqn_prf_2ndmt_s_4},  and~\eqref{eqn_prf_2ndmt_s_6} and using the triangle inequality.

To prove~\eqref{eqn_prf_2ndmt_s_5} we first combine~\eqref{eq_proof_thm_sigma_progressive1} with the fact that $m_{k-1}$ is $T$-invariant and~\eqref{eqn_ioxi} to deduce that for every $n\in\N$,
\begin{equation*}
\|T^n\E(f_i\mid Z_{k-1})-T^nh_i\|_{L^2(\xi_i)}< \epsilon \sqrt{i},
\end{equation*}
where $\xi_i$ is the $i$-th marginal of $\xi$.
This, in turn, implies that
\begin{equation*}
\label{eq_proof_thm_sigma_progressive3}
\big\|\big(T^n\E(f_1\mid Z_{k-1})\otimes\ldots\otimes T^n\E(f_k\mid Z_{k-1})\big) -\big(T^nh_1\otimes\ldots\otimes T^nh_k\big)\big\|_{L^2(\xi)}\leq k^2\epsilon
\end{equation*}
by estimating coordinate-wise, and hence~\eqref{eqn_prf_2ndmt_s_5} follows by averaging.

To prove~\eqref{eqn_prf_2ndmt_s_4} we first note that, by the same argument, we have
\begin{equation}
\label{eq_proof_thm_sigma_progressive3.2}
\Big\|\big(T^n\E(f_2\mid Z_{k-1})\otimes\ldots\otimes T^n\E(f_k\mid Z_{k-1})\otimes\one\big) -\big(T^nh_2\otimes\ldots\otimes T^nh_k\otimes\one\big)\Big\|_{L^2(\xi)}\leq k^2\epsilon.
\end{equation}
Defining $g(x)=f_1(x)-h_1(\pi_{k-1}(x))$, we then have that
\[
f_1(T^na)-h_1(\pi_{k-1}(T^n a))=g(T^n a).
\]
Since $\pi_{k-1}$ is continuous, the function $g$ is continuous.
Multiplying~\eqref{eq_proof_thm_sigma_progressive3.2} by $f_1(T^na)$ and using the triangle inequality, we reduce~\eqref{eqn_prf_2ndmt_s_4} to
\begin{align}
\label{eqn_prf_2ndmt_s_7}
\begin{split}
\limsup_{N\to\infty}\biggl\| \frac1{|\Phi_N|}\sum_{n\in\Phi_N} & g(T^n a)\cdot \big(T^n h_2\otimes \cdots\otimes T^n h_k\otimes\one\big)
 \biggr\|_{L^2(\xi)}\leq \sqrt{\epsilon}.
\end{split}
\end{align}
To establish~\eqref{eqn_prf_2ndmt_s_7} we use \cref{lemma_correlationorthogonality2}.
Defining $Y=Z_{k-1}^{k}$, $S=T\times\cdots \times T$, and $F(x_1,\ldots,x_k)=h_2(x_1)\cdots h_k(x_{k-1})$, we then have
\begin{align*}
\limsup_{N\to\infty}\biggl\| \frac1{|\Phi_N|}\sum_{n\in\Phi_N} & g(T^n a)\cdot \big(T^n h_2\otimes \cdots\otimes T^n h_k\otimes\one\big)
 \biggr\|_{L^2(\xi)}^2
\\
&\leq \limsup_{N\to\infty}\biggl\| \frac1{|\Phi_N|}\sum_{n\in\Phi_N} g(T^n a)\cdot \big(T^n h_2\otimes \cdots\otimes T^n h_k\otimes\one\big)
 \biggr\|_{L^1(\xi)}
\\
&\leq \int_Y \limsup_{N\to\infty}\bigg| \frac1{|\Phi_N|}\sum_{n\in\Phi_N} g(T^n a)\cdot F(S^ny) \bigg|d\xi(y)
\\
&\leq \sup_{y\in Y}\limsup_{N\to\infty} \bigg| \frac1{|\Phi_N|}\sum_{n\in\Phi_N} g(T^n a)\cdot F(S^ny)\bigg|.
\end{align*}
Since $\pi_{k-1}$ is continuous, the function $g$ is continuous and therefore we can apply \cref{lemma_correlationorthogonality2} to conclude that
\begin{multline*}
\limsup_{N\to\infty} \left|\frac1{|\Phi_N|}\sum_{n\in\Phi_N} \big(f_1(T^na)-h_1(\pi_{k-1}(T^n a))\big)\cdot F(S^ny)
\right|
\\  \leq \big\|\E(g\mid Z_{k-1})\big\|_{L^1(m_{k-1})} \cdot\|F\|_{\infty}.
\leq \big\|\E(f_1\mid Z_{k-1})-h_1\big\|_{L^2(m_{k-1})}\cdot\|F\|_{\infty}
\leq \epsilon,
\end{multline*}
and the estimate in~\eqref{eqn_prf_2ndmt_s_7} follows.

To finish the proof, we are left with verifying~\eqref{eqn_prf_2ndmt_s_6}. For this we use the definition~\eqref{def:xi} of $\xi$ and the fact that orbit closures in pronilsystems are uniquely ergodic pronilsystems.
We have
\begin{align*}
\lim_{N\to\infty} & \biggl\|\frac1{|\Phi_N|}\sum_{n\in\Phi_N} h_1\bigl(\pi_{k-1}(T^n a)\bigr)\cdot (T^n h_2\otimes\dots\otimes T^n h_k\otimes\one)
\,-\, \frac1{|\Phi_N|}\sum_{n\in\Phi_N} ( T^n h_1\otimes\dots\otimes T^n h_k)\biggr\|_{L^2(\xi)}^2
\\
&=\lim_{N\to\infty}\int_{Z_{k-1}^k} \bigg| \frac1{|\Phi_N|}\sum_{n\in\Phi_N} h_1\bigl(\pi_{k-1}(T^n a)\bigr)\cdot (T^n h_2\otimes\dots\otimes T^n h_k\otimes\one)
\\
&\qquad\qquad\qquad\qquad\qquad\qquad\qquad- \frac1{|\Phi_N|}\sum_{n\in\Phi_N}( T^n h_1\otimes\dots\otimes T^n h_k) \bigg|^2\d\xi
\\
\\
&=\lim_{N\to\infty}\lim_{M\to\infty}\frac1{|\Phi_M|}\sum_{m\in\Phi_M} \bigg| \frac1{|\Phi_N|}\sum_{n\in\Phi_N} h_1\bigl(T^n(\pi_{k-1} a)\bigr)h_2\bigl(T^{n+m}(\pi_{k-1} a)\bigr) \cdots h_k\bigl(T^{n+(k-1)m}(\pi_{k-1} a)\bigr)
\\
&\qquad\qquad\qquad\qquad\qquad\qquad- \frac1{|\Phi_N|}\sum_{n\in\Phi_N} h_1\bigl(T^{n+m}(\pi_{k-1} a)\bigr) \cdots h_k\bigl(T^{n+km}(\pi_{k-1} a)\bigr) \bigg|^2.
\end{align*}
Since $Z_{k-1}$ is a pronilsystem, arguing as at the end of the proof of \cref{sigma_diagonal_average_furstenberg} we can interchange the order of the limits, and this last expression becomes
\begin{align*}
\lim_{M\to\infty}\lim_{N\to\infty} & \frac1{|\Phi_M|}\sum_{m\in\Phi_M} \bigg| \frac1{|\Phi_N|}\sum_{n\in\Phi_N} h_1\bigl(T^n(\pi_{k-1} a)\bigr) h_2\bigl(T^{n+m}(\pi_{k-1} a)\bigr) \dots h_k\bigl(T^{n+(k-1)m}(\pi_{k-1} a)\bigr)
\\
&\qquad- \frac1{|\Phi_N|}\sum_{n\in\Phi_N} h_1\bigl(T^{n+m}(\pi_{k-1} a)\bigr)
h_1\bigl(T^{n+2m}(\pi_{k-1} a)\bigr)
\cdots h_k\bigl(T^{n+km}(\pi_{k-1} a)\bigr) \bigg|^2.
\end{align*}
Making the change of variables $n\mapsto n-m$ in the second term shows that the two limits are identical, concluding the proof.
\end{proof}

\bigskip
\footnotesize
\noindent
Bryna Kra \\
\textsc{Northwestern University}\par\nopagebreak
\noindent
\href{mailto:kra@math.northwestern.edu}
{\texttt{kra@math.northwestern.edu}}

\bigskip
\footnotesize
\noindent
Joel Moreira\\
\textsc{University of Warwick} \par\nopagebreak
\noindent
\href{mailto:joel.moreira@warwick.ac.uk}
{\texttt{joel.moreira@warwick.ac.uk}}

\bigskip
\footnotesize
\noindent
Florian K.\ Richter\\
\textsc{{\'E}cole Polytechnique F{\'e}d{\'e}rale de Lausanne} (EPFL)\par\nopagebreak
\noindent
\href{mailto:f.richter@epfl.ch}
{\texttt{f.richter@epfl.ch}}

\bigskip
\footnotesize
\noindent
Donald Robertson\\
\textsc{University of Manchester} \par\nopagebreak
\noindent
\href{mailto:donald.robertson@manchester.ac.uk}
{\texttt{donald.robertson@manchester.ac.uk}}

\let\thefootnote\relax\footnotetext{For the purpose of open access, the authors have applied  a Creative Commons Attribution (CC BY) license to any Author Accepted Manuscript version arising from this submission.}

\end{document}